\documentclass[twoside,11pt]{article}
\usepackage{jmlr2e_arxiv}
\usepackage{amsmath}
\usepackage{amsfonts}
\usepackage{amssymb}
\usepackage{algorithm,algorithmic}
\usepackage{subfig}
\usepackage{multirow}
\usepackage{color}
\usepackage{pslatex}
\usepackage{pgf}
\usepackage{url}
\usepackage{tikz}
\usepackage{textcomp}
\usetikzlibrary{arrows,shapes,snakes,automata,backgrounds,petri}
\graphicspath{{./images/}}

\def\x{{\mathbf x}}
\def\z{{\mathbf z}}
\def\1{{\mathbf 1}}

\def\v{{\mathbf v}}
\def\X{{\mathbf X}}

\def\Cmat{{\mathbf C}}
\def\Rmat{{\mathbf R}}

\def\gammab{{\boldsymbol\gamma}}

\def\varepsilonb{{\boldsymbol\varepsilon}}

\def\y{{\mathbf y}}
\def\w{{\mathbf w}}
\def\tildew{{\mathbf{\tilde{w}}}}

\def\Y{{\mathbf Y}}

\def\Q{{\mathbf Q}}

\def\W{{\mathbf W}}

\def\N{{\mathcal N}}

\def\C{{\mathcal C}}
\def\GG{{\mathcal G}}

\def\L{{\mathcal L}}

\def\H{{\mathcal H}}

\def\e{{\mathbf e}}

\def\u{{\mathbf u}}

\def\Real{{\mathbb R}}

\def\U{{\mathbf U}}
\def\u{{\mathbf u}}
\def\A{{\mathbf A}}

\def\V{{\mathbf V}}

\def\I{{\mathbf I}}

\def\argmin{\operatornamewithlimits{arg\,min}}

\def\supp{\operatorname{Supp}}
\def\sign{\operatorname{sign}}
\def\st{~~\text{s.t.}~~}
\def\defin{\triangleq}

\newcommand{\R}[1]{\mathbb{R}^{#1}}
\newcommand{\RR}[2]{\mathbb{R}^{#1 \times #2}}
\newcommand{\G}{\mathcal{G}}

\newcommand{\NormDeux}[1]{\left\|#1\right\|_2}
\newcommand{\NormInf}[1]{\left\|#1\right\|_{\infty}}
\newcommand{\NormFro}[1]{\left\|#1\right\|_{\text{F}}}

\newcommand{\IntSet}[1]{ \{1,\ldots, #1\}}%{\text{\textlbrackdbl} #1 \text{\textrbrackdbl}}
\newcommand{\InSet}[1]{ \{1,\ldots, #1\}}%{\text{\textlbrackdbl} #1 \text{\textrbrackdbl}}

\def \xib{{\boldsymbol\xi}}
\def \xibbar{{\boldsymbol{\overline{\xi}}}}

\def \kappab {{\boldsymbol\kappa}}

\long\def\symbolfootnote[#1]#2{\begingroup\def\thefootnote{\fnsymbol{footnote}}\footnote[#1]{#2}\endgroup} 
\renewcommand{\cite}{\citep}

% Heading arguments are {volume}{year}{pages}{submitted}{published}{author-full-names}

%\jmlrheading{12}{2011}{-}{04/11; Revised 8/11}{-}{Julien Mairal, Rodolphe Jenatton, Guillaume Obozinski and Francis Bach}

% Short headings should be running head and authors last names

\ShortHeadings{Convex and Network Flow Optimization for Structured Sparsity}{Mairal, Jenatton, Obozinski and Bach}
\firstpageno{1}

\begin{document}

\title{Convex and Network Flow Optimization for Structured Sparsity}

\author{\name Julien Mairal\thanks{These authors contributed equally.}~\hspace*{0.08cm}\thanks{When most of this work was conducted, all authors were affiliated to INRIA, WILLOW Project-Team.} \email julien@stat.berkeley.edu \\
        \addr Department of Statistics \\
University of California\\
Berkeley, CA 94720-1776, USA \\
   \name Rodolphe Jenatton$^*$$^\dagger$ \email rodolphe.jenatton@inria.fr \\
        \name Guillaume Obozinski$^\dagger$ \email guillaume.obozinski@inria.fr \\
        \name Francis Bach$^\dagger$ \email francis.bach@inria.fr \\
        \addr INRIA - SIERRA Project-Team\\
         Laboratoire d'Informatique de l'Ecole Normale Sup\'erieure (INRIA/ENS/CNRS UMR 8548)\\
         23, avenue d'Italie 75214 Paris CEDEX 13, France.
       }

%\editor{Hui Zou}

\maketitle

\vspace*{0.2cm}
\begin{abstract}%   <- trailing '%' for backward compatibility of .sty file
We consider a class of learning problems regularized by a structured
sparsity-inducing norm defined as the sum of $\ell_2$- or $\ell_\infty$-norms
over groups of variables. Whereas much effort has been put in developing
fast optimization techniques when the groups are disjoint or embedded in a
hierarchy, we address here the case of general overlapping groups.  To this
end, we present two different strategies: On the one hand, we show that the
proximal operator associated with a sum of $\ell_\infty$-norms can be computed
exactly in polynomial time by solving a \emph{quadratic min-cost flow problem},
allowing the use of accelerated proximal gradient methods.  On the other hand,
we use proximal splitting techniques, and address an equivalent formulation
with non-overlapping groups, but in higher dimension and with
additional constraints.  We propose efficient and scalable algorithms exploiting these two
strategies, which are significantly
faster than alternative approaches.  We illustrate these
methods with several problems such as CUR matrix factorization, multi-task
learning of tree-structured dictionaries, background subtraction in video
sequences, image denoising with wavelets, and topographic dictionary learning
of natural image patches.
\end{abstract}

\begin{keywords}
Convex optimization, proximal methods, sparse coding, structured sparsity, matrix factorization, network flow optimization, alternating direction method of multipliers.
\end{keywords}
\vspace*{0.2cm}

\section{Introduction}
Sparse linear models have become a popular framework for dealing with various
unsupervised and supervised tasks in machine learning and signal processing.
In such models, linear combinations of small sets of variables are selected to
describe the data.  Regularization by the $\ell_1$-norm has emerged as a
powerful tool for addressing this variable selection problem,
relying on both a well-developed theory~\citep[see][and references
therein]{tibshirani,chen,mallat,tsybakov,wainwright} and efficient
algorithms~\citep{efron,nesterov,beck,needell,combette}.

The $\ell_1$-norm primarily encourages sparse solutions, regardless of the
potential structural relationships (e.g., spatial, temporal or hierarchical)
existing between the variables.  Much effort has recently been devoted to
designing sparsity-inducing regularizations capable of encoding higher-order
information about the patterns of non-zero
coefficients~\citep{cehver,jenatton,jacob,zhao,he,huang,baraniuk,Micchelli2010}, with successful
applications in bioinformatics~\citep{jacob,kim3}, topic
modeling~\citep{jenatton3,jenatton4} and computer vision~\citep{cehver,huang, Jenatton2010}.  By
considering sums of norms of appropriate subsets, or \textit{groups}, of
variables, these regularizations control the sparsity patterns of the
solutions.  The underlying optimization is usually difficult, in part because
it involves nonsmooth components. 

Our first strategy uses proximal gradient methods, which have proven
to be effective in this context, essentially because of their fast convergence
rates and their ability to deal with large problems~\cite{nesterov,beck}.  They
can handle differentiable loss functions with Lipschitz-continuous gradient, and we
show in this paper how to use them with a regularization term composed of 
a sum of $\ell_\infty$-norms. 
The second strategy we consider exploits proximal splitting methods~\citep[see][and references
therein]{combettes2,combette,goldfarb,tomioka,qin,boyd2}, which builds upon an equivalent
formulation with non-overlapping groups, but in a higher dimensional space and with additional
constraints.\footnote{The idea of using this class of algorithms for solving structured sparse
problems was first suggested to us by Jean-Christophe
Pesquet and Patrick-Louis Combettes. It was also suggested to us later by Ryota
Tomioka, who briefly mentioned this possibility in \citep{tomioka}. It can also
briefly be found in~\citep{boyd2}, and in details in the work of~\citet{qin} which 
was conducted as the same time as ours. It was also used in a related context by~\citet{sprechmann} for
solving optimization problems with hierarchical norms.
} 
More precisely, we make four main contributions:
\begin{itemize}
\item We show that the \emph{proximal operator} associated with the sum of $\ell_\infty$-norms
with overlapping groups can be computed efficiently and exactly by solving a \emph{quadratic min-cost flow}
problem, thereby establishing a connection with the network flow optimization
literature.\footnote{Interestingly, this is not the first time that network
flow optimization tools have been used to solve sparse regularized problems
with proximal methods.  Such a connection was recently established by
\citet{chambolle} in the context of total variation regularization, and similarly by~\citet{Hoefling} for the fused Lasso. One can also find the use of maximum flow problems for non-convex penalties in the work of~\citet{cehver} which combines Markov random fields and sparsity.} This is the main contribution of the paper, which allows us to 
use proximal gradient methods in the context of structured sparsity.

\item We prove that the dual norm of the sum of $\ell_\infty$-norms can
also be evaluated efficiently, which enables us to compute duality gaps for the
corresponding optimization problems.

\item We present proximal splitting methods for solving 
structured sparse regularized problems.

\item We demonstrate that our methods are relevant for various applications
whose practical success is made possible by our algorithmic tools and efficient implementations.
First, we introduce a new CUR matrix factorization technique exploiting structured sparse regularization, 
built upon the links drawn by~\citet{bien} between CUR decomposition~\citep{mahoney2009cur} and sparse regularization.
Then, we illustrate our algorithms with different tasks: video background subtraction, estimation of hierarchical structures for
dictionary learning of natural image patches~\citep{jenatton3,jenatton4}, wavelet image denoising with a structured sparse prior, and topographic dictionary learning of natural image patches~\citep{hyvarinen2,kavukcuoglu2,garrigues}.
\end{itemize}
Note that this paper extends a shorter version published in Advances in Neural Information Processing Systems~\citep{mairal10}, by adding new experiments (CUR matrix factorization, wavelet image denoising and topographic dictionary learning), presenting the proximal splitting methods, providing the full proofs of the optimization results, and adding numerous discussions.
\subsection{Notation}
Vectors are denoted by bold lower case letters and matrices by upper case ones.
We define for $q \geq 1$ the \mbox{$\ell_q$-norm} of a vector~$\x$ in~$\Real^m$ as
$\|\x\|_q \defin (\sum_{i=1}^m |\x_i|^q)^{{1}/{q}}$, where~$\x_i$ denotes the
$i$-th coordinate of~$\x$, and $\|\x\|_\infty \defin \max_{i=1,\ldots,m} |\x_i|
= \lim_{q \to \infty} \|\x\|_q$.  We also define the $\ell_0$-pseudo-norm as
the number of nonzero elements in a vector:\footnote{Note that it would
be more proper to write $\|\x\|_0^0$ instead of $\|\x\|_0$ to be consistent with the traditional notation $\|\x\|_q$.
However, for the sake of simplicity, we will keep this
notation unchanged in the rest of the paper.}
$\|\x\|_0 \defin \#\{i \st \x_i
\neq 0  \} = \lim_{q \to 0^+}  (\sum_{i=1}^m |\x_i|^q)$.  We consider the
Frobenius norm of a matrix~$\X$ in~$\Real^{m \times n}$: $\NormFro{\X} \defin
(\sum_{i=1}^m \sum_{j=1}^n \X_{ij}^2)^{{1}/{2}}$, where $\X_{ij}$ denotes the entry of~$\X$ at row $i$ and column $j$. Finally, for a scalar $y$, we denote $(y)_+ \defin \max(y,0)$.
For an integer $p > 0$, we denote by $2^{\{1,\dots,p\}}$ the powerset composed of the $2^p$ subsets of $\{1,\ldots,p\}$.

The rest of this paper is organized as follows: Section \ref{sec:related}
presents structured sparse models and related work.  Section \ref{sec:optim_prox} 
is devoted to proximal gradient algorithms, and 
 Section \ref{sec:optim_prox2}
to proximal splitting methods.
Section~\ref{sec:exp} presents several experiments and applications demonstrating the
effectiveness of our approach and Section~\ref{sec:ccl} concludes the paper.

\section{Structured Sparse Models} \label{sec:related}
We are interested in machine learning problems where the solution is not only known beforehand
to be sparse---that is, the solution has only a few non-zero coefficients, but also
to form non-zero patterns with a specific structure.
It is indeed possible to encode
additional knowledge in the regularization other than just sparsity. 
For instance, one may want the
non-zero patterns to be structured in the form of non-overlapping
groups~\citep{turlach,yuan,Stojnic2009,obozinski}, in a tree~\citep{zhao,bach4,jenatton3,jenatton4},
or in overlapping groups~\citep{jenatton,jacob,huang,baraniuk,cehver,he}, which
is the setting we are interested in here.

As for classical non-structured sparse models, there are basically two lines of
research, that either (A) deal with nonconvex and combinatorial formulations
that are in general computationally intractable and addressed with greedy
algorithms or (B) concentrate on convex relaxations solved with convex
programming methods.

\subsection{Nonconvex Approaches}
A first approach introduced by~\citet{baraniuk} consists in imposing that the
sparsity pattern of a solution (i.e., its set of non-zero coefficients) is
in a predefined subset of groups of variables $\GG \subseteq 2^{\{1,\ldots,p\}}$. 
Given this a priori knowledge, a greedy
algorithm~\citep{needell} is used to address the following nonconvex structured sparse
decomposition problem
\begin{displaymath}
 \min_{\w \in\Real^{p}} \frac{1}{2} \|\y-\X\w\|_2^2 \st \supp(\w) \in \GG ~~\text{and}~~ \|\w\|_0 \leq s,
\end{displaymath}
where $s$ is a specified sparsity level (number of nonzeros coefficients), $\y$ in $\Real^m$ is an observed signal, $\X$ is a design matrix in $\Real^{m \times p}$ and $\supp(\w)$ is the support of $\w$ (set of non-zero entries).

In a different approach motivated by the minimum description length
principle~\citep[see][]{barron}, \citet{huang} consider a collection of groups
$\GG \subseteq 2^{\{1,\ldots,p\}}$, and define a ``coding length'' for every
group in $\GG$, which in turn is used to define a coding length for every
pattern in $2^{\{1,\ldots,p\}}$. Using this tool, they propose a
regularization function $\text{cl}: \Real^p \to \Real$ such that for a vector $\w$ in
$\Real^p$, $\text{cl}(\w)$ represents the number of bits that are used for
encoding~$\w$.
The corresponding optimization problem is also addressed with a greedy procedure:
\begin{displaymath}
 \min_{\w \in\Real^{p}} \frac{1}{2} \|\y-\X\w\|_2^2 \st ~ \text{cl}(\w) \leq s,
\end{displaymath}
Intuitively, this formulation encourages solutions~$\w$ whose sparsity patterns have a
small coding length, meaning in practice that they can be represented by a union of a
small number of groups.  Even though they are related, this model is different
from the one of~\citet{baraniuk}.

These two approaches are encoding a priori knowledge on the shape of non-zero
patterns that the solution of a regularized problem should have. A different
point of view consists of modelling the zero patterns of the solution---that is,
define groups of variables that should be encouraged to be set to zero together.
After defining a set $\GG \subseteq 2^{\{1,\ldots,p\}}$ of such groups of variables, 
the following penalty can naturally be used as a regularization to induce the desired property
\begin{equation}
\psi(\w) \defin \sum_{g\in\GG} \eta_g \delta^g(\w),\ \text{with}\
\delta^g(\w) \defin
\begin{cases}
  1 & \text{if there exists}\ j\in g\ \text{such that}\ \w_j\neq 0,\\
  0 & \text{otherwise},
\end{cases}  \label{eq:nonconvex}
\end{equation}
where the $\eta_g$'s are positive weights.
This penalty was considered by \citet{bach6}, who showed that 
the convex envelope of such nonconvex functions (more precisely strictly
positive, non-increasing submodular functions of
$\supp(\w)$,~\citealp[see][]{fujishige2}) when restricted on the unit $\ell_\infty$-ball, are in fact types of structured
sparsity-inducing norms which are the topic of the next section.

\subsection{Convex Approaches with Sparsity-Inducing Norms}
In this paper, we are interested in convex regularizations which induce structured sparsity.
Generally, we consider the following optimization problem
\begin{equation}
   \min_{\w \in \Real^p} f(\w) + \lambda \Omega(\w), \label{eq:formulation}
\end{equation}
where $f: \Real^p \to \Real$ is a convex function (usually an empirical risk in machine
learning and a data-fitting term in signal processing), and $\Omega: \Real^p \to
\Real$ is a structured sparsity-inducing norm, defined~as
\begin{equation}
   \Omega(\w) \, \defin\, \sum_{g \in \GG} \eta_g \|\w_g\|, \label{eq:omega}
\end{equation}
where $\GG \subseteq 2^{\{1,\ldots,p\}}$ is a set of groups of variables, 
the vector~$\w_g$ in $\R{|g|}$ represents the coefficients of $\w$ indexed by
$g$ in $\GG$, the scalars~$\eta_g$ are positive weights, and $\|.\|$ denotes
the $\ell_2$- or $\ell_\infty$-norm. We now consider different cases:
\begin{itemize}
\item When $\GG$ is the set of singletons---that is $\GG \defin \{ \{1\},\{2\},\ldots,\{p\} \}$,
and all the $\eta_g$ are equal to one, $\Omega$ is the $\ell_1$-norm, 
which is well known to induce sparsity. This leads for instance to the Lasso~\citep{tibshirani} or equivalently to basis
pursuit~\citep{chen}. 
\item If $\GG$ is a partition of $\IntSet{p}$, i.e. the groups do not overlap,
variables are selected in groups rather than individually.  When the
coefficients of the solution are known to be organized in such a way,
explicitly encoding the a priori group structure in the regularization can
improve the prediction performance and/or interpretability of the learned
models~\citep{turlach,yuan,roth2,Stojnic2009,huang2,obozinski}. Such a penalty is commonly
called group-Lasso penalty.
\item When the groups overlap, $\Omega$ is still a norm and sets groups of variables
to zero together~\cite{jenatton}.  The latter setting has first been considered
for hierarchies~\cite{zhao,kim3,bach4,jenatton3,jenatton4}, and then extended to general
group structures~\cite{jenatton}.  Solving Eq.~(\ref{eq:formulation}) in this
context is a challenging problem which is the topic of this paper.
\end{itemize}
Note that other types of structured-sparsity inducing norms have also been
introduced, notably the approach of \citet{jacob}, which penalizes the
following quantity 
\begin{displaymath}
   \Omega'(\w) \, \defin\, \min_{ \xib=(\xib^g)_{g \in \GG} \in \Real^{p \times |\GG|}} \sum_{g \in \GG} \eta_g \|\xib^g\| \st  \w = \sum_{g \in \GG} \xib^g ~~\text{and}~~ \forall g,~ \supp(\xib^g) \subseteq g.
\end{displaymath}
This penalty, which is also a norm, can be seen as a convex relaxation of the
regularization introduced by~\citet{huang}, and encourages the sparsity pattern of the solution
to be a union of a small number of groups. Even though both~$\Omega$
and~$\Omega'$ appear under the terminology of ``structured sparsity with
overlapping groups'', they have in fact significantly different purposes and algorithmic treatments.
For example,~\citet{jacob} consider the problem of selecting genes in a gene
network which can be represented as the union of a few predefined pathways in
the graph (groups of genes), which overlap.  In this case, it is natural to use
the norm~$\Omega'$ instead of~$\Omega$.  On the other hand, we present a matrix
factorization task in Section~\ref{subsec:cur}, where the set of zero-patterns
should be a union of groups, naturally leading to the use of~$\Omega$.  Dealing
with~$\Omega'$ is therefore relevant, but out of the scope of this~paper.

\subsection{Convex Optimization Methods Proposed in the Literature}
Generic approaches to solve Eq.~(\ref{eq:formulation}) mostly rely on
subgradient descent schemes~\citep[see][]{bertsekas}, and interior-point
methods~\cite{boyd}.  These generic tools do not scale well to large problems
and/or do not naturally handle sparsity (the solutions they return may have
small values but no ``true'' zeros).  These two points prompt the need for
dedicated methods.

To the best of our knowledge, only a few recent papers have addressed problem Eq.~(\ref{eq:formulation})
with dedicated optimization procedures, and in fact, only when $\Omega$ is a linear combination
of $\ell_2$-norms.
In this setting, a first line of work deals with the non-smoothness of $\Omega$ by 
expressing the norm as the minimum over a set of smooth functions. 
At the cost of adding new variables (to describe the set of smooth functions), 
the problem becomes more amenable to optimization.
In particular, reweighted-$\ell_2$ schemes consist of approximating the norm $\Omega$ by successive quadratic upper bounds~\cite{argyriou, Rakotomamonjy2008, Jenatton2010, Micchelli2010}.
It is possible to show for instance that
$$
\Omega(\w) = \min_{(z_g)_{g \in \GG} \in \Real^{|\GG|}_+} \frac{1}{2} 
\bigg\{ 
\sum_{g\in\GG} \frac{\eta_g^2 \|\w_g\|_2^2}{z_g} + z_g 
\bigg\}.
$$
Plugging the previous relationship into Eq.~(\ref{eq:formulation}),
the optimization can then be performed by alternating between 
the updates of $\w$ and the additional variables $(z_g)_{g \in \GG}$.\footnote{Note
that such a scheme is interesting only if the optimization with respect to~$\w$ is simple, 
which is typically the case with the square loss function~\citep{bach5}.
Moreover, for this alternating scheme to be provably convergent,  
the variables $(z_g)_{g \in \GG}$ have to be bounded away from zero, 
resulting in solutions whose entries may have small values, but not ``true'' zeros.}
When the norm $\Omega$ is defined as a linear combination of $\ell_\infty$-norms, 
we are not aware of the existence of such variational formulations.

Problem~(\ref{eq:formulation}) has also been addressed with working-set algorithms~\citep{bach4, jenatton, Schmidt2010}.
The main idea of these methods is to solve a sequence of increasingly larger subproblems of~(\ref{eq:formulation}). 
Each subproblem consists of an instance of Eq.~(\ref{eq:formulation})
reduced to a specific subset of variables known as the \textit{working set}.
As long as some predefined optimality conditions are not satisfied, 
the working set is augmented with selected inactive variables~\cite[for more details, see][]{bach5}.

The last approach we would like to mention is that of ~\citet{chen2},
who used a smoothing technique introduced by~\citet{nesterov3}.  A smooth
approximation $\Omega_\mu$ of $\Omega$ is used, when $\Omega$ is a sum of
$\ell_2$-norms, and $\mu$ is a parameter controlling the trade-off between
smoothness of $\Omega_\mu$ and quality of the approximation.  Then, 
Eq.~(\ref{eq:formulation}) is solved with accelerated gradient
techniques~\citep{beck,nesterov} but $\Omega_\mu$ is substituted to the
regularization~$\Omega$. Depending on the required precision for solving the
original problem, this method provides a natural choice for the parameter
$\mu$, with a known convergence rate.
A drawback is that it requires to choose the precision of the optimization beforehand.
Moreover, since a $\ell_1$-norm is added to the smoothed~$\Omega_\mu$, 
the solutions returned by the algorithm might be sparse but possibly without respecting the structure encoded by~$\Omega$.
This should be contrasted with other smoothing techniques, e.g., the reweighted-$\ell_2$ scheme
we mentioned above, where the solutions are only approximately sparse.

\section{Optimization with Proximal Gradient Methods} \label{sec:optim_prox}
We address in this section the problem of solving Eq.~(\ref{eq:formulation}) 
under the following assumptions:
\begin{itemize}
 \item \emph{$f$ is differentiable with Lipschitz-continuous gradient.} For
machine learning problems, this hypothesis holds when $f$ is for example the square, logistic or
multi-class logistic loss~\citep[see][]{shawe}.
 \item \emph{$\Omega$ is a sum of $\ell_\infty$-norms.} 
 Even though the $\ell_2$-norm is sometimes used in the
literature~\citep{jenatton}, and is in fact used later in Section~\ref{sec:optim_prox2},
the $\ell_\infty$-norm is piecewise linear, and we take advantage of this property in this work.
\end{itemize}
To the best of our knowledge, no dedicated optimization method has been
developed for this setting.  Following~\citet{jenatton3,jenatton4} who tackled the
particular case of hierarchical norms, we propose to use proximal gradient
methods, which we now introduce.

\subsection{Proximal Gradient Methods}
Proximal methods have drawn increasing attention in the signal processing
\citep[e.g.,][and numerous references therein]{wright,
combette} and the machine learning communities \citep[e.g.,][and
references therein]{bach5}, especially because of their convergence rates
(optimal for the class of first-order techniques) and their ability to deal
with large nonsmooth convex problems~\citep[e.g.,][]{nesterov,beck}.

These methods are iterative procedures that can be seen as an
extension of gradient-based techniques when the objective function to
minimize has a nonsmooth part.
The simplest version of this class of methods linearizes at each iteration the
function $f$ around the current estimate $\tildew$, and this estimate is updated
as the (unique by strong convexity) solution of the \textit{proximal} problem,
defined~as:
\begin{displaymath}
    \min_{\w \in \R{p}} f(\tildew) + (\w - \tildew)^\top \nabla f(\tildew) + \lambda \Omega(\w) + {\displaystyle \frac{L}{2}}\|\w - \tildew\|_2^2.
\end{displaymath}
The quadratic term keeps the update in a neighborhood where $f$ is close to its
linear approximation, and $L\!>\!0$ is a parameter which is a upper bound on
the Lipschitz constant of $\nabla f$.  This problem can be equivalently
rewritten~as:
\begin{displaymath}
   \min_{\w \in \R{p}} {\displaystyle \frac{1}{2}} \big\|\tildew - \frac{1}{L} \nabla f(\tildew)-\w\big\|_2^2 + \frac{\lambda}{L} \Omega(\w),
\end{displaymath}
Solving \textit{efficiently} and exactly this problem allows to
attain the fast convergence rates of proximal methods, i.e., reaching a precision of $O(\frac{L}{k^2})$ in $k$ iterations.\footnote{Note, however, that fast convergence rates can also be achieved while solving approximately the 
proximal problem~\citep[see][for more details]{schmidt}.}
In addition, when the nonsmooth term~$\Omega$ is not present, the previous
proximal problem exactly leads to the standard gradient update rule.
More generally, we define the \textit{proximal operator}: 
\begin{definition}[Proximal Operator]~\newline
The proximal operator associated with our regularization term $\lambda\Omega$, which we denote by $\text{Prox}_{\lambda \Omega}$, is the function that maps a vector $\u \in \R{p}$ to the unique solution of
\begin{equation}\label{eq:prox_problem}
   \min_{\w \in \R{p}} \frac{1}{2} \NormDeux{\u-\w}^2 + \lambda  \Omega(\w).
\end{equation}
\end{definition}
This operator was initially introduced by~\citet{moreau} to generalize the 
projection operator onto a convex set. What makes proximal methods appealing to solve sparse
decomposition problems is that this operator can often be computed in closed form. For instance,
\begin{itemize}
   \item When $\Omega$ is the $\ell_1$-norm---that is $\Omega(\w)=\|\w\|_1$---
      the proximal operator is the well-known elementwise soft-thresholding
      operator,
      \begin{displaymath}
         \forall j \in \InSet{p},~~\u_j \mapsto \sign(\u_j)(|\u_j|-\lambda)_+~~~ = \begin{cases}
            0 & \text{if}~~ |\u_j| \leq \lambda \\
            \sign(\u_j)(|\u_j|-\lambda) & \text{otherwise}. 
         \end{cases}
      \end{displaymath}
   \item When $\Omega$ is a group-Lasso penalty with $\ell_2$-norms---that is,
      $\Omega(\u)=\sum_{g\in \GG}\|\u_g\|_2$, with $\GG$ being a partition of
      $\InSet{p}$, the proximal problem is \emph{separable} in every group, 
      and the solution is a generalization of the soft-thresholding operator to
      groups of variables:
      \begin{displaymath}
         \forall g\in \G~~, \u_g \mapsto \u_g-\Pi_{\|.\|_2 \leq \lambda}[\u_g] = \begin{cases}
            0 & \text{if}~~ \|\u_g\|_2 \leq \lambda \\
	    \frac{\|\u_g\|_2-\lambda}{\|\u_g\|_2}\u_g & \text{otherwise}, \\
         \end{cases} 
      \end{displaymath}
      where $\Pi_{\|.\|_2 \leq \lambda}$ denotes the orthogonal projection onto the ball of 
      the $\ell_2$-norm of radius $\lambda$.
   \item When $\Omega$ is a group-Lasso penalty with $\ell_\infty$-norms---that is,
      $\Omega(\u)=\sum_{g\in \G}\|\u_g\|_\infty$, with $\GG$ being a partition of
      $\InSet{p}$, the solution is a different group-thresholding
      operator:
      \begin{displaymath}
         \forall g\in \G,~~\u_g \mapsto \u_g - \Pi_{\|.\|_1 \leq \lambda}[\u_g],
      \end{displaymath}
      where $\Pi_{\|.\|_1 \leq \lambda}$ denotes the orthogonal projection onto
      the $\ell_1$-ball of radius $\lambda$, which can be solved in $O(p)$
      operations~\cite{brucker,maculan}. Note that when $\|\u_g\|_1 \leq \lambda$, we
      have a group-thresholding effect, with $\u_g - \Pi_{\|.\|_1 \leq \lambda}[\u_g]=0$.
  \item When $\Omega$ is a tree-structured sum of $\ell_2$- or $\ell_\infty$-norms as introduced by~\citet{zhao}---meaning that two groups are either disjoint or one is included in the other, the solution admits a closed form. 
Let $\preceq$ be a total order on $\G$ such that for $g_1, g_2$ in $\G$, $g_1 \preceq g_2$ if and only if either $g_1 \subset g_2$ or $g_1 \cap g_2=\emptyset$.\footnote{For a tree-structured set $\G$, such an order exists.} Then, if $g_1 \preceq \ldots \preceq g_{|\G|}$, and if we define $\text{Prox}^g$ as (a) the proximal operator $\u_g \mapsto \text{Prox}_{\lambda \eta_g \|\cdot\|}(\u_g)$ on the subspace corresponding to group $g$ and (b) the identity on the orthogonal, \citet{jenatton3,jenatton4} showed that:
\begin{equation}
\text{Prox}_{\lambda\Omega}=\text{Prox}^{g_m} \circ \ldots \circ \text{Prox}^{g_1},
\end{equation}
which can be computed in $O(p)$ operations. It also includes the sparse group Lasso (sum of group-Lasso penalty and $\ell_1$-norm) of~\citet{friedman2} and \citet{sprechmann}.
\end{itemize}
The first contribution of our paper is to address the case of general overlapping groups with~$\ell_\infty$-norm.

\subsection{Dual of the Proximal Operator}
We now show that, for a set ~$\G$ of general overlapping groups, a convex dual of the proximal problem~(\ref{eq:prox_problem})  can be reformulated as a \emph{quadratic min-cost flow problem}. We then propose an efficient algorithm to solve
it exactly, as well as a related algorithm to compute the dual norm of~$\Omega$.
We start by considering the dual formulation to problem~(\ref{eq:prox_problem}) introduced by~\citet{jenatton3,jenatton4}:
\begin{lemma}
   [Dual of the proximal problem,~\citealp{jenatton3,jenatton4}]
\label{lem:dual}~\newline
   Given $\u$ in $\R{p}$, consider the problem
\begin{equation}
   \min_{ \xib \in \RR{p}{|\G|} } \frac{1}{2} \| \u - \sum_{g \in \G}
   \xib^g  \|^2_2 ~~\mbox{ s.t. }~~\forall g\in\G,\
   \|\xib^g\|_1 \leq \lambda \eta_g ~~~\mbox{ and }~~~ \, \xib^g_j = 0 \, \mbox{ if }
   \, j \notin g ,\label{eq:dual_problem}
\end{equation}
where $\xib\! =\! (\xib^g)_{g \in \G}$ is in $\RR{p}{|\G|}$, and $\xib^g_j$ denotes the $j$-th coordinate
of the vector $\xib^g$. Then, every solution $\xib^\star\! =\! (\xib^{\star g})_{g \in \G}$ of Eq.~(\ref{eq:dual_problem}) satisfies $\w^\star\! =\! \u\!-\!\sum_{g \in \G}\xib^{\star g}$, where $\w^\star$ is the solution of Eq.~(\ref{eq:prox_problem}) when~$\Omega$ is a weighted sum of~$\ell_\infty$-norms.
\end{lemma}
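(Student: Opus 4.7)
The plan is to derive the dual by expressing each $\ell_\infty$-norm through its variational (dual-norm) representation and then swapping the resulting min-max. Specifically, for each group $g$, the $\ell_\infty$-norm restricted to coordinates in $g$ has the dual representation
\[
 \eta_g \|\w_g\|_\infty = \max_{\xib^g} \, \langle \xib^g, \w \rangle \quad \text{s.t.} \quad \|\xib^g\|_1 \le \eta_g,~~~ \xib^g_j = 0 \text{ for } j \notin g .
\]
Summing over $g \in \G$ and scaling by $\lambda$, the proximal problem~(\ref{eq:prox_problem}) becomes
\[
 \min_{\w \in \R{p}} \, \max_{\xib \in \C} \, \frac{1}{2} \|\u - \w\|_2^2 + \Big\langle \sum_{g \in \G} \xib^g, \w \Big\rangle,
\]
where $\C$ denotes the convex, compact feasible set of $\xib = (\xib^g)_{g \in \G}$ defined by $\|\xib^g\|_1 \le \lambda \eta_g$ and $\supp(\xib^g) \subseteq g$.

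Next, I would invoke a minimax theorem (von Neumann/Sion) to exchange the min and max: the objective is convex in $\w$, concave (in fact linear) in $\xib$, and $\C$ is compact, so strong duality holds. After the swap, the inner unconstrained minimization in $\w$ is a simple quadratic whose optimum is attained at $\w^\star(\xib) = \u - \sum_{g \in \G} \xib^g$. Substituting this back into the objective yields, up to the additive constant $\tfrac{1}{2}\|\u\|_2^2$, the quantity $-\tfrac{1}{2}\bigl\|\u - \sum_{g} \xib^g\bigr\|_2^2$, so that maximizing over $\xib \in \C$ is equivalent to problem~(\ref{eq:dual_problem}).

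Finally, to conclude the correspondence between the primal and dual optima, I would argue as follows: for any optimal $\xib^\star$ of the dual, the inner minimizer formula gives a candidate $\w^\star = \u - \sum_{g} \xib^{\star g}$; since strong duality holds and the primal problem is strictly convex (so its minimizer is unique), this candidate must coincide with that unique primal minimizer. Equivalently, one can write down the KKT system for the saddle-point problem and read off the same identity $\w^\star = \u - \sum_g \xib^{\star g}$ from stationarity in $\w$.

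The main subtlety is justifying the min-max exchange and the resulting primal-dual recovery formula rigorously. The convexity-concavity structure together with the compactness of $\C$ (or, equivalently, Slater's condition for the primal, which is trivially satisfied since the primal is unconstrained) delivers strong duality, and strict convexity of the primal in $\w$ ensures that the formula $\w^\star = \u - \sum_g \xib^{\star g}$ identifies a single primal optimum from any dual optimum. Everything else reduces to routine algebraic manipulations of quadratic forms.
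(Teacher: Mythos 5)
Your derivation is correct. Note that the paper itself does not prove this lemma: it is imported from Jenatton et al.\ without proof, so there is no in-paper argument to match against; the closest analogue is the proof of the dual formulation of $\Omega^*$ in Appendix~B.2, which proceeds by introducing auxiliary variables $\alpha_g$, forming a Lagrangian with conic constraints, and invoking Slater's condition. Your route---writing each term $\lambda\eta_g\|\w_g\|_\infty$ via its $\ell_1$ dual-norm representation, swapping $\min$ and $\max$ by Sion's theorem (justified since the feasible set $\C$ is convex and compact and the objective is convex--linear), and eliminating $\w$ by the stationarity formula $\w=\u-\sum_{g\in\G}\xib^g$---is an equivalent Fenchel/minimax version of the same standard duality computation, and it buys a slightly more self-contained argument since it never needs the conic reformulation. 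The algebra checks out: substituting $\w=\u-\sum_g\xib^g$ gives $\langle\u,\sum_g\xib^g\rangle-\frac{1}{2}\|\sum_g\xib^g\|_2^2=\frac{1}{2}\|\u\|_2^2-\frac{1}{2}\|\u-\sum_g\xib^g\|_2^2$, so maximizing over $\C$ is indeed Eq.~(\ref{eq:dual_problem}). The only place you are terse is the final recovery step: ``strong duality plus uniqueness'' does not by itself force the candidate to equal the primal minimizer; the clean argument is the sandwich $P(\w^\star)=\max_{\xib\in\C}L(\w^\star,\xib)\geq L(\w^\star,\xib^\star)\geq\min_{\w}L(\w,\xib^\star)=D(\xib^\star)=P(\w^\star)$, which forces $\w^\star$ to be the (unique, by strong convexity in $\w$) minimizer of $L(\cdot,\xib^\star)$, namely $\u-\sum_{g\in\G}\xib^{\star g}$. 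This is routine and exactly the KKT stationarity reading you mention, so it is a matter of exposition rather than a gap.
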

Without loss of generality,\footnote{
Let $\xib^\star$ denote a solution of Eq.~(\ref{eq:dual_problem}).
Optimality conditions of Eq.~(\ref{eq:dual_problem}) derived in~\citet{jenatton3,jenatton4}
show that for all $j$ in $\IntSet{p}$, the signs of the
non-zero coefficients $\xib_j^{\star g}$ for $g$ in $\GG$ are the same as the signs of the entries $\u_j$.  
To solve Eq.~(\ref{eq:dual_problem}), one can therefore flip
the signs of the negative variables $\u_j$, then solve the modified dual formulation
(with non-negative variables), which gives the magnitude of the entries
$\xib_j^{\star g}$ (the signs of these being known).}
we assume from now on that the scalars $\u_j$
are all non-negative, and we constrain the entries of~$\xib$ to be so.
Such a formulation introduces $p|\G|$ dual variables which can be much greater than $p$, the number of primal variables,
but it removes the issue of overlapping regularization.  We now associate a graph
with problem~(\ref{eq:dual_problem}), on which the variables $\xib_j^g$, for $g$ in $\GG$ and~$j$ in $g$,
can be interpreted as measuring the components of a flow. 

\subsection{Graph Model} \label{subsec:graph}
Let $G$ be a directed graph $G=(V,E,s,t)$, where $V$ is a set of vertices,
$E\subseteq V\times V$ a set of arcs, $s$ a source, and $t$ a sink. 
For all arcs in $E$, we define a non-negative capacity constant,
and as done classically in the network flow literature~\citep{ahuja,bertsekas2}, we
define a \emph{flow} as a non-negative function on arcs that satisfies capacity
constraints on all arcs (the value of the flow on an arc is less than or equal
to the arc capacity) and conservation constraints on all vertices (the sum of
incoming flows at a vertex is equal to the sum of outgoing flows) except for
the source and the sink.
For every arc $e$ in $E$, we also define a real-valued cost function, which depends on the value of the flow
on~$e$.
We now introduce the \emph{canonical} graph $G$ associated with our optimization problem:
\begin{definition}[Canonical Graph]~\newline
Let $\GG \subseteq \InSet{p}$ be a set of groups, and $(\eta_g)_{g \in \GG}$ be positive weights. 
The canonical graph $G=(V,E,s,t)$
is the unique graph defined as follows:
\begin{enumerate}
\item $V = V_u \cup V_{gr}$, where $V_u$ is a vertex set of size $p$, one vertex being associated to 
each index $j$ in $\IntSet{p}$, and $V_{gr}$ is a vertex set of size $|\GG|$, one vertex per group~$g$ in~$\GG$. 
We thus have $|V|=|\GG|+p$. For simplicity, we identify groups $g$ in $\GG$ and indices $j$ in $\InSet{p}$ with
vertices of the graph, such that one can from now on refer to ``vertex $j$'' or ``vertex $g$''.
\item For every group $g$ in $\GG$, $E$ contains an arc $(s,g)$.
These arcs have capacity $\lambda\eta_g$ and zero cost. 
\item For every group $g$ in $\GG$, and every index $j$ in $g$, $E$
contains an arc $(g,j)$ with zero cost and infinite capacity. We
denote by $\xib_j^g$ the flow on this arc. 
\item For every index $j$ in $\IntSet{p}$, $E$ contains an arc $(j,t)$
with infinite capacity and a cost \mbox{$\frac{1}{2}(\u_j-\xibbar_j)^2$}, where $\xibbar_j$ is the flow on $(j,t)$.
\end{enumerate}
\end{definition}
Examples of canonical graphs are given in Figures
\ref{subfig:grapha}-\subref*{subfig:graphc} for three simple group structures.  
The flows $\xib_j^g$ associated
with $G$ can now be identified with the variables of problem
(\ref{eq:dual_problem}). Since we have assumed the entries of $\u$ to be non-negative, we can now reformulate Eq.~(\ref{eq:dual_problem}) as
\begin{equation}
   \min_{ \xib \in \Real_+^{p \times |\GG|}, \xibbar \in \Real^p} \sum_{j=1}^p \frac{1}{2} (\u_j - \xibbar_j)^2 \st \xibbar = \sum_{g \in \GG} \xib^g ~~\text{and}~~\forall g\in\GG,\
   \left\{ \sum_{j \in g} \xib_j^g \leq \lambda \eta_g ~~\text{and}~~ \supp(\xib^g) \subseteq g \right\}. \label{eq:dual_problem2}
\end{equation}
 Indeed,
\begin{itemize}
\item the only arcs with a cost are those leading to the sink, which have the form $(j,t)$, where $j$ is the index of a variable in $\InSet{p}$.
The sum of these costs is $\sum_{j=1}^p \frac{1}{2} (\u_j - \xibbar_j)^2$, which is the objective function minimized in Eq.~(\ref{eq:dual_problem2});
\item by flow conservation, we necessarily have $\xibbar_j=\sum_{g \in \GG}{\xib_j^g}$ in the canonical graph;
\item the only arcs with a capacity constraints are those coming out of the source, which have the form $(s,g)$, where $g$ is a group in $\GG$. 
By flow conservation, the flow on an arc $(s,g)$ is $\sum_{j \in g} \xib_j^g$ which should be less than $\lambda \eta_g$ by capacity constraints;
\item all other arcs have the form $(g,j)$, where $g$ is in $\GG$ and $j$ is in $g$. Thus, $\supp(\xib^g) \subseteq g$. 
\end{itemize}
Therefore we have shown that finding a flow \emph{minimizing the sum of the
costs} on such a graph is equivalent to solving
problem~(\ref{eq:dual_problem}). 
When some groups are included in others, the canonical graph can be simplified
to yield a graph with a smaller number of edges. Specifically, if $h$ and $g$
are groups with $h \subset g$, the edges $(g,j)$ for $j \in h$ carrying a flow
$\xib^g_j$ can be removed and replaced by a single edge $(g,h)$ of infinite
capacity and zero cost, carrying the flow $\sum_{j \in h} \xib^g_j$. This
simplification is illustrated in Figure~\ref{subfig:graphd}, with a graph
equivalent to the one of Figure~\ref{subfig:graphc}.  This does not change the
optimal value of~$\xibbar^\star$, which is the quantity of interest for
computing the optimal primal variable~$\w^\star$.  We present in
Appendix~\ref{appendix:equivalent} a formal definition of equivalent graphs.
These simplifications are useful in practice, since they reduce the number of
edges in the graph and improve the speed of our algorithms.
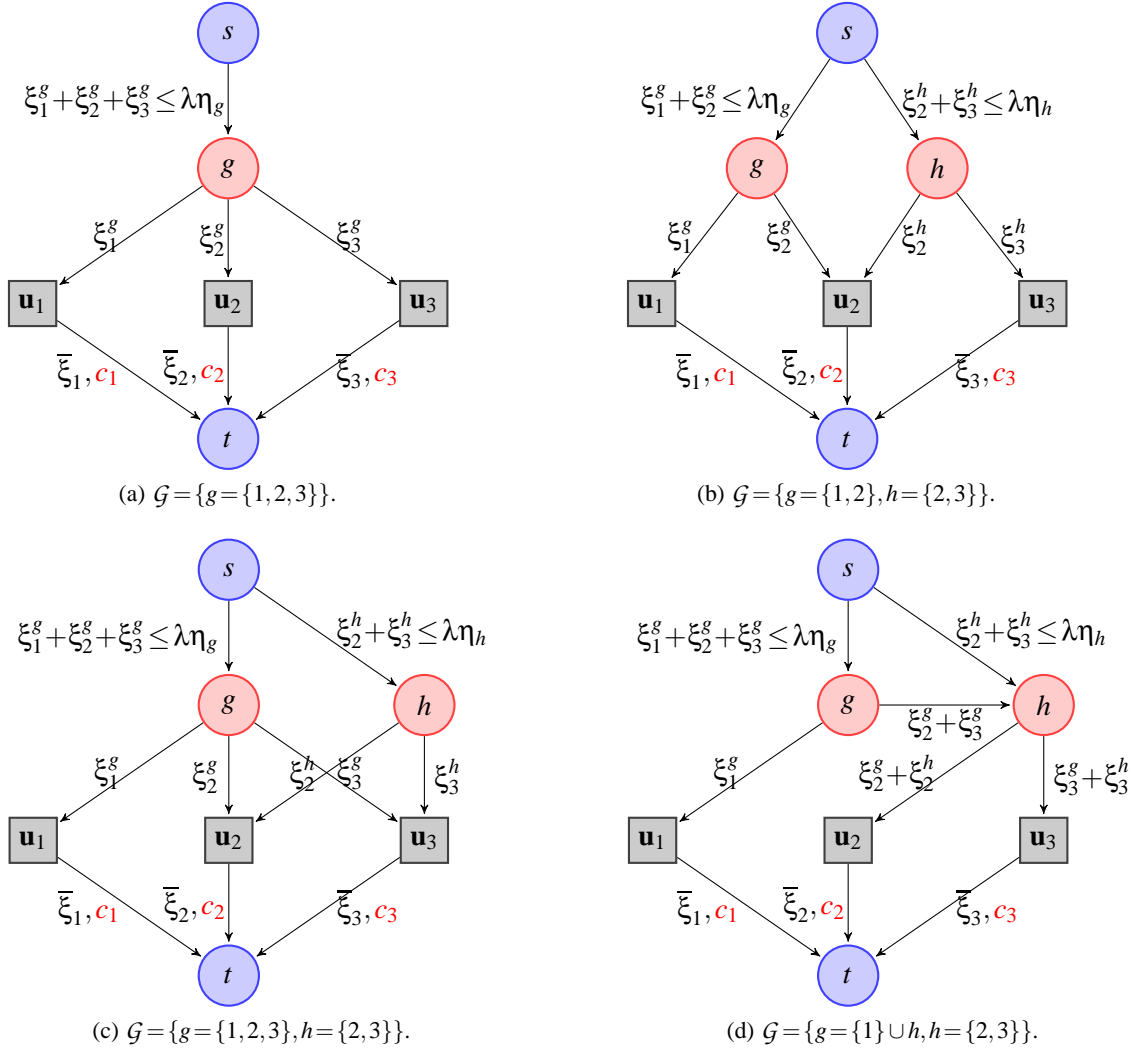
\begin{figure}[hbtp!]
\tikzstyle{source}=[circle,thick,draw=blue!75,fill=blue!20,minimum size=8mm]
\tikzstyle{sink}=[circle,thick,draw=blue!75,fill=blue!20,minimum size=8mm]
\tikzstyle{group}=[place,thick,draw=red!75,fill=red!20, minimum size=8mm]
\tikzstyle{var}=[rectangle,thick,draw=black!75,fill=black!20,minimum size=6mm]
\def\distnode{1.8cm}
\def\distnodex{2.6cm}
\tikzstyle{every label}=[red]
   \begin{center}
      \subfloat[$\GG\!=\!\{ g\!=\!\{1,2,3\} \}$.]{
      \begin{tikzpicture}[node distance=\distnode,>=stealth',bend angle=45,auto]
         \begin{scope}
            \node [source]   (s)                                    {$s$};
            \node [group]    (g1)  [below of=s]                      {$g$}
            edge  [pre] node[left,xshift=1mm] {$\xib^g_1 \!+\! \xib^g_2 \!+\! \xib^g_3 \!\leq\! \lambda \eta_g$} (s);
            \node [var] (u2) [below of=g1]                    {$\u_2$}
            edge  [pre] node[above, left,xshift=1mm] {$\xib^{g}_2$} (g1);
            \node [var] (u1)  [left of=u2, node distance=\distnodex] {$\u_1$}
            edge  [pre] node[above, left] {$\xib^{g}_1$} (g1);
            \node [var] (u3) [right of=u2, node distance=\distnodex] {$\u_3$}
            edge  [pre] node[above, right] {$\xib^{g}_3$} (g1);
            \node [sink] (si) [below of=u2] {$t$}
            edge [pre] node[above,left] {$\xibbar_1, \color{red} c_1$} (u1)
            edge [pre] node[above,left,xshift=1mm] {$\xibbar_2,\color{red} c_2$} (u2)
            edge [pre] node[above,right] {$\xibbar_3,\color{red} c_3$} (u3);
         \end{scope}
      \end{tikzpicture}\label{subfig:grapha}
      } \hfill 
      \subfloat[$\GG\!=\!\{ g\!=\!\{1,2\},h\!=\!\{2,3\} \}$.]{
      \begin{tikzpicture}[node distance=\distnode,>=stealth',bend angle=45,auto]
         \begin{scope}
            \node [source]   (s)                                    {$s$};
            \node [group]    (g)  [below of=s,xshift=-12mm]                      {$g$}
            edge  [pre] node[left] {$\xib^g_1 \!+ \!\xib^g_2 \!\leq \!\lambda \eta_g$} (s);
            \node [group]    (h)  [below of=s,xshift=12mm]                      {$h$}
            edge  [pre] node[right] {$\xib^h_2 \!+ \!\xib^h_3 \!\leq \!\lambda \eta_h$} (s);
            \node [var] (u2) [below of=g,xshift=12mm]                    {$\u_2$}
            edge  [pre] node[above, right] {$\xib^{h}_2$} (h)
            edge  [pre] node[above, left] {$\xib^{g}_2$} (g);
            \node [var] (u1)  [left of=u2, node distance=\distnodex] {$\u_1$}
            edge  [pre] node[above, left] {$\xib^{g}_1$} (g);
            \node [var] (u3) [right of=u2, node distance=\distnodex] {$\u_3$}
            edge  [pre] node[above, right] {$\xib^{h}_3$} (h);
            \node [sink] (si) [below of=u2] {$t$}
            edge [pre] node[above,left] {$\xibbar_1, \color{red} c_1$} (u1)
            edge [pre] node[above,left,xshift=1mm] {$\xibbar_2,\color{red} c_2$} (u2)
            edge [pre] node[above,right] {$\xibbar_3,\color{red} c_3$} (u3);
         \end{scope}
      \end{tikzpicture}\label{subfig:graphb}
      } ~~~~~~~~~~~\\
      \subfloat[$\GG\!=\!\{ g\!=\!\{1,2,3\},h\!=\!\{2,3\} \}$.]{
      \begin{tikzpicture}[node distance=\distnode,>=stealth',bend angle=45,auto]
         \begin{scope}
            \node [source]   (s)                                    {$s$};
            \node [group]    (g)  [below of=s]                      {$g$}
            edge  [pre] node[left] {$\xib^g_1 \!+ \!\xib^g_2 \!+ \!\xib^g_3 \!\leq \!\lambda \eta_g$} (s);
            \node [group]    (h)  [right of=g,node distance=\distnodex]                      {$h$}
            edge  [pre] node[right,yshift=1mm] {$\xib^h_2 \!+ \!\xib^h_3 \!\leq \!\lambda \eta_h$} (s);
            \node [var] (u2) [below of=g]                    {$\u_2$}
            edge  [pre] node[above, left] {$\xib^{h}_2$} (h)
            edge  [pre] node[above, left] {$\xib^{g}_2$} (g);
            \node [var] (u1)  [left of=u2, node distance=\distnodex] {$\u_1$}
            edge  [pre] node[above, left] {$\xib^{g}_1$} (g);
            \node [var] (u3) [right of=u2, node distance=\distnodex] {$\u_3$}
            edge  [pre] node[above, right] {$\xib^{g}_3$} (g)
            edge  [pre] node[above, right] {$\xib^{h}_3$} (h);
            \node [sink] (si) [below of=u2] {$t$}
            edge [pre] node[above,left] {$\xibbar_1, \color{red} c_1$} (u1)
            edge [pre] node[above,left,xshift=1mm] {$\xibbar_2,\color{red} c_2$} (u2)
            edge [pre] node[above,right] {$\xibbar_3,\color{red} c_3$} (u3);
         \end{scope}
      \end{tikzpicture}\label{subfig:graphc}
      } \hfill
      \subfloat[$\GG\!=\!\{ g\!=\!\{1\}\cup h,h\!=\!\{2,3\} \}$.]{
      \begin{tikzpicture}[node distance=\distnode,>=stealth',bend angle=45,auto]
         \begin{scope}
            \node [source]   (s)                                    {$s$};
            \node [group]    (g)  [below of=s]                      {$g$}
            edge  [pre] node[left] {$\xib^g_1 \!+ \!\xib^g_2 \!+ \!\xib^g_3 \!\leq \!\lambda \eta_g$} (s);
            \node [group]    (h)  [right of=g, node distance=\distnodex]                      {$h$}
            edge  [pre] node[right,yshift=1mm] {$\xib^h_2 \!+ \!\xib^h_3 \!\leq \!\lambda \eta_h$} (s)
            edge  [pre] node[below,yshift=1mm] {$\xib^g_2 \!+ \!\xib^g_3$} (g);
            \node [var] (u2) [below of=g]                    {$\u_2$}
            edge  [pre] node[above,left] {$\xib^g_2\!+\!\xib^{h}_2$} (h);
            \node [var] (u1)  [left of=u2, node distance=\distnodex] {$\u_1$}
            edge  [pre] node[above, left] {$\xib^{g}_1$} (g);
            \node [var] (u3) [right of=u2, node distance=\distnodex] {$\u_3$}
            edge  [pre] node[above, right] {$\xib^g_3\!+\!\xib^{h}_3$} (h);
            \node [sink] (si) [below of=u2] {$t$}
            edge [pre] node[above,left] {$\xibbar_1, \color{red} c_1$} (u1)
            edge [pre] node[above,left,xshift=1mm] {$\xibbar_2,\color{red} c_2$} (u2)
            edge [pre] node[above,right] {$\xibbar_3,\color{red} c_3$} (u3);
         \end{scope}
      \end{tikzpicture} \label{subfig:graphd}
      } 
   \end{center}
   \caption{Graph representation of simple proximal problems with different
   group structures $\GG$. The three indices $1,2,3$ 
   are represented as grey squares, and the groups
   $g,h$ in $\GG$ as red discs. The source is
   linked to every group $g,h$ with respective maximum capacity
   $\lambda\eta_g,\lambda\eta_h$ and zero cost. Each variable $\u_j$ is linked to the sink
   $t$, with an infinite capacity, and with a cost
   $c_j\!\defin\!\frac{1}{2}(\u_j-\xibbar_j)^2$. All other arcs in the graph have
   zero cost and infinite capacity. They represent inclusion relations in-between groups, and between groups and variables.
   The graphs
   \subref{subfig:graphc} and \subref{subfig:graphd} correspond to a special case of
   tree-structured hierarchy in the sense of \citet{jenatton3}. Their min-cost
   flow problems are equivalent.} \label{fig:graphs}
\end{figure}
\subsection{Computation of the Proximal Operator}\label{subsec:prox}
Quadratic min-cost flow problems have been well studied in the operations
research literature \citep{hochbaum}. One of the simplest cases, where~$\GG$
contains a single group as in Figure~\ref{subfig:grapha}, is solved by
an orthogonal projection on the $\ell_1$-ball of radius~$\lambda\eta_g$.  It
has been shown, both in machine learning~\citep{duchi} and operations
research~\citep{hochbaum,brucker}, that such a projection can be computed in $O(p)$
operations.  When the group structure is a tree as in
Figure~\ref{subfig:graphd}, 
strategies developed in the two communities are
also similar~\citep{jenatton3,hochbaum},\footnote{Note however that, 
while~\citet{hochbaum} only consider a tree-structured sum of $\ell_\infty$-norms, 
the results from~\citet{jenatton3} also apply for a sum of $\ell_2$-norms.} and solve the problem in $O(p d)$
operations, where $d$ is the depth of the tree.

The general case of overlapping groups is more difficult. \citet{hochbaum} have shown that \emph{quadratic min-cost flow problems} can be reduced to a specific
\emph{parametric max-flow} problem, for which an efficient algorithm
exists~\citep{gallo}.\footnote{By definition, a parametric max-flow problem
consists in solving, for every value of a parameter, a max-flow problem on a
graph whose arc capacities depend on this parameter.}
While this generic approach could be used to solve Eq.~(\ref{eq:dual_problem}),
we propose to use Algorithm~\ref{algo:prox} that also exploits
the fact that our graphs have non-zero costs only on edges leading to the sink.
As shown in Appendix~\ref{sec:speed_comp}, it it has a significantly better
performance in practice.  This algorithm clearly shares some similarities with
existing approaches in network flow optimization
such as the simplified version of~\citet{gallo} presented by~\citet{babenko} 
that uses a divide and conquer strategy. 
Moreover, an equivalent algorithm exists for minimizing convex functions over
polymatroid sets~\citep{groenevelt}. This equivalence, a priori non trivial, is uncovered through a representation of structured sparsity-inducing norms via
submodular functions, which was recently proposed by~\citet{bach6}. 

\begin{algorithm}[hbtp]
\caption{\hspace*{0.1cm}Computation of the proximal operator for overlapping groups.}\label{algo:prox}
\begin{algorithmic}[1]
\INPUT $\u \in \R{p}$, a set of groups $\GG$, positive weights
$(\eta_g)_{g\in\GG}$, and $\lambda$ (regularization parameter).
\STATE Build the initial graph $G_0=(V_0,E_0,s,t)$ as explained in Section \ref{subsec:prox}.
\STATE Compute the optimal flow: $\xibbar \leftarrow \text{\texttt{computeFlow}}(V_0,E_0)$.
\STATE {\bf{Return:}} $\w = \u-\xibbar$ (optimal solution of the proximal problem).
\end{algorithmic}
\vspace*{0.1cm}
{\bf Function} \texttt{computeFlow}($V=V_u \cup V_{gr},E$)
\begin{algorithmic}[1]
\STATE Projection step: $\gammab \leftarrow
\argmin_\gammab \sum_{j \in V_u}\frac{1}{2}(\u_j -\gammab_j)^2 \st  \sum_{j \in V_u} \gammab_j \leq \lambda\sum_{g \in V_{gr}}\eta_g.$
\STATE For all nodes $j$ in $V_u$, set $\gammab_j$ to be the capacity of the arc $(j,t)$.
\STATE Max-flow step: Update $(\xibbar_j)_{j \in V_u}$ by computing a max-flow on the graph $(V,E,s,t)$.
\IF{ $\exists~j \in V_u \st \xibbar_j \neq \gammab_j$}
\STATE Denote by $(s,V^+)$ and $(V^-,t)$ the two disjoint subsets of $(V,s,t)$ separated by
the minimum $(s,t)$-cut of the graph, and remove the arcs between $V^+$ and $V^-$. 
Call $E^+$ and $E^-$ the two remaining disjoint subsets of~$E$ corresponding to $V^+$ and $V^-$.
\STATE $(\xibbar_j)_{j \in V_u^+} \leftarrow \text{\texttt{computeFlow}}(V^+,E^+)$.
\STATE $(\xibbar_j)_{j \in V_u^-} \leftarrow \text{\texttt{computeFlow}}(V^-,E^-)$.
\ENDIF
\STATE {\bf{Return:}} $(\xibbar_j)_{j \in V_u}$.
\end{algorithmic}
\end{algorithm}

The intuition behind our algorithm, \texttt{computeFlow} (see Algorithm~\ref{algo:prox}), is the following: since $\xibbar=\sum_{g
\in \G} \xib^g$ is the only value of interest to compute the solution of the
proximal operator $\w = \u-\xibbar$, the first step looks for a candidate value $\gammab$ for 
$\xibbar$ by solving the following relaxed version of problem~(\ref{eq:dual_problem2}):
\begin{equation}
\argmin_{\gammab \in \Real^p} \sum_{j \in V_u}\frac{1}{2}(\u_j -\gammab_j)^2 \st  \sum_{j \in V_u} \gammab_j \leq \lambda\sum_{g \in V_{gr}}\eta_g. \label{eq:relaxed}
\end{equation}
The cost function here is the same as in problem~(\ref{eq:dual_problem2}), but
the constraints are weaker: Any feasible point of
problem~(\ref{eq:dual_problem2}) is also feasible for
problem~(\ref{eq:relaxed}). This problem can be solved in linear
time~\cite{brucker}.
Its solution, which we denote $\gammab$ for simplicity, provides the lower bound
$\|\u-\gammab\|_2^2/2$ for the optimal cost of problem~(\ref{eq:dual_problem2}).

The second step tries to construct a feasible flow $(\xib,\xibbar)$, satisfying additional capacity constraints equal to $\gamma_j$ on arc $(j,t)$, and
whose cost matches this lower bound; this latter problem can be cast as a max-flow problem~\citep{goldberg}. 
If such a flow exists, the algorithm returns $\xibbar=\gammab$, the cost of the flow reaches the lower bound, and is therefore optimal. 
If such a flow does not exist, we have $\xibbar \neq \gammab$, the lower bound is not achievable, 
and we build a minimum $(s,t)$-cut of the graph~\cite{ford} defining
two disjoints sets of nodes $V^+$ and~$V^-$; $V^+$ is the part of the graph 
which is reachable from the source (for every node $j$ in~$\V^+$, there exists a non-saturated path from $s$ to~$j$),
whereas all paths going from~$s$ to nodes in $V^-$ are saturated. More details
about these properties can be found at the beginning of Appendix~\ref{appendix:convergence}.
At this point, it is possible to show that the value of the optimal min-cost flow 
on all arcs between~$V^+$ and~$V^-$ is necessary zero.
Thus, removing them yields an equivalent optimization problem, 
which can be decomposed into two independent problems of smaller sizes and
solved recursively by the calls to \texttt{computeFlow}$(V^+,E^+)$ and
\texttt{computeFlow}$(V^-,E^-)$.  A formal proof of correctness of
Algorithm~\ref{algo:prox} and further details are relegated to
Appendix~\ref{appendix:convergence}.

The approach of~\citet{hochbaum,gallo} which recasts the quadratic min-cost
flow problem as a parametric max-flow is guaranteed to have the same worst-case
complexity as a single max-flow algorithm. However, we have experimentally
observed a significant discrepancy
between the worst case and empirical complexities for these flow problems,
essentially because the empirical cost of each max-flow is significantly smaller
than its theoretical cost.  Despite the fact that the worst-case guarantees for our algorithm 
is weaker than theirs (up to a factor $|V|$), 
it is more adapted to the
structure of our graphs and has proven to be much faster in our experiments
(see Appendix~\ref{sec:speed_comp}).\footnote{The best theoretical worst-case complexity of a max-flow is achieved by~\citet{goldberg} and is $O\big(|V||E|\log(|V|^2 / |E|)\big)$.
Our algorithm achieves the same worst-case complexity when the cuts are well balanced---that is~$|V^+| \approx |V^-| \approx |V|/2$, but we lose a factor~$|V|$ when it is not the case. The practical speed of such algorithms is however significantly different than their theoretical worst-case complexities~\citep[see][]{boykov}.}
Some implementation details are also crucial to the efficiency of the algorithm: 
\begin{itemize}
   \item \textbf{Exploiting maximal connected components}: When there exists no
arc between two subsets of $V$, the solution can be obtained by solving
two smaller optimization problems corresponding to the two disjoint subgraphs.
It is indeed possible to process them independently to
solve the global min-cost flow problem.  To that effect, before calling the
function \texttt{computeFlow}($V,E$), we look for maximal connected components
$(V_1,E_1),\ldots,(V_N,E_N)$ and call sequentially the procedure
\texttt{computeFlow}($V_i,E_i$) for $i$ in $\IntSet{N}$. 
\item  \textbf{Efficient max-flow algorithm}: We have implemented the
``push-relabel'' algorithm of~\citet{goldberg} to solve our max-flow
problems, using classical heuristics that significantly speed it up in practice;
see \citet{goldberg} and~\citet{cherkassky}.
 We use the so-called ``highest-active vertex
selection rule, global and gap heuristics''~\citep{goldberg,cherkassky},
which has a worst-case complexity of $O(|V|^2 |E|^{1/2})$ for a graph
$(V,E,s,t)$.
This algorithm leverages the concept of \emph{pre-flow} that relaxes the
definition of flow and allows vertices to have a positive excess.
\item \textbf{Using flow warm-restarts}: 
   The max-flow steps in our algorithm can be initialized with any valid pre-flow, enabling warm-restarts.
  This is also a key concept in the parametric max-flow algorithm of~\citet{gallo}.
 \item \textbf{Improved projection step}:
The first line of the procedure \texttt{computeFlow} can be replaced by 
$\gammab \leftarrow \argmin_\gammab \sum_{j \in V_u}\frac{1}{2}(\u_j
-\gammab_j)^2 \st  \sum_{j \in V_u} \gammab_j \leq \lambda\sum_{g \in V_{gr}}\eta_g ~\text{and}~
|\gammab_j| \leq \lambda\sum_{g \ni j}\eta_g.$
The idea is to build a relaxation of Eq.~(\ref{eq:dual_problem2}) which is
closer to the original problem than the one of Eq.~(\ref{eq:relaxed}),
but that still can be solved in linear time.
The structure of the graph will indeed not allow~$\xibbar_j$ to be greater
than $\lambda\sum_{g \ni j}\eta_g$ after the max-flow step. 
This modified
projection step can still be computed in linear time~\citep{brucker},
and leads to better performance.
\end{itemize}

\subsection{Computation of the Dual Norm}\label{subsec:dual}
The dual norm $\Omega^*$ of $\Omega$, defined for any vector $\kappab$ in $\R{p}$ by 
$$
\Omega^*(\kappab)\defin\max_{\Omega(\z)\leq 1}\z^\top\kappab,
$$
is a key quantity to study sparsity-inducing regularizations in many respects.
For instance, dual norms are central in working-set algorithms~\citep{jenatton, bach5}, 
and arise as well when proving theoretical estimation or prediction guarantees~\citep{negahban2009unified}.

In our context, we use it to monitor the convergence of the proximal method through a duality gap,
hence defining a proper optimality criterion for problem~(\ref{eq:formulation}).
As a brief reminder, the duality gap of a minimization problem is defined as the difference between
the primal and dual objective functions, 
evaluated for a feasible pair of primal/dual variables~\citep[see Section 5.5,][]{boyd}. 
This gap serves as a certificate of (sub)optimality: if it
is equal to zero, then the optimum is reached, and provided that strong duality holds, the converse
is true as well~\citep[see Section 5.5,][]{boyd}.
A description of the algorithm we use in the experiments~\citep{beck} 
along with the integration of the computation of the duality gap is given in Appendix~\ref{appendix:fista}.

We now denote by $f^*$ the Fenchel conjugate of $f$~\citep{borwein}, defined by $f^*(\kappab)\defin\sup_{\z} [\z^\top\kappab - f(\z)]$.
The duality gap for problem~(\ref{eq:formulation}) can be derived from standard Fenchel duality arguments \citep{borwein} and it is equal to
$$
 f(\w)+ \lambda \Omega(\w) + f^*(-\kappab)\ \text{for}\ \w,\kappab\ \text{in}\ \R{p}\ \text{with}\
\Omega^*(\kappab)\leq \lambda.
$$
Therefore, evaluating the duality gap requires to compute efficiently $\Omega^*$
in order to find a feasible dual variable~$\kappab$ 
(the gap is otherwise equal to $+\infty$ and becomes non-informative).
This is equivalent to solving another network flow problem, based on the following variational formulation:
\begin{equation}
\Omega^*(\kappab) = \!\!\! \min_{\xib\in\RR{p}{|\G|}} \!\!\! \tau \quad \text{s.t.}\quad \sum_{g\in\G}\xib^g=\kappab,\ \text{and}\ \forall g\in\G,\ \|\xib^g\|_1 \leq \tau\eta_g ~~\text{with}~~\
\xib_j^g=0\ \text{if}\ j\notin g. \label{eq:dual_norm}
\end{equation}
In the network problem associated with~(\ref{eq:dual_norm}), the capacities on the arcs $(s,g)$,
$g \in \GG$, are set to $\tau\eta_g$, and the capacities on the arcs $(j,t)$, $j$ in
$\IntSet{p}$, are fixed to $\kappab_j$.
Solving problem~(\ref{eq:dual_norm}) amounts to finding the smallest value of $\tau$, such
that there exists a flow saturating all the capacities $\kappab_j$ on the arcs
leading to the sink~$t$.
Equation~(\ref{eq:dual_norm}) and Algorithm~\ref{alg:dualnorm}
are proven to be correct in Appendix~\ref{appendix:convergence}.

\begin{algorithm}[hbtp]
\caption{\hspace*{0.1cm}Computation of the dual norm.}\label{algo:dual_norm}
\begin{algorithmic}[1]
\INPUT $\kappab \in \R{p}$, a set of groups $\GG$, positive weights
$(\eta_g)_{g\in\GG}$.
\STATE Build the initial graph $G_0=(V_0,E_0,s,t)$ as explained in Section~\ref{subsec:dual}.
\STATE $\tau \leftarrow \text{\texttt{dualNorm}}(V_0,E_0)$.
\STATE {\bf{Return:}} $\tau$ (value of the dual norm).
\end{algorithmic}
\vspace*{0.1cm}
{\bf Function} \texttt{dualNorm}($V = V_u \cup V_{gr},E$)
\begin{algorithmic}[1]
\STATE $\tau\! \leftarrow \!(\sum_{j \in V_u} \kappab_j) / (\sum_{g \in V_{gr}} \eta_g)$ and set the capacities of arcs $(s,g)$ to $\tau\eta_g$ for all $g$ in~$V_{gr}$.
\STATE Max-flow step: Update $(\xibbar_j)_{j \in V_u}$ by computing a max-flow on the graph $(V,E,s,t)$.
\IF{ $\exists~j \in V_u \st \xibbar_j \neq \kappab_j$}
\STATE Define $(V^+,E^+)$ and $(V^-,E^-)$ as in Algorithm~\ref{algo:prox}, and set $\tau \leftarrow \text{\texttt{dualNorm}}(V^-,E^-)$.
\ENDIF
\STATE {\bf {Return:} } $\tau$.
\end{algorithmic}\label{alg:dualnorm}
\end{algorithm}

\section{Optimization with Proximal Splitting Methods}\label{sec:optim_prox2}
We now present proximal splitting algorithms~\citep[see][and references
therein]{combettes2,combette,tomioka,boyd2} for solving Eq.~(\ref{eq:formulation}). Differentiability of $f$ is not required here and the regularization function can either
be a sum of $\ell_2$- or $\ell_\infty$-norms. However, we assume that:
\begin{itemize}
\item[\textbf{(A)}] either $f$ can be written $f(\w) = \sum_{i=1}^n \tilde{f}_i(\w)$, where the functions $\tilde{f}_i$ are such that $\text{prox}_{\gamma\tilde{f}_i}$ can be obtained in closed form for all $\gamma > 0$ and all $i$---that is, for all $\u$ in $\Real^m$, the following problems admit closed form solutions:
$
   \min_{\v \in \Real^m}  \frac{1}{2}\|\u-\v\|_2^2 + \gamma\tilde{f}_i(\v). 
$

    \item[\textbf{(B)}] or $f$ can be written $f(\w) = \tilde{f}(\X\w)$ for all $\w$ in $\Real^p$, where $\X$ in $\Real^{n \times p}$ is a design matrix, and one knows how to efficiently compute $\text{prox}_{\gamma\tilde{f}}$ for all $\gamma > 0$.
\end{itemize}
It is easy to show that this condition is satisfied for the square
and hinge loss functions, making it possible to build linear SVMs with
a structured sparse regularization.
These assumptions are not the same as
the ones of Section~\ref{sec:optim_prox}, and the scope of the problems addressed
is therefore slightly different.
Proximal splitting methods seem indeed to offer more flexibility regarding
the regularization function, since they can deal with sums of
$\ell_2$-norms.\footnote{We are not aware of any efficient algorithm
providing the exact solution of the proximal operator associated to a sum of
$\ell_2$-norms, which would be necessary for using (accelerated) proximal gradient methods. 
An iterative algorithm could possibly be used to compute
it approximately~\citep[e.g., see][]{jenatton3,jenatton4}, 
but such a procedure would be computationally expensive and would require to be able to 
deal with approximate computations of the proximal operators~\citep[e.g., see][and discussions therein]{combette,schmidt}.  We have
chosen not to consider this possibility in this paper.}
However, proximal gradient methods, as presented in
Section~\ref{sec:optim_prox}, enjoy a few advantages over proximal splitting
methods, namely: automatic parameter tuning with line-search
schemes~\citep{nesterov}, known convergence rates~\citep{nesterov,beck}, and
 ability to provide
sparse solutions (approximate solutions obtained with proximal splitting methods often have small values, but not ``true'' zeros).

\subsection{Algorithms}
We consider a class of algorithms which leverage the concept of variable
splitting~\citep[see][]{combette,bertsekas3,tomioka}.
The key is to introduce additional variables $\z^g$ in $\Real^{|g|}$, one for every group $g$ in $\GG$,
and equivalently reformulate Eq.~(\ref{eq:formulation}) as
\begin{equation}
   \min_{\substack{ \w \in \Real^p\\ \z^g \in \Real^{|g|}\ \text{for}\ g \in \GG} } f(\w) + \lambda
\sum_{g \in \GG} \eta_g \|\z^g\|   \st  \forall g \in \GG,~~ \z^g = \w_g, \label{eq:varsplit}
\end{equation}
The issue of overlapping groups is removed, but new constraints are added, and as in Section~\ref{sec:optim_prox},
the method introduces additional variables which induce a memory cost of $O(\sum_{g\in\GG}|g|)$.

To solve this problem, it is possible to use the so-called alternating direction method of
multipliers (ADMM)~\citep[see][]{combette,bertsekas3,tomioka,boyd2}.\footnote{This
method is used by~\citet{sprechmann} for computing the
proximal operator associated to hierarchical norms, and independently in the same context as ours
by~\citet{boyd2} and~\citet{qin}.}
It introduces dual variables $\nu^g$ in $\Real^{|g|}$ for all $g$ in $\GG$, and defines the
augmented Lagrangian:
\begin{displaymath}
   \L\big(\w,(\z^g)_{g\in\G},(\nu^g)_{g\in\G}\big) \defin f(\w) + \sum_{g \in \GG} 
   \big[\lambda\eta_g \|\z^g\|  + \nu^{g\top}(\z^g-\w_g) + \frac{\gamma}{2} \|\z^g - \w_g \|_2^2\big],
\end{displaymath}
where $\gamma > 0$ is a parameter.
It is easy to show that solving Eq.~(\ref{eq:varsplit}) amounts to finding a
saddle-point of the augmented Lagrangian.\footnote{The augmented Lagrangian is
in fact the classical Lagrangian~\citep[see][]{boyd} of the following
optimization problem which is equivalent to Eq.~(\ref{eq:varsplit}):
\begin{displaymath}
   \min_{\w \in \Real^p, (\z^g \in \Real^{|g|})_{g \in \GG} } f(\w) + \lambda
\sum_{g \in \GG} \eta_g \|\z^g\| + \frac{\gamma}{2}\|\z^g-\w_g\|_2^2  \st  \forall g \in \GG,~~ \z^g = \w_g.
\end{displaymath}
}
The ADMM algorithm finds such a saddle-point by iterating between the minimization of $\L$ with respect to each primal variable, keeping the other ones fixed,
and gradient ascent steps with respect to the dual variables. More precisely, it can be summarized~as: 
\begin{enumerate}
   \item Minimize $\L$ with respect to $\w$, keeping the other variables fixed. \label{admm:step1}
   \item Minimize $\L$ with respect to the $\z^g$'s, keeping the other variables fixed. The solution can be obtained in closed form: for all $g$ in $\G$, $\z^g \leftarrow \text{prox}_{\frac{\lambda\eta_g}{\gamma}\|.\|}[\w_g-\frac{1}{\gamma}\nu^g]$.
   \item Take a gradient ascent step on $\L$ with respect to the $\nu^g$'s: $\nu^g \leftarrow \nu^g + \gamma( \z^g - \w_g )$.
  \item Go back to step~\ref{admm:step1}.
\end{enumerate}
Such a procedure is guaranteed to converge to the desired solution for all
value of $\gamma  > 0$ (however, tuning $\gamma$ can greatly influence the
convergence speed), but solving efficiently step~\ref{admm:step1} can be difficult.
To cope with this issue, we propose two variations exploiting assumptions \textbf{(A)} and~\textbf{(B)}.
\subsubsection{Splitting the Loss Function $f$} \label{subsec:loss_split}
We assume condition \textbf{(A)}---that is, we have
$f(\w)=\sum_{i=1}^n \tilde{f}_i(\w)$. 
For example, when $f$ is the square loss function $f(\w)=\frac{1}{2}\|\y-\X\w\|_2^2$, 
where $\X$ in $\Real^{n \times p}$ is a design matrix and $\y$ is in $\Real^n$,
we would define for all $i$ in $\{1,\ldots,n\}$ the functions $\tilde{f}_i: \Real \to \Real$
such that $\tilde{f}_i(\w)\defin \frac{1}{2}(\y_i-\x_i^\top\w)^2$, where $\x_i$ is the $i$-th row of $\X$.

We now introduce new variables $\v^i$ in
$\Real^{p}$ for $i=1,\ldots,n$, and replace $f(\w)$ in Eq.~(\ref{eq:varsplit})
by  $\sum_{i=1}^n \tilde{f}_i(\v^i)$, with the additional constraints that
$\v^i=\w$. The resulting equivalent optimization problem can now be tackled using the
ADMM algorithm, following the same methodology presented above. It is easy to
show that every step can be obtained efficiently, as long as one knows how to 
compute the proximal operator associated to the functions ${\tilde{f}_i}$ in closed form.
This is in fact the case for the square and hinge loss functions, where~$n$ is
the number of training points. The main problem of this strategy is the
possible high memory usage it requires when $n$ is large.
\subsubsection{Dealing with the Design Matrix}\label{subsec:lin_split}
If we assume condition \textbf{(B)}, another possibility consists of introducing a
new variable~$\v$ in $\Real^n$, such that one can replace the function
$f(\w)=\tilde{f}(\X\w)$ by $\tilde{f}(\v)$ in Eq.~(\ref{eq:varsplit}) with the
additional constraint $\v=\X\w$.  Using directly the ADMM algorithm to solve
the corresponding problem implies adding a term
$\kappab^\top(\v-\X\w)+\frac{\gamma}{2}\|\v-\X\w\|_2^2$ to the augmented
Lagrangian~$\L$, where $\kappab$ is a new dual variable.
The minimization of~$\L$ with respect to $\v$ is now obtained by $\v \leftarrow
\text{prox}_{\frac{1}{\gamma}\tilde{f}}[\X\w-\kappab]$, which is 
easy to compute according to \textbf{(B)}.
However, the design matrix~$\X$ in the quadratic term makes the minimization of
$\L$ with respect to $\w$ more difficult.  
To overcome this issue, we adopt a
strategy presented by~\citet{zhang2}, which replaces at iteration $k$ the quadratic term
$\frac{\gamma}{2}\|\v-\X\w\|_2^2$ in the augmented Lagrangian by an additional proximity term: $\frac{\gamma}{2}\|\v-\X\w\|_2^2+\frac{\gamma}{2}\|\w-\w^k\|_\Q^2$,
where $\w^k$ is the current estimate of $\w$, and $\|\w-\w^k\|_\Q^2=(\w-\w^k)^\top \Q(\w-\w^k)$,
where $\Q$ is a symmetric positive definite matrix. By choosing $\Q \defin \delta \I - \X^\top\X$, with $\delta$ large enough,
minimizing $\L$ with respect to~$\w$ becomes simple, while convergence to the solution is still ensured.
More details can be found in~\citet{zhang2}.

\section{Applications and Experiments} \label{sec:exp}
In this section, we present various experiments demonstrating the applicability
and the benefits of our methods for solving large-scale sparse and structured
regularized problems.

\subsection{Speed Benchmark}
We consider a structured sparse decomposition problem with overlapping groups
of $\ell_\infty$-norms, and compare the proximal gradient algorithm
FISTA~\citep{beck} with our proximal operator presented in
Section~\ref{sec:optim_prox} (referred to as ProxFlow), two variants of proximal splitting methods,
(ADMM) and (Lin-ADMM) respectively presented in Section~\ref{subsec:loss_split}
and \ref{subsec:lin_split}, and two generic optimization techniques,
namely a subgradient descent (SG) and an interior point method,\footnote{In our
simulations, we use the commercial software \texttt{Mosek},
\texttt{http://www.mosek.com/}} on a regularized linear regression problem.
SG, ProxFlow, ADMM and Lin-ADMM are implemented in \texttt{C++}.\footnote{Our implementation of ProxFlow is available at \url{http://www.di.ens.fr/willow/SPAMS/.} }  Experiments are run on a
single-core $2.8$ GHz CPU.  We consider a design matrix $\X$ in $\Real^{n
\times p}$ built from overcomplete dictionaries
of discrete cosine transforms (DCT), which are naturally organized on one-
or two-dimensional grids and display local correlations.  The following
families of groups $\G$ using this spatial information are thus
considered: (1) every contiguous sequence of length $3$ for the
one-dimensional case, and (2) every $3\!\times\!3$-square in the
two-dimensional setting.  We generate vectors~$\y$ in $\R{n}$ according to
the linear model $\y = \X\w_0 + \varepsilonb$, 
where $\varepsilonb \sim \N(0,0.01\|\X\w_0\|_2^2)$.  The vector $\w_0$ has
about $20\%$ percent nonzero components, randomly selected, while
respecting the structure of~$\G$, and uniformly generated in
$[-1,1]$.

In our experiments, the regularization parameter $\lambda$ is chosen to achieve
the same level of sparsity ($20\%$).  For SG, ADMM and Lin-ADMM, some parameters are
optimized to provide the lowest value of the objective function after $1\,000$
iterations of the respective algorithms.  For SG, we take the step size to be equal to $a/(k+b)$, where $k$
is the iteration number, and $(a,b)$ are the pair of parameters selected in
$\{10^{-3},\dots,10\}\!\times\!\{10^2,10^3,10^4\}$. Note that a step size of the form~$a/(\sqrt{t}+b)$ is 
also commonly used in subgradient descent algorithms. In the context of
hierarchical norms, both choices have led to similar results~\citep{jenatton4}.
The parameter~$\gamma$ for ADMM is selected in $\{10^{-2},\ldots,10^{2}\}$.  The parameters
$(\gamma,\delta)$ for Lin-ADMM are selected in $\{10^{-2},\ldots,10^{2}\}
\times \{10^{-1},\ldots,10^8\}$.  For interior point methods, since
problem~(\ref{eq:formulation}) can be cast either as a quadratic (QP) or as a
conic program (CP), we show in Figure~\ref{fig:speed_cmp} the results for both
formulations.
On three problems of different sizes, with $(n,p)\in\{(100,10^3),(1024,10^4),(1024,10^5)\}$, our algorithms ProxFlow, ADMM and Lin-ADMM compare favorably
with the other methods, 
 (see
Figure~\ref{fig:speed_cmp}), except for ADMM in the large-scale setting which
yields an objective function value  similar to that of SG after~$10^4$ seconds.  Among
ProxFlow, ADMM and Lin-ADMM, ProxFlow is consistently better than Lin-ADMM,
which is itself better than ADMM. Note that for the small scale problem,
the performance of ProxFlow and Lin-ADMM is similar.
In addition, note that QP, CP, SG, ADMM and
Lin-ADMM do not obtain sparse solutions, whereas ProxFlow does.\footnote{To reduce the computational cost of this experiment, the curves reported are the results of one single run. 
Similar types of experiments with several runs have shown very small variability~\citep{bach5}.}
\begin{figure}[hbtp]
    \centering
   \includegraphics[width=0.34\textwidth]{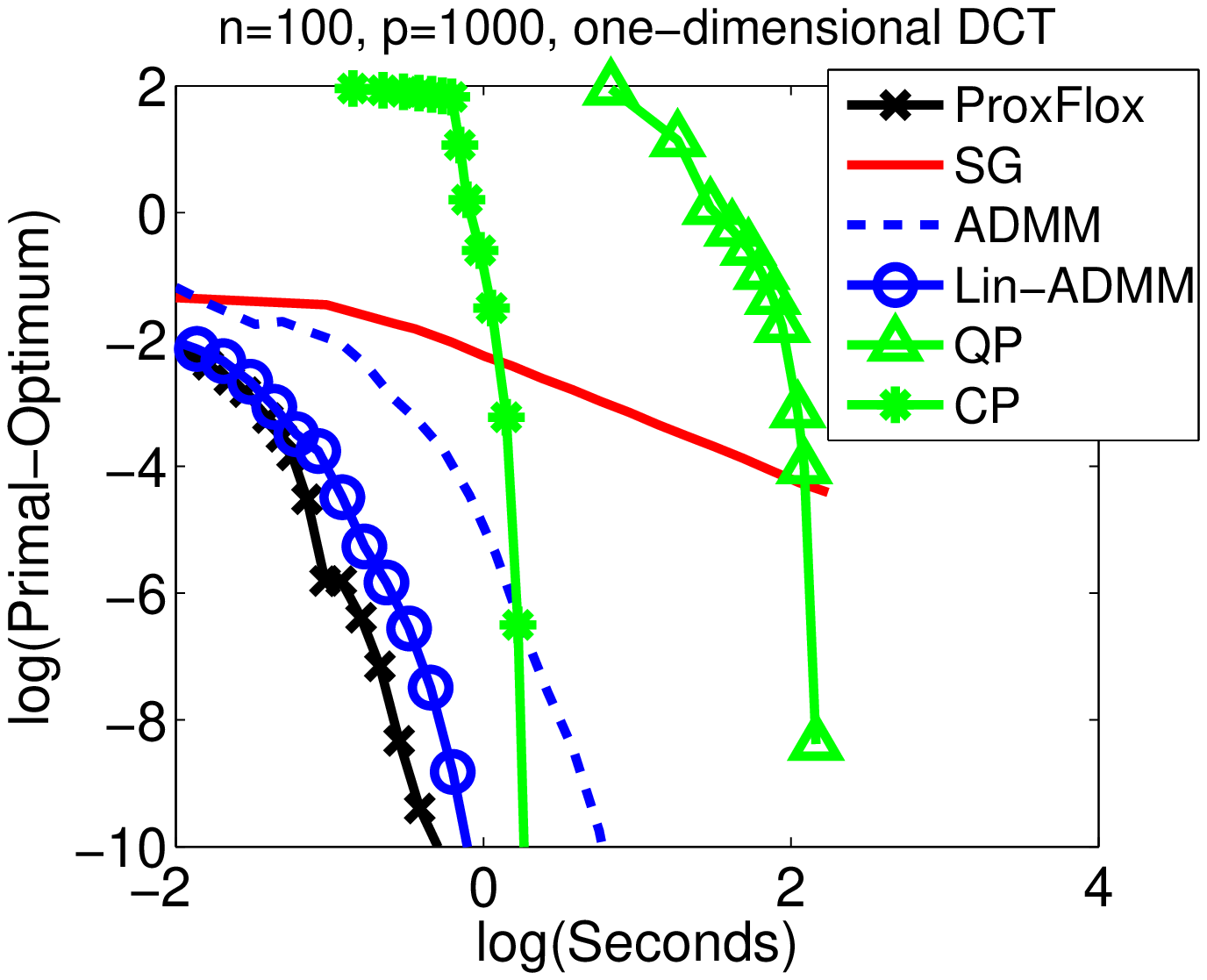} \hfill
   \includegraphics[width=0.32\textwidth]{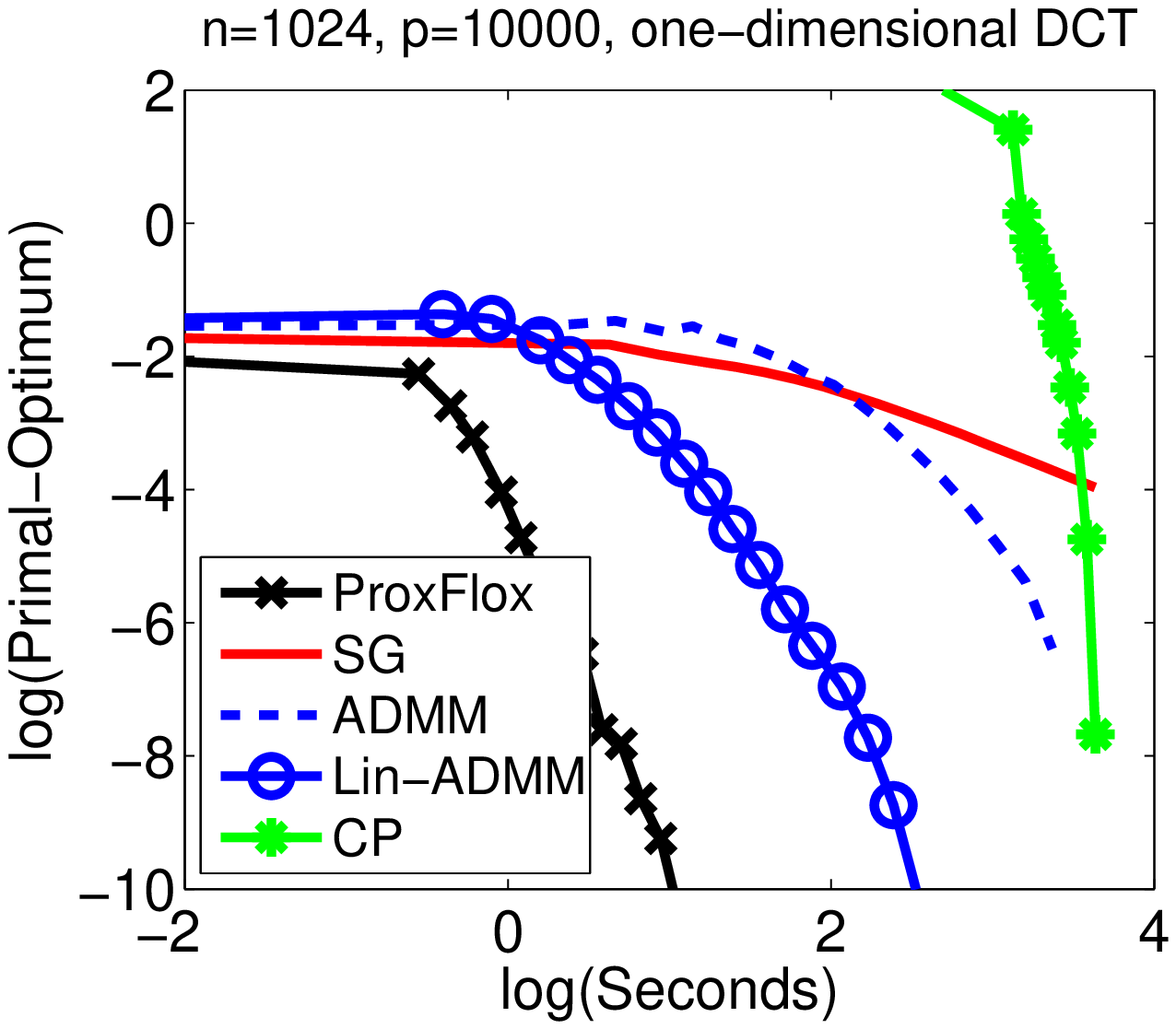} \hfill
   \includegraphics[width=0.32\textwidth]{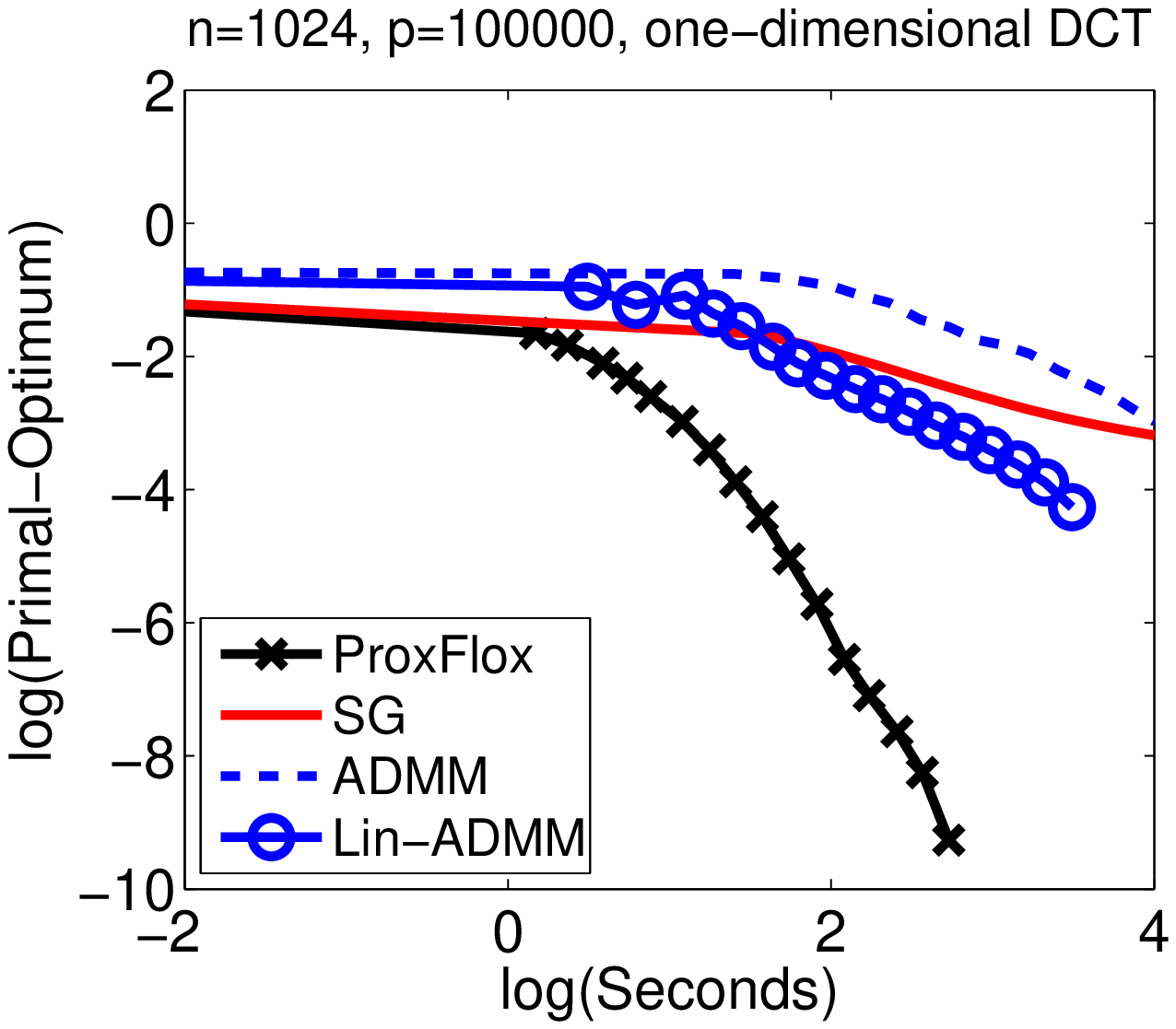} 
   \caption{Speed comparisons: distance to the optimal primal value versus CPU time (log-log scale). Due to the computational burden, QP and CP could not be run on every problem.} 
\label{fig:speed_cmp}
\end{figure}

\subsection{Wavelet Denoising with Structured Sparsity}
We now illustrate the results of Section~\ref{sec:optim_prox}, where a single
large-scale proximal operator ($p \approx 250\,000$) associated to a sum
of~$\ell_\infty$-norms has to be computed. We choose an image denoising task
with an orthonormal wavelet basis, following an experiment similar to one proposed in~\citet{jenatton4}.  Specifically, we consider the following formulation
\begin{equation}
\min_{\w\in\R{p}} \frac{1}{2} \|\y-\X\w\|_2^2+\lambda\Omega(\w),\label{eq:expwav}
\end{equation}
where $\y$ in $\Real^p$ is a noisy input image, $\w$ represents wavelets
coefficients, $\X$ in $\Real^{p \times p}$ is an orthonormal wavelet basis,
$\X\w$ is the estimate of the denoised image, and $\Omega$ is a sparsity-inducing
norm. Since here the basis is orthonormal, solving the decomposition
problem boils down to computing $\w^\star = \text{prox}_{\lambda\Omega}[\X^\top\y]$.
 This makes of Algorithm~\ref{algo:prox} a good candidate to solve it when $\Omega$ is a
sum of $\ell_\infty$-norms. We compare the following candidates for the sparsity-inducing norms
$\Omega$:
\begin{itemize}
\item the $\ell_1$-norm, leading to the wavelet soft-thresholding 
of \citet{donoho2}.
\item a sum of $\ell_\infty$-norms with a hierarchical group structure adapted
to the wavelet coefficients, as proposed in~\citet{jenatton4}. Considering a
natural quad-tree for wavelet coefficients~\citep[see][]{mallat}, this norm
takes the form of Eq.~(\ref{eq:omega}) with one group per wavelet coefficient
that contains the coefficient and all its descendants in the tree. We call this norm $\Omega_{\text{tree}}$.
\item a sum of $\ell_\infty$-norms with overlapping groups representing $2\times2$ spatial neighborhoods in the wavelet domain. This regularization encourages neighboring wavelet coefficients to be set to zero together, which was also exploited in the past in block-thresholding approaches for wavelet denoising~\citep{cai}. We call this norm~$\Omega_{\text{grid}}$.
\end{itemize}
We consider Daubechies3 wavelets~\citep[see][]{mallat} for the matrix $\X$,
use $12$ classical standard test images,\footnote{These images are used in
classical image denoising benchmarks. See~\citet{mairal8}.} and generate
noisy versions of them corrupted by a white Gaussian noise of variance
$\sigma^2$. For each image, we test several values of $\lambda =
2^{\frac{i}{4}}\sigma\sqrt{\log{p}}$, with~$i$ taken in the 
range
$\{-15,-14,\dots,15\}$.
We then keep the parameter $\lambda$
giving the best reconstruction error on average on the $12$ images.
The factor $\sigma\sqrt{\log{p}}$ is a classical heuristic for choosing a reasonable regularization parameter~\citep[see][]{mallat}.
We provide reconstruction results in terms of PSNR in
Table~\ref{table:wave}.\footnote{Denoting by \textrm{MSE} the mean-squared-error
for images whose intensities are between $0$ and $255$, the
\textrm{PSNR} is defined as $\textrm{PSNR}=10\log_{10}( 255^2 /
\textrm{MSE} )$ and is measured in dB. A gain of $1$dB reduces
the \textrm{MSE} by approximately $20\%$.}
Unlike~\citet{jenatton4}, who set all the weights $\eta_g$ in $\Omega$ equal to one, 
we tried exponential weights of the form $\eta_g= \rho^k$, with $k$ being the depth of the group in the wavelet tree, and~$\rho$ is taken in $\{0.25,0.5,1,2,4\}$. As for $\lambda$, the value providing the best reconstruction is kept.
The wavelet transforms in our experiments are computed with the
matlabPyrTools
software.\footnote{\texttt{http://www.cns.nyu.edu/$\sim$eero/steerpyr/}.}
Interestingly, we observe in Table~\ref{table:wave} that the results obtained with $\Omega_{\text{grid}}$
are significantly better than those obtained with $\Omega_{\text{tree}}$, meaning
that encouraging spatial consistency in wavelet coefficients is more effective
than using a hierarchical coding.
\begin{table}[btp]
\label{table:denoise}
\centering
\begin{tabular}{|c|c|c|c|c|c|c|}
\hline
    &     \multicolumn{3}{c|}{PSNR}  &    \multicolumn{3}{c|}{IPSNR vs. $\ell_1$}   \\
\hline
$\sigma$ & $\ell_1$ & $\Omega_{\text{tree}}$ & $\Omega_{\text{grid}}$  & $\ell_1$ & $\Omega_{\text{tree}}$ & $\Omega_{\text{grid}}$ \\
\hline
$5$ & 35.67 & 35.98 & \textbf{36.15} & $ 0.00\pm .0 $ & $0.31 \pm .18$ & $\mathbf{0.48 \pm .25}$ \\
\hline
$10$ & 31.00 & 31.60 & \textbf{31.88}  & $ 0.00\pm .0 $ & $0.61 \pm .28$  & $\mathbf{0.88 \pm .28}$ \\
\hline
$25$ & 25.68 & 26.77 & \textbf{27.07}  & $ 0.00\pm .0 $ & $1.09 \pm .32$  & $\mathbf{1.38 \pm .26}$ \\
\hline
$50$ & 22.37 & 23.84 & \textbf{24.06}  & $ 0.00\pm .0 $ & $1.47 \pm .34$  & $\mathbf{1.68 \pm .41}$ \\
\hline
$100$ & 19.64 & 21.49 & \textbf{21.56}  & $ 0.00\pm .0 $ & $1.85 \pm .28$  & $\mathbf{1.92 \pm .29}$ \\
\hline
\end{tabular}
\caption{PSNR measured for the denoising of $12$
standard images when the regularization function is the $\ell_1$-norm,
the tree-structured norm $\Omega_{\text{tree}}$, 
and the structured norm $\Omega_{\text{grid}}$, and improvement in PSNR compared to the $\ell_1$-norm (IPSNR). 
Best results for each level of noise and each wavelet type are in bold. 
The reported values are averaged over $5$ runs with different noise realizations. 
}
\label{table:wave}
\end{table}
We also note that our approach is relatively fast, despite the high dimension
of the problem.  Solving exactly the proximal problem with $\Omega_{\text{grid}}$ for
an image with $p=512 \times 512 = 262\,144$ pixels (and therefore approximately
the same number of groups) takes approximately $\approx 4-6$ seconds on a single core of
a 3.07GHz CPU. 

\subsection{CUR-like Matrix Factorization}\label{subsec:cur}
In this experiment, we show how our tools can be used to perform the so-called CUR matrix decomposition~\citep{mahoney2009cur}.
It consists of a low-rank approximation of a data matrix $\X$ in $\Real^{n\times p}$
in the form of a product of three matrices---that is, $\X \approx \Cmat\U\Rmat$.
The particularity of the CUR decomposition lies in the fact that the matrices 
$\Cmat \in \Real^{n\times c}$ and $\Rmat \in \Real^{r\times p}$ are constrained to be respectively a subset of $c$ columns and $r$ rows 
of the original matrix $\X$. The third matrix $\U \in \Real^{c \times r}$ is then given by $\Cmat^+ \X \Rmat^+$, 
where $\A^+$ denotes a Moore-Penrose generalized inverse of the matrix $\A$~\citep{horn1990matrix}.
Such a matrix factorization is particularly appealing when the interpretability of the results matters~\citep{mahoney2009cur}.
For instance, when studying gene-expression datasets, it is easier to gain insight from the selection of 
actual patients and genes, rather than from linear combinations of them.

In~\citet{mahoney2009cur}, CUR decompositions are computed by a sampling procedure based on the singular value decomposition of $\X$. In a recent work,
\citet{bien} have 
shown that \textit{partial} CUR decompositions, i.e., the selection of either rows or columns of $\X$, 
can be obtained by solving a convex program with a group-Lasso penalty.
We propose to extend this approach to the simultaneous selection of both rows and columns of $\X$,
with the following convex problem:
\begin{equation}\label{eq:cur}
 \min_{\W \in \Real^{p\times n}} \frac{1}{2}\|\X - \X\W\X\|_{\text{F}}^2
+ \lambda_{\text{row}} \sum_{i=1}^n \|\W^i\|_\infty
+ \lambda_{\text{col}} \sum_{j=1}^p \|\W_j\|_\infty.
\end{equation}
In this formulation, the two sparsity-inducing penalties controlled by the parameters $\lambda_{\text{row}}$ and $\lambda_{\text{col}}$
set to zero some entire rows and columns of the solutions of problem~(\ref{eq:cur}). 
Now, let us denote by~$\W_{\text{I}\, \text{J}}$ in $\Real^{|\text{I}|\times|\text{J}|}$ 
the submatrix of $\W$ reduced to its nonzero rows and columns, respectively indexed by 
$\text{I} \subseteq\{1,\dots,p\}$ and $\text{J}\subseteq\{1,\dots,n\}$.
We can then readily identify the three components of the CUR decomposition of $\X$, namely
$$
\X\W\X = \Cmat \W_{\text{I}\, \text{J}} \Rmat \approx \X.
$$
Problem~(\ref{eq:cur}) has a smooth convex data-fitting term and brings into play a sparsity-inducing norm 
with overlapping groups of variables (the rows and the columns of $\W$). As a result, 
it is a particular instance of problem~(\ref{eq:formulation}) 
that can therefore be handled with the optimization tools introduced in this paper.
We now compare the performance of the sampling procedure from~\citet{mahoney2009cur} with our proposed sparsity-based approach.
To this end, we consider the four gene-expression datasets \texttt{9\_Tumors}, \texttt{Brain\_Tumors1}, 
\texttt{Leukemia1} and \texttt{SRBCT}, with respective dimensions
$(n,p) \in \{(60,5727),(90,5921),(72,5328),(83,2309)\}$.\footnote{The datasets are freely available at \texttt{http://www.gems-system.org/}.}
In the sequel, the matrix $\X$ is normalized to have unit Frobenius-norm while each of its columns is centered.
To begin with, we run our approach\footnote{More precisely, 
since the penalties in problem~(\ref{eq:cur}) shrink the coefficients of $\W$, we follow a two-step procedure: 
We first run our approach to determine the sets of nonzero rows and columns,
and then compute $\W_{\text{I}\, \text{J}} = \Cmat^+ \X \Rmat^+$.}
over a grid of values for 
$\lambda_{\text{row}}$ and $\lambda_{\text{col}}$ in order to obtain solutions with different sparsity levels, i.e., 
ranging from $|\text{I}|=p$ and $|\text{J}|=n$ down to $|\text{I}|=|\text{J}|=0$.
For each pair of values $[|\text{I}|,|\text{J}|]$, we then apply the sampling procedure from~\citet{mahoney2009cur}.
Finally, the variance explained by the CUR decompositions is reported in Figure~\ref{fig:cur_curve} for both methods.
Since the sampling approach involves some randomness, 
we show the average and standard deviation of the results based on five initializations.
The conclusions we can draw from the experiments match the ones already reported in~\citet{bien} for the partial CUR decomposition.
We can indeed see that both schemes perform similarly. 
However, our approach has the advantage not to be randomized, 
which can be less disconcerting in the practical perspective of analyzing a single run of the algorithm.
It is finally worth being mentioned that the convex approach we develop here is flexible and 
can be extended in different ways. For instance, we can imagine to add further low-rank/sparsity constraints on $\W$
thanks to sparsity-promoting convex regularizations.
 \begin{figure}[h]
    \centering
   \includegraphics[width=0.45\textwidth]{./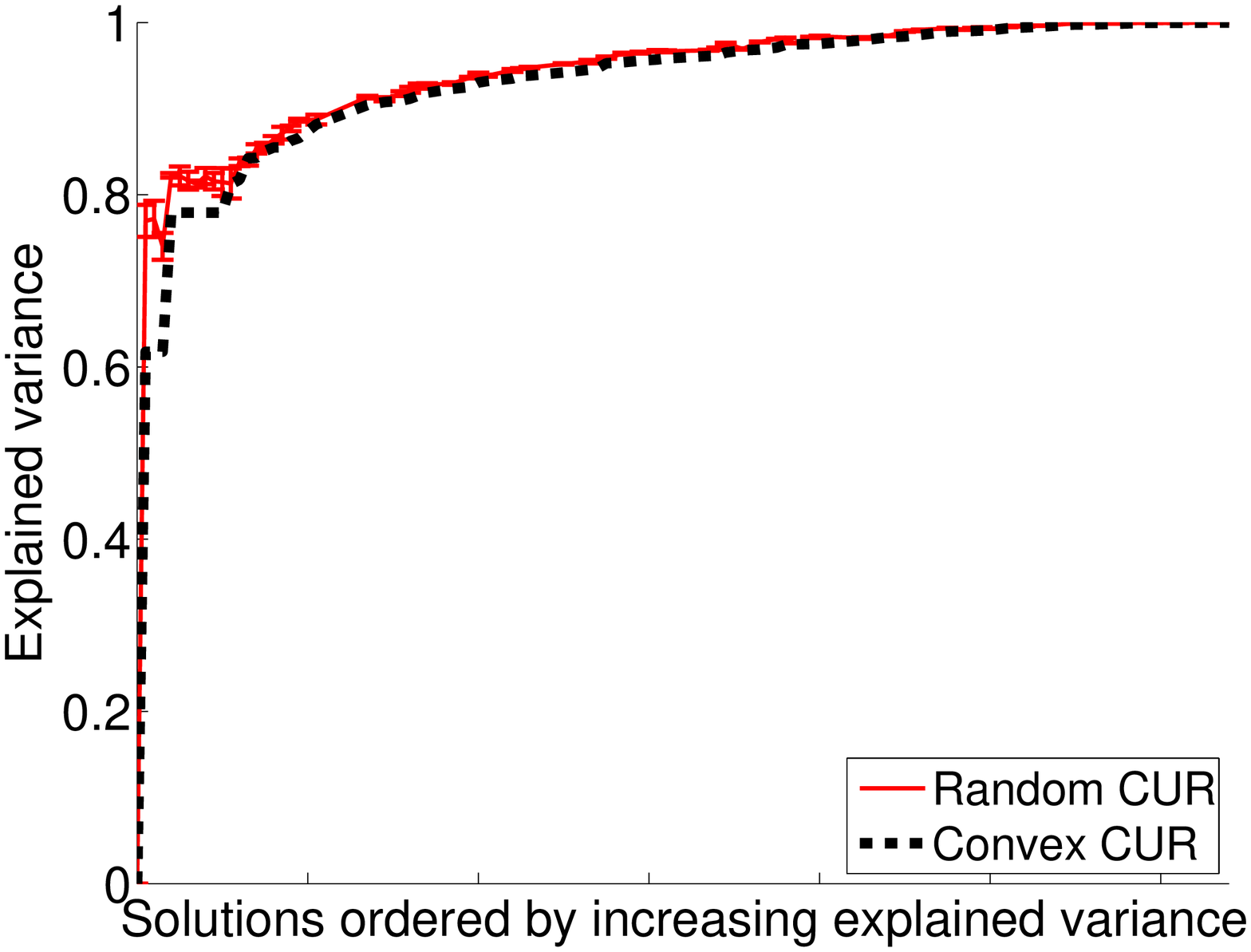} \hfill
   \includegraphics[width=0.45\textwidth]{./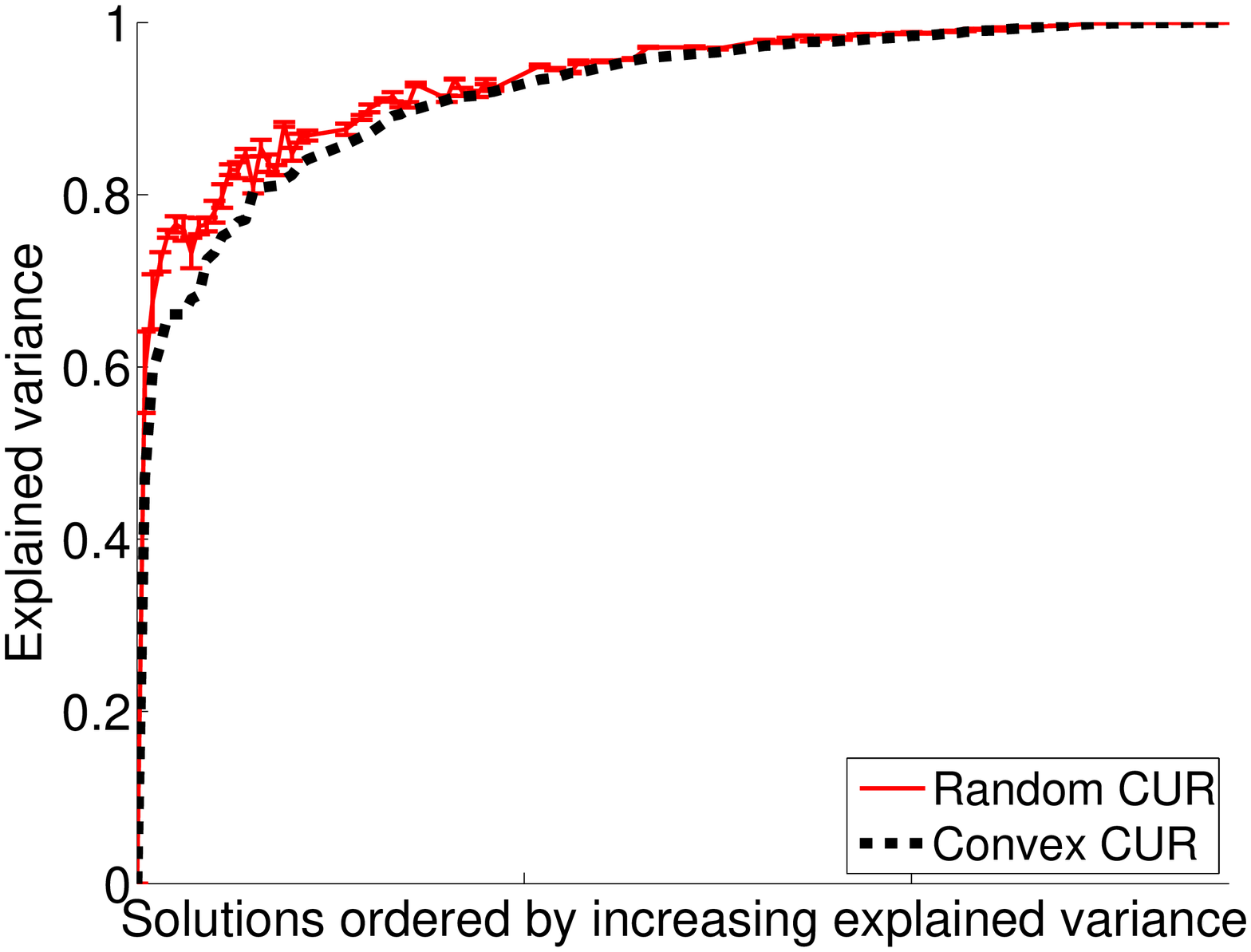}
    \includegraphics[width=0.45\textwidth]{./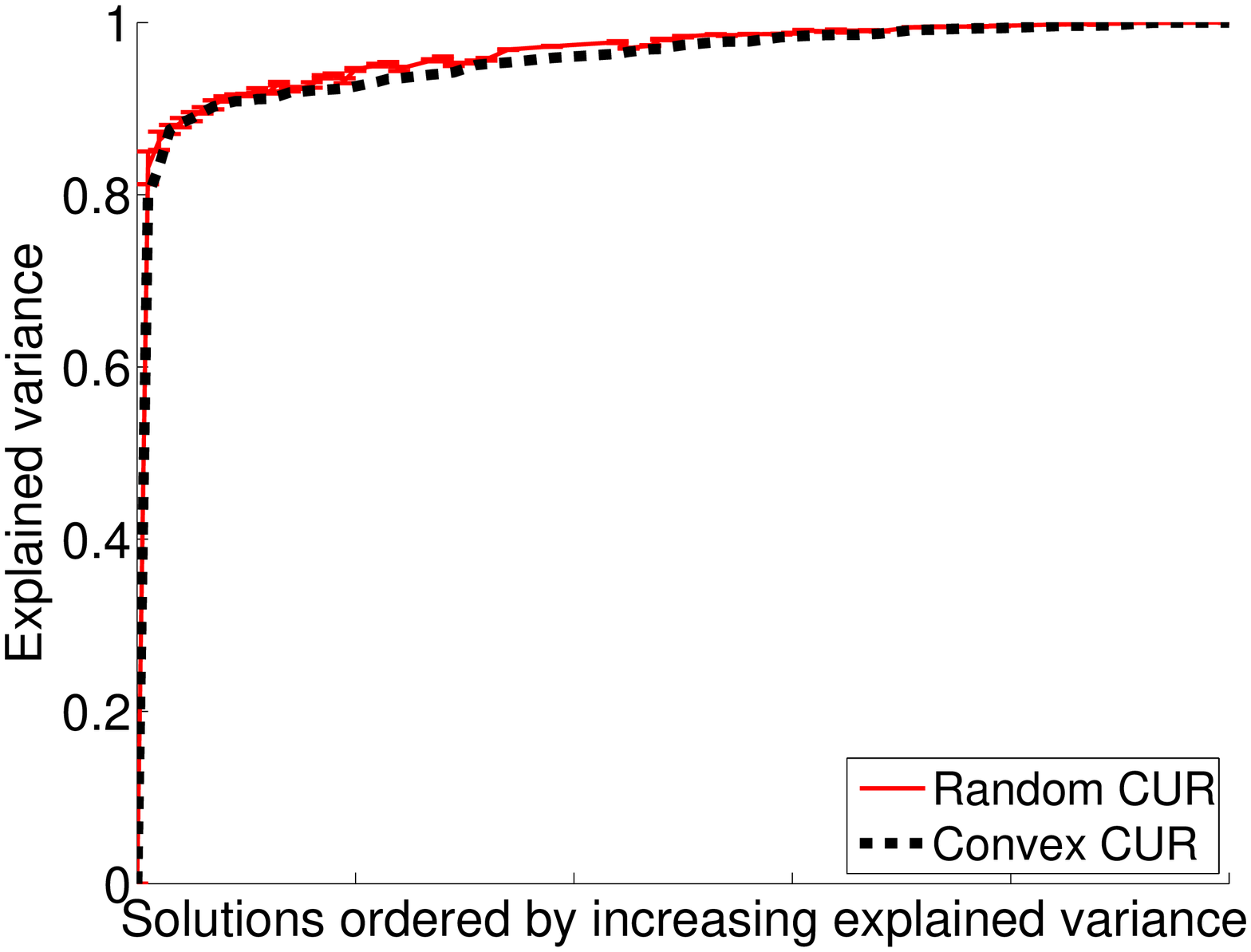} \hfill
   \includegraphics[width=0.45\textwidth]{./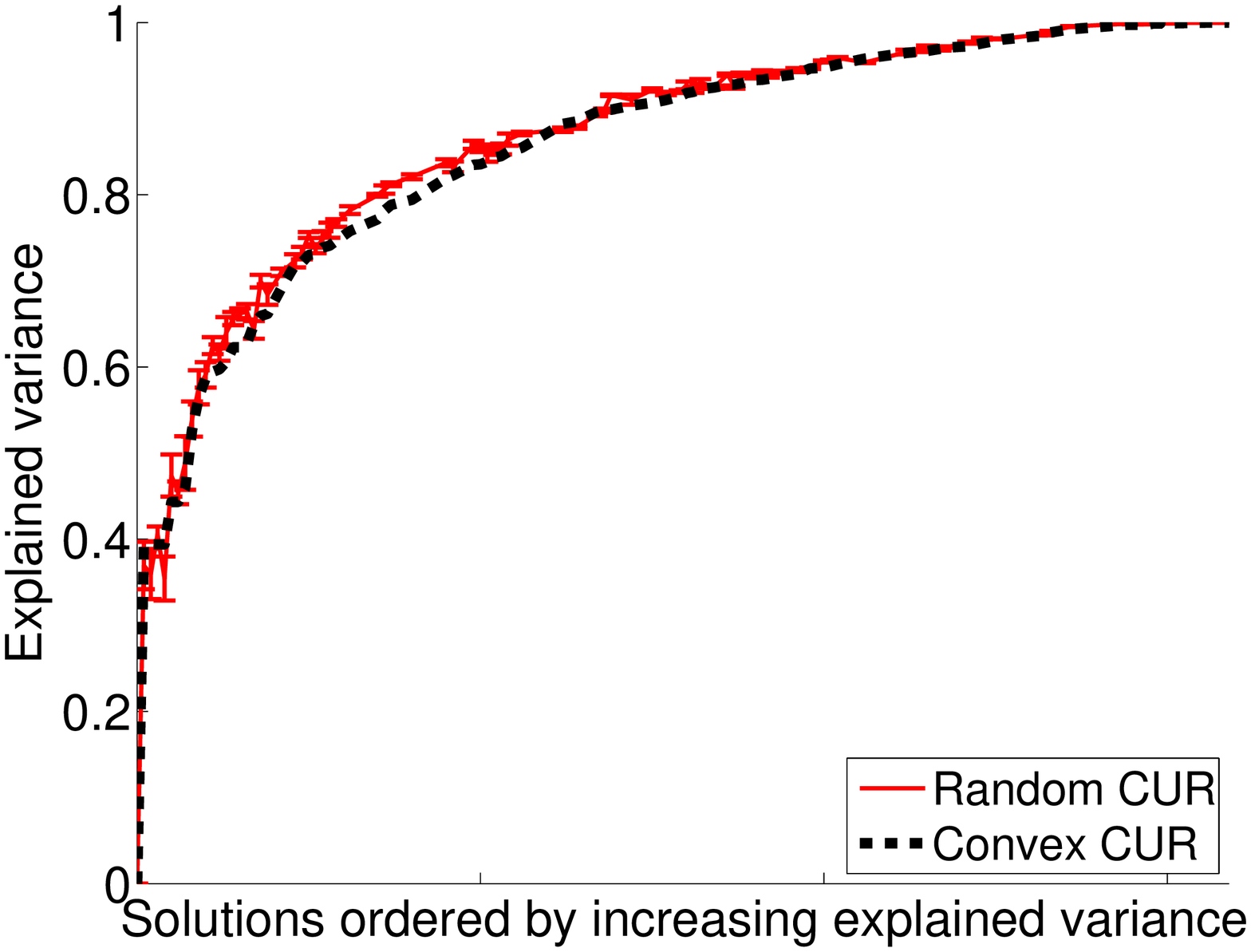}
   \caption{Explained variance of the CUR decompositions obtained for our sparsity-based approach 
   and the sampling scheme from~\citet{mahoney2009cur}. 
   For the latter, we report the average and standard deviation of the results based on five initializations.
   From left to right and top to bottom, the curves correspond to the datasets \texttt{9\_Tumors}, \texttt{Brain\_Tumors1},
   \texttt{Leukemia1} and \texttt{SRBCT}.
   }\label{fig:cur_curve}
 \end{figure}

\subsection{Background Subtraction}
Following~\citet{cehver, huang}, we consider a background subtraction task. Given a sequence of frames from a fixed camera,
we try to segment out foreground objects in a new image. 
If we denote by $\y\in\R{n}$ this image composed of $n$ pixels, 
we model $\y$ as a sparse linear combination of $p$ other images $\X\in\RR{n}{p}$, plus an error term $\e$ in $\R{n}$,
i.e., $\y \approx \X \w + \e$ for some sparse vector $\w$ in $\R{p}$. 
This approach is reminiscent of~\citet{yima} in the context of face recognition, 
where $\e$ is further made sparse to deal with small occlusions.
The term $\X \w$ accounts for \textit{background} parts present in both~$\y$ and~$\X$, while $\e$ contains specific, 
or \textit{foreground}, objects in $\y$.
 The resulting optimization problem is given by
\begin{equation}\label{eq:background_sub}
\min_{\w\in\Real^{p},\e\in\Real^{n}} \frac{1}{2} \|\y \! - \! \X\w \! - \! \e\|_2^2 + \lambda_1\|\w\|_1 + 
\lambda_2 \{ \|\e\|_1 + \Omega(\e) \},\ \text{with}\ \lambda_1,\lambda_2 \geq 0. 
\end{equation}
In this formulation, 
the only $\ell_1$-norm penalty does not take into account the fact that
neighboring pixels in $\y$ are likely to share the same label (background or foreground),
which may lead to scattered pieces of foreground and background regions (Figure~\ref{fig:background_sub}).
We therefore put an additional structured regularization term $\Omega$ on $\e$, 
where the groups in $\G$ are all the overlapping $3\!\times\!3$-squares on the image.
For the sake of comparison, we also consider the regularization~$\tilde{\Omega}$
where the groups are \emph{non-overlapping} $3\!\times\!3$-squares.

This optimization problem can be viewed as an instance of problem~(\ref{eq:formulation}), 
with the particular design matrix $[\X,\ \I]\ \text{in}\ \Real^{n \times (p+n)}$,
defined as the columnwise concatenation of $\X$ and the identity matrix.
As a result, we could directly apply the same procedure as the one used in the other experiments.
Instead, we further exploit the specific structure of problem~(\ref{eq:background_sub}): Notice that
for a fixed vector $\e$, the optimization with respect to $\w$ is a standard Lasso problem 
(with the vector of observations $\y-\e$),\footnote{Since successive frames might not change much,
the columns of $\X$ exhibit strong correlations. Consequently, 
we use the LARS algorithm~\citep{efron} whose complexity is independent of the level of correlation in $\X$.} 
while for $\w$ fixed, 
we simply have a proximal problem associated to the sum of $\Omega$ and the $\ell_1$-norm. 
Alternating between these two simple and computationally inexpensive steps, 
i.e., optimizing with respect to one variable while keeping the other one fixed, 
is guaranteed to converge to a solution of~(\ref{eq:background_sub}).\footnote{More precisely,
the convergence is guaranteed 
since the non-smooth part in~(\ref{eq:background_sub}) is \textit{separable} with respect to $\w$ and~$\e$ \citep{Tseng2001}.
The result from~\citet{bertsekas} may also be applied here, after reformulating~(\ref{eq:background_sub}) 
as a smooth convex problem under separable conic constraints.}
In our simulations, this alternating scheme has led to a significant speed-up compared to the general procedure.

A dataset with hand-segmented images is used to illustrate
the effect of $\Omega$.\footnote{{\scriptsize\texttt{http://research.microsoft.com/en-us/um/people/jckrumm/wallflower/testimages.htm}}}
For simplicity, we use a single regularization parameter, i.e., $\lambda_1=\lambda_2$, chosen to maximize the number of pixels matching the ground truth.
We consider $p=200$ images with $n=57600$ pixels (i.e., a resolution of $120\!\times\!160$, times 3 for the RGB channels).
As shown in Figure~\ref{fig:background_sub}, adding $\Omega$ improves the background subtraction results for the two tested images, 
by removing the scattered artifacts due to the lack of structural constraints of the $\ell_1$-norm, which encodes neither spatial nor color consistency. 
The group sparsity regularization~$\tilde{\Omega}$ also improves upon the~$\ell_1$-norm but introduces block-artefacts corresponding to the non-overlapping group structure.
\begin{figure}[hbtp]
\subfloat[Original frame.]{ \includegraphics[width=0.32\textwidth]{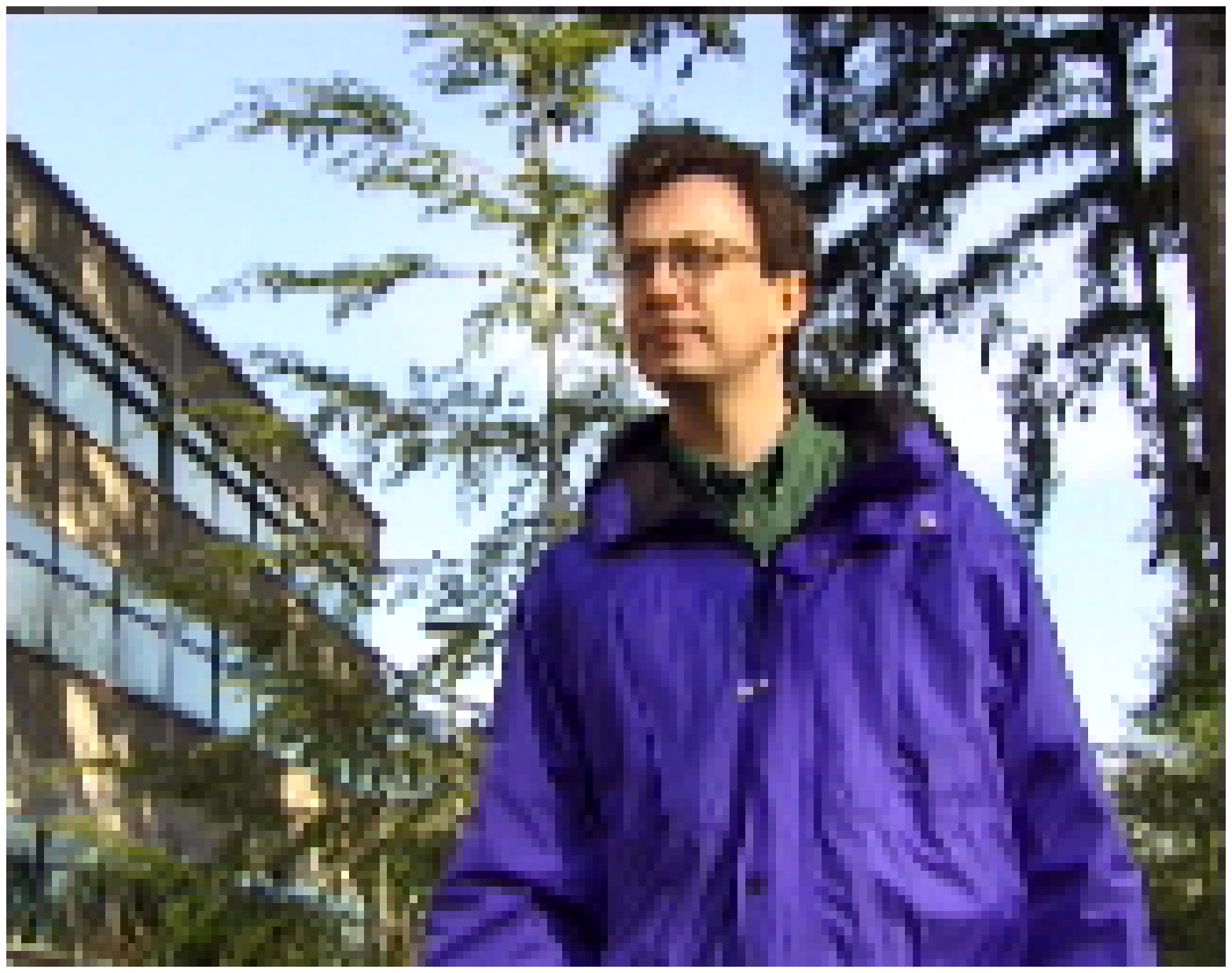} } \hfill
\subfloat[Estimated background with~$\Omega$.]{ \includegraphics[width=0.32\textwidth]{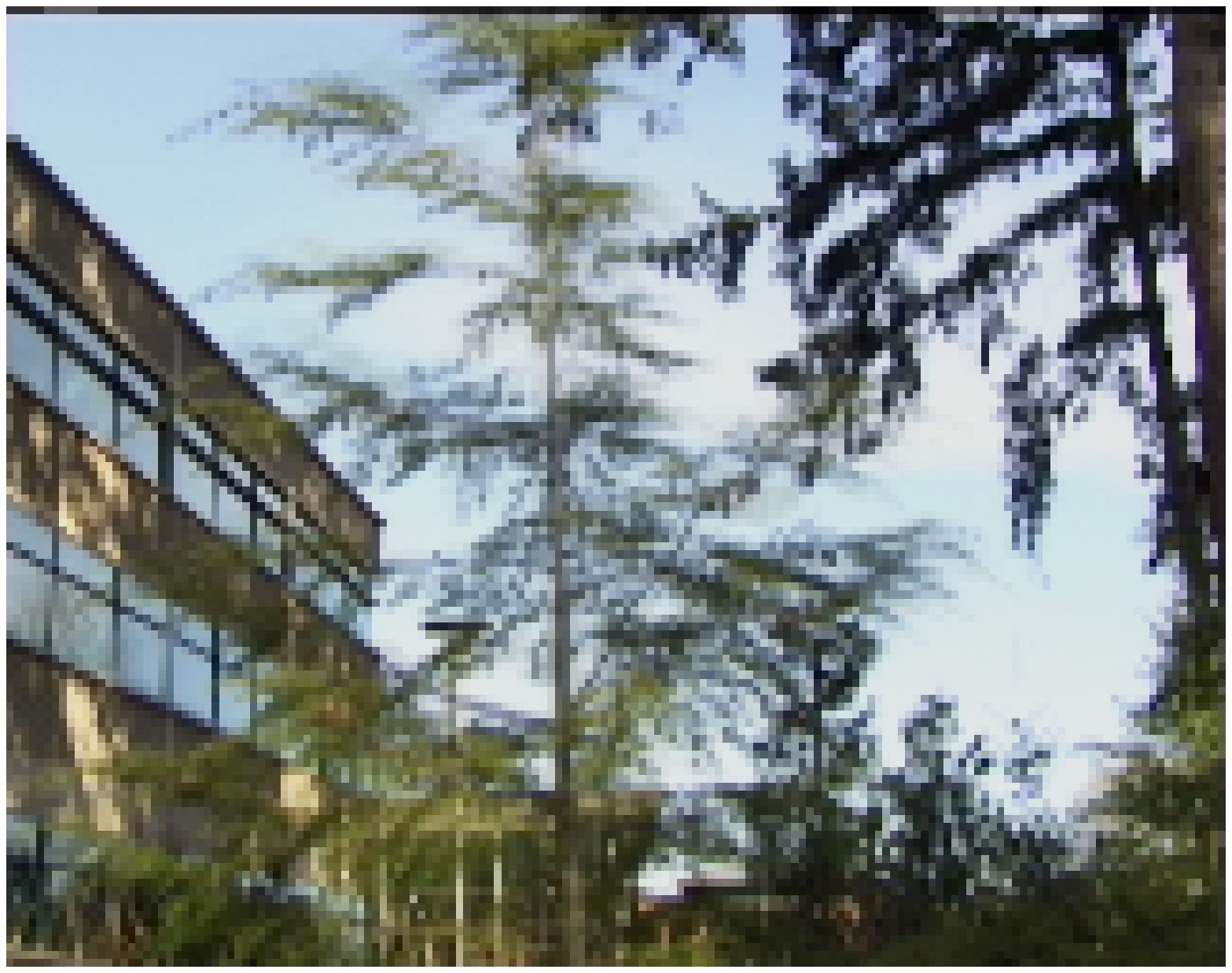} }\hfill
\subfloat[$\ell_1$, $87.1\%$.]{ \includegraphics[width=0.32\textwidth]{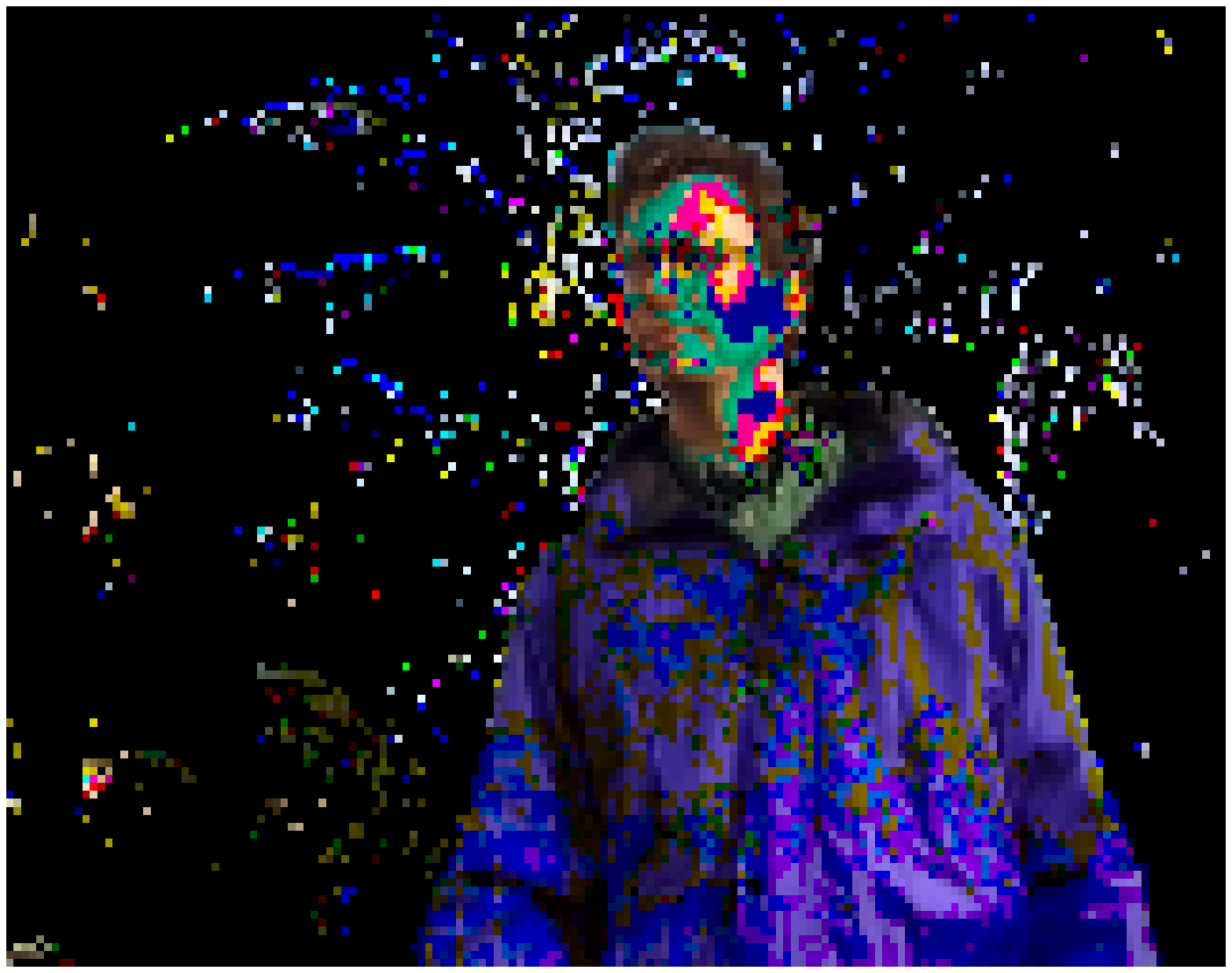} } \\
\subfloat[$\ell_1+\tilde{\Omega}$ (non-overlapping), $96.3\%$.]{ \includegraphics[width=0.32\textwidth]{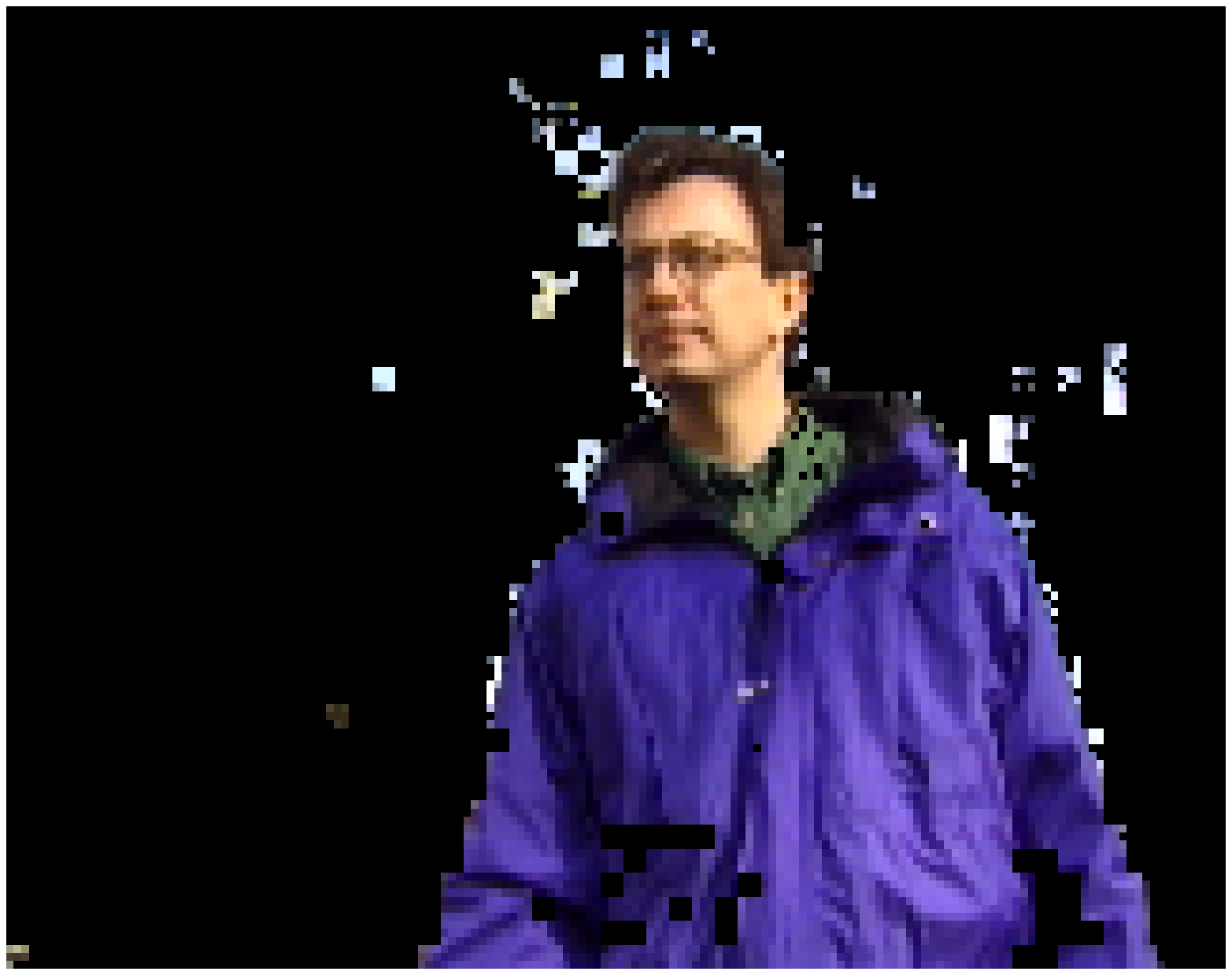} }\hfill
\subfloat[$\ell_1+\Omega$ (overlapping), $98.9\%$.]{ \includegraphics[width=0.32\textwidth]{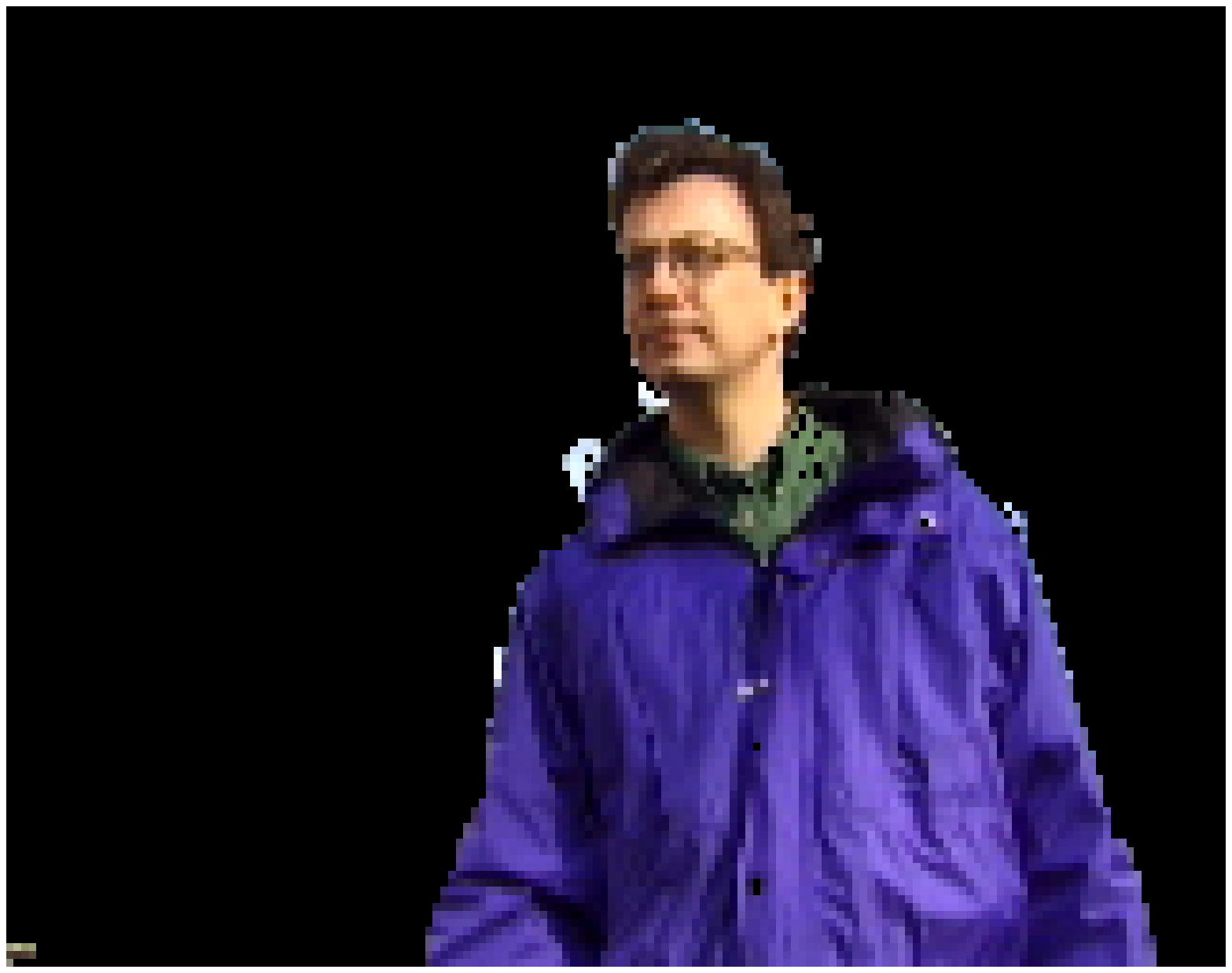} }\hfill
\subfloat[$\Omega$, another frame.]{ \includegraphics[width=0.32\textwidth]{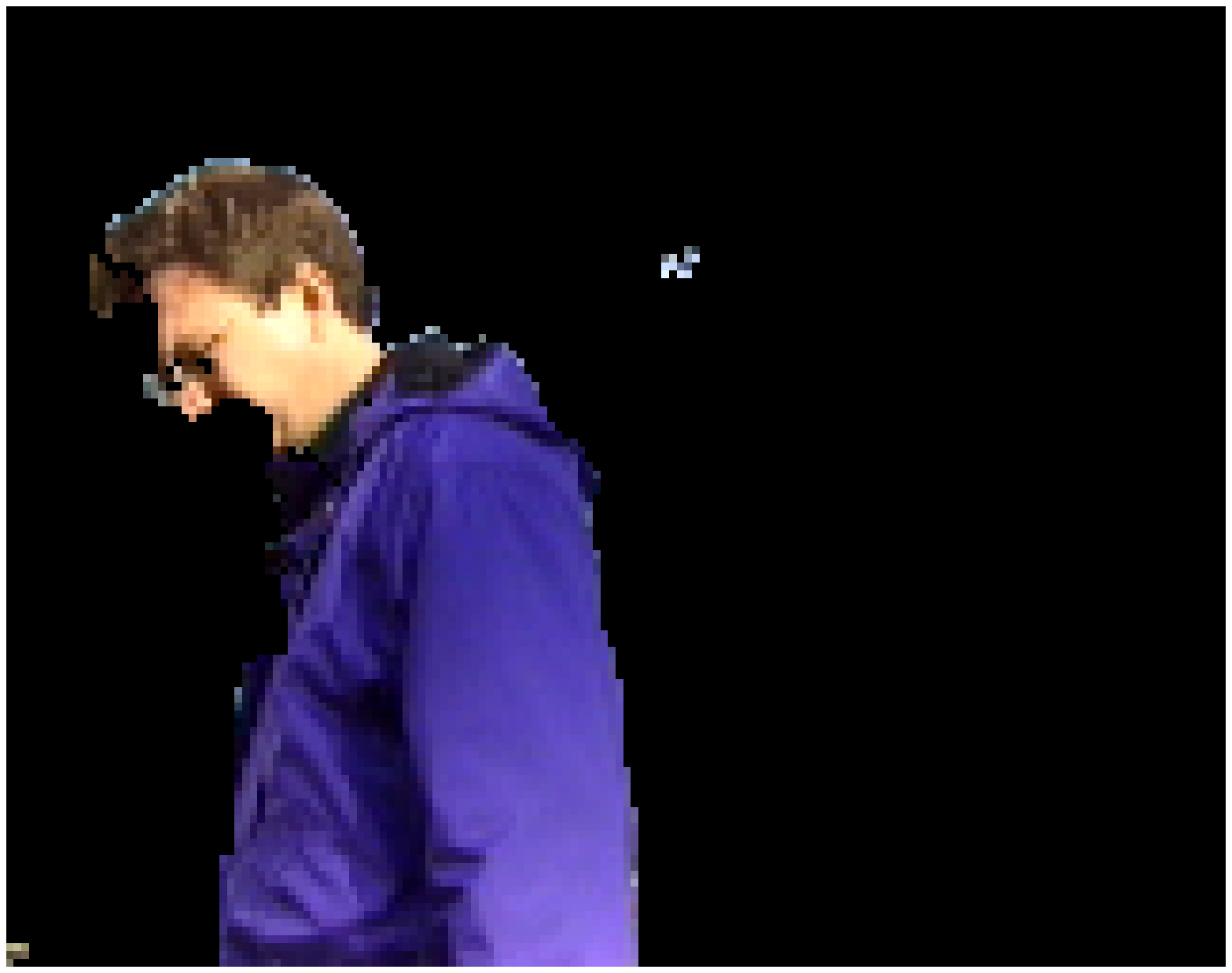} \label{subig:frame1}} \\
\subfloat[Original frame.]{ \includegraphics[width=0.32\textwidth]{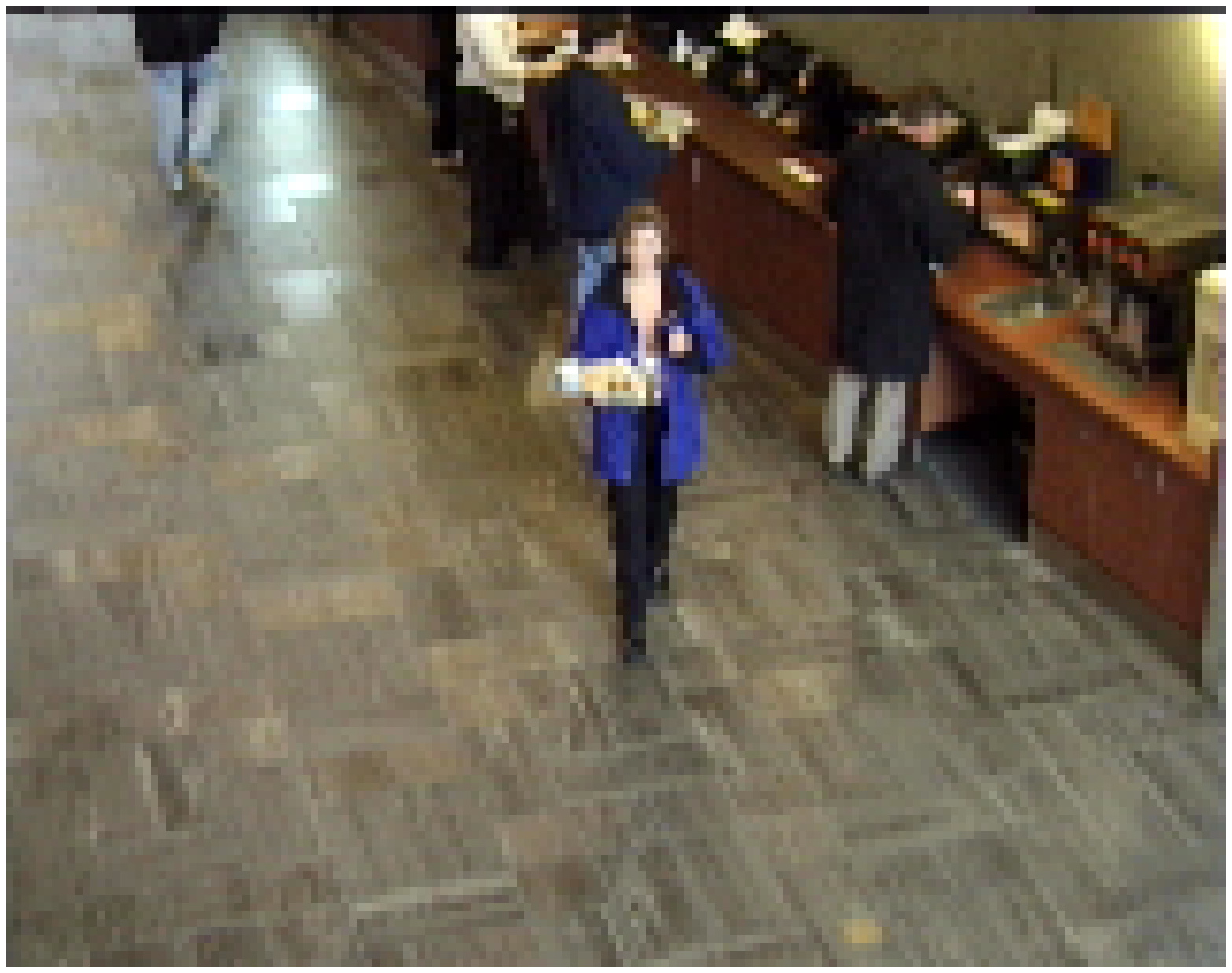} } \hfill
\subfloat[Estimated background with~$\Omega$.]{ \includegraphics[width=0.32\textwidth]{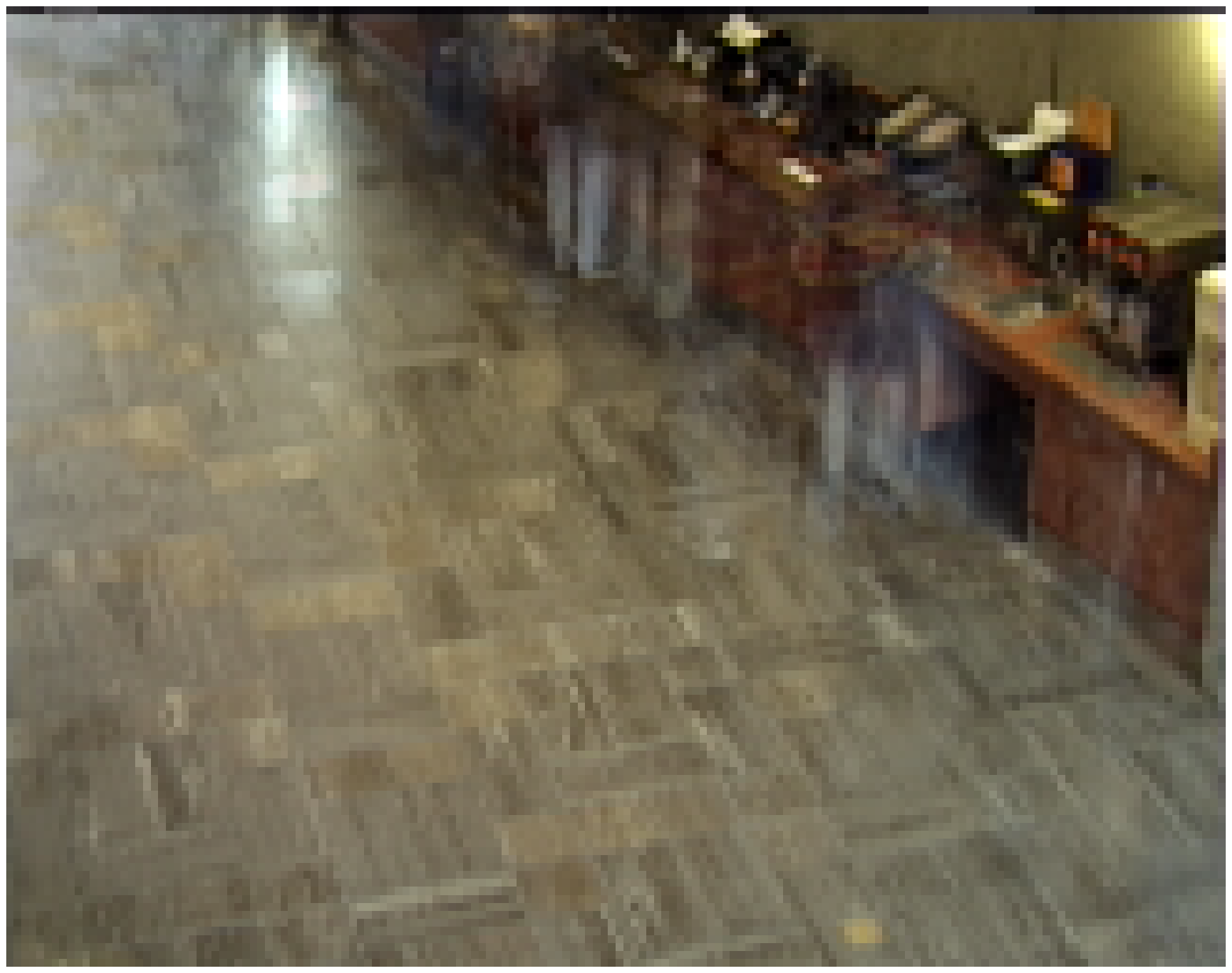} }\hfill
\subfloat[$\ell_1$, $90.5\%$.]{ \includegraphics[width=0.32\textwidth]{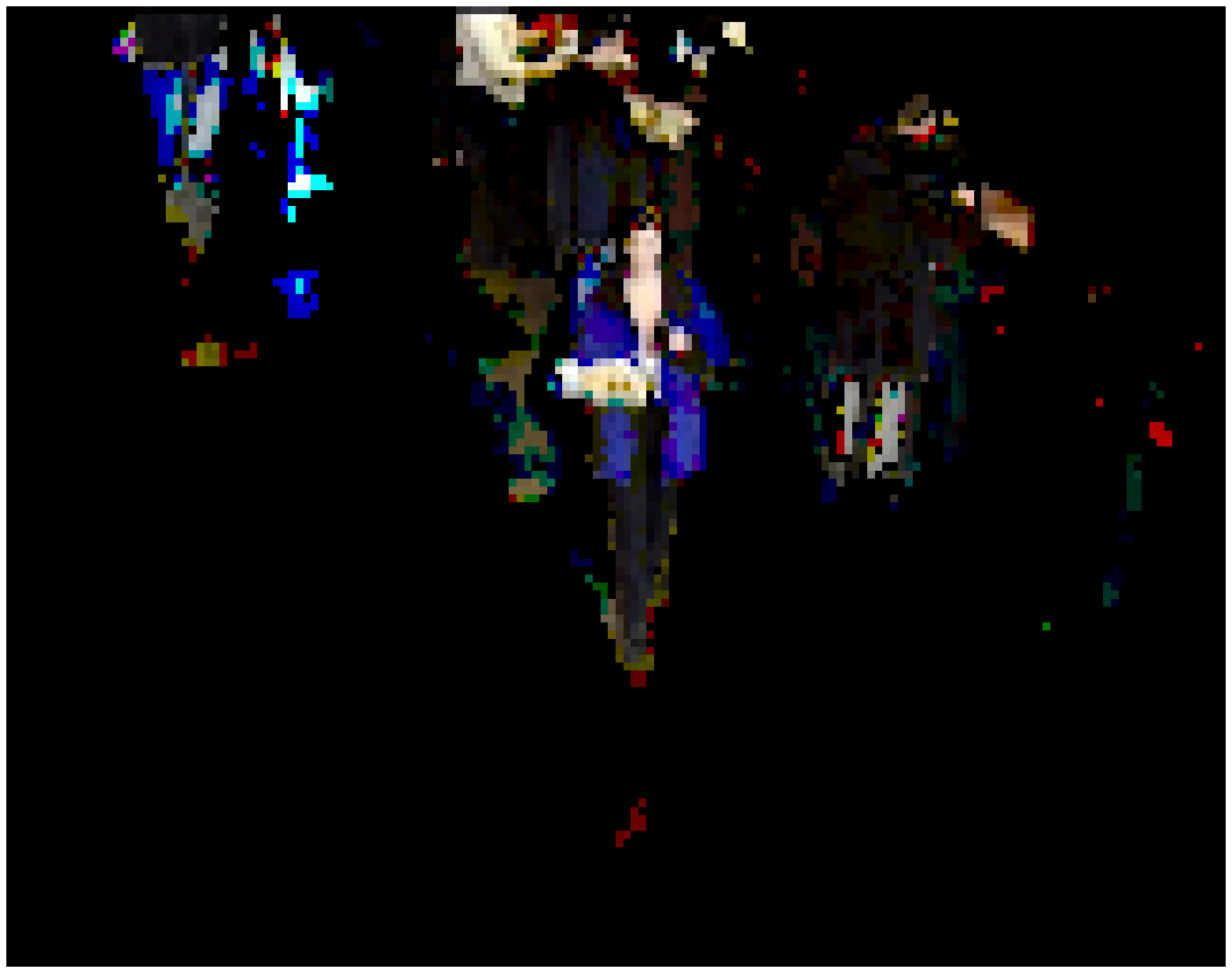} } \\
\subfloat[$\ell_1+\tilde{\Omega}$ (non-overlapping), $92.6\%$.]{ \includegraphics[width=0.32\textwidth]{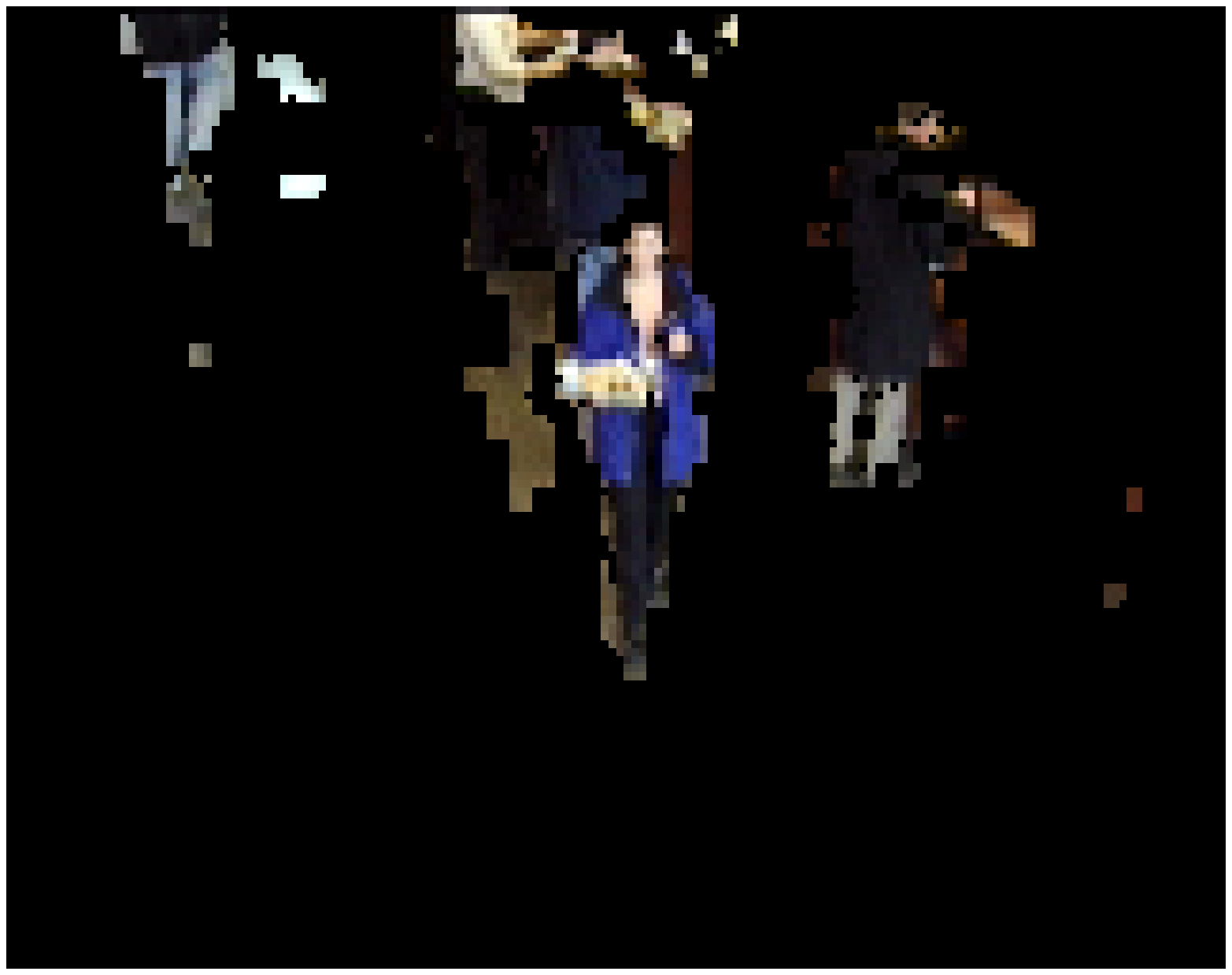} }\hfill
\subfloat[$\ell_1+\Omega$ (overlapping), $93.8\%$.]{ \includegraphics[width=0.32\textwidth]{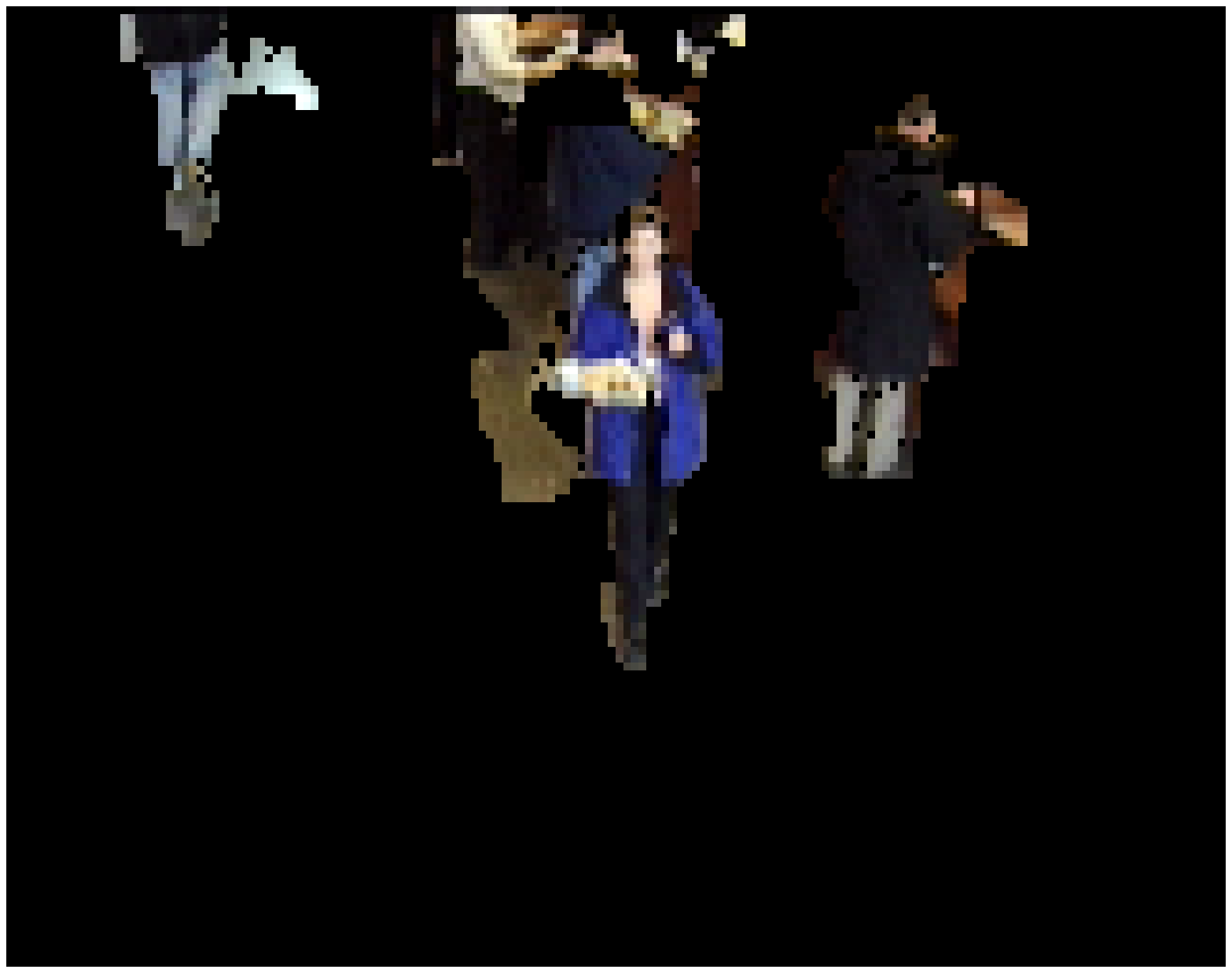} }\hfill
\subfloat[$\Omega$, another frame.]{ \includegraphics[width=0.32\textwidth]{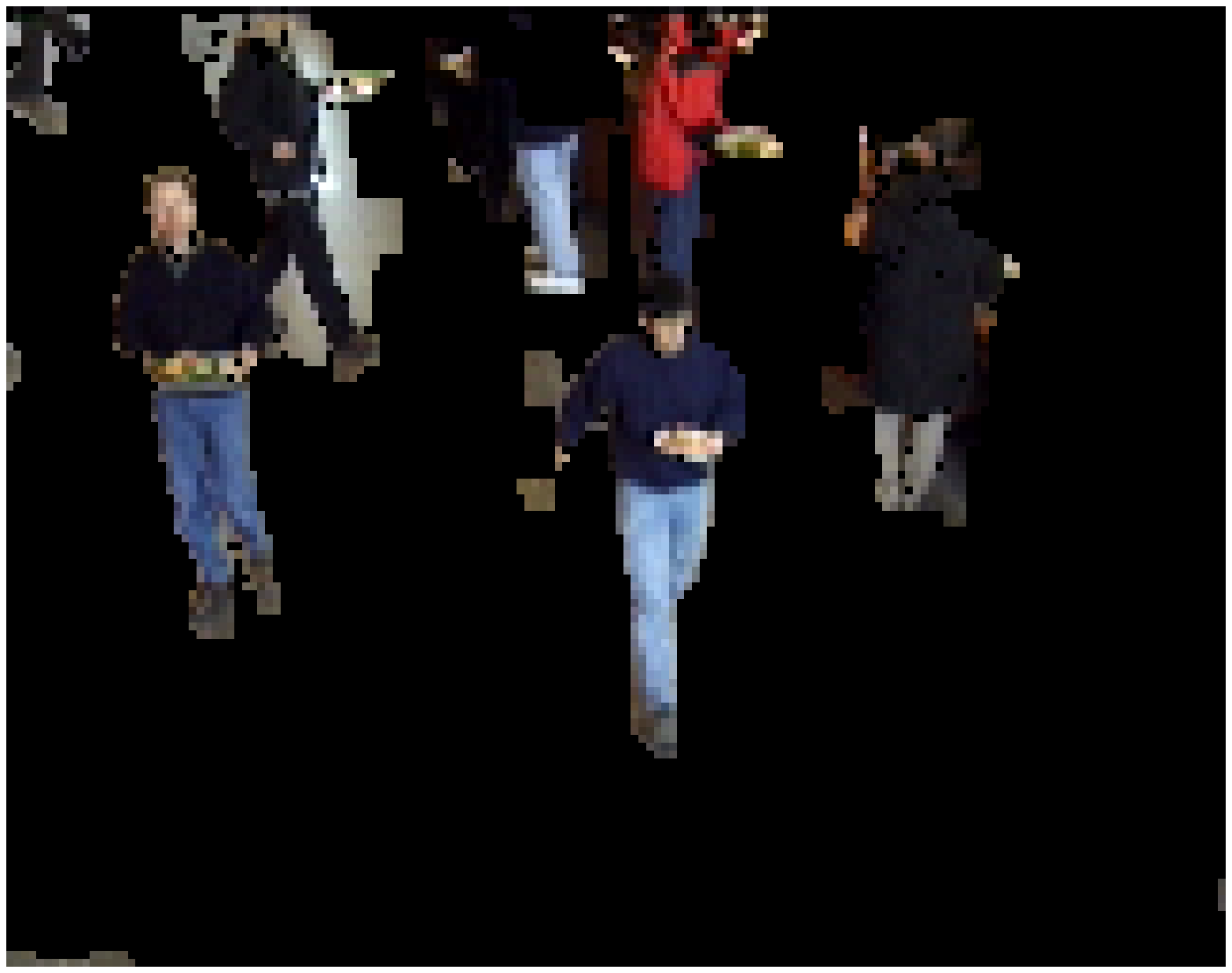} \label{subfig:frame2}} 
\caption{Background subtraction results. For two videos, we present the original image $\y$, the estimated background (i.e., $\X\w$) reconstructed by our method,
and the foreground (i.e., the sparsity pattern of $\e$ as a mask on the original image) detected with $\ell_1$, $\ell_1+\tilde{\Omega}$ (non-overlapping groups) and with $\ell_1+\Omega$.
Figures~\subref{subig:frame1} and~\subref{subfig:frame2} present another foreground found with $\Omega$, on a different image, with the same values of $\lambda_1,\lambda_2$ as for the previous image.
Best seen in color.
}\label{fig:background_sub}
\end{figure}

\subsection{Topographic Dictionary Learning} \label{subsec:topo}
Let us consider a set $ \Y = [\y^1,\dots,\y^n]$ in $\RR{m}{n}$ of $n$ signals
of dimension $m$. The problem of dictionary learning, originally introduced
by~\citet{field2}, is a matrix factorization problem which
aims at representing these signals as linear combinations of 
\textit{dictionary elements} that are the columns of a matrix $\X =
[\x^1,\dots,\x^p]$ in $\RR{m}{p}$.  More precisely, the dictionary $\X$ is
\textit{learned} along with a matrix of \textit{decomposition coefficients} $\W
= [\w^1,\dots,\w^n]$ in~$\RR{p}{n}$, so that $\y^i \approx \X \w^i$ for every
signal~$\y^i$. Typically, $n$ is large compared to $m$ and $p$. In this experiment, 
we consider for instance a database of $n=100\,000$ natural image patches of
size $m=12 \times 12$ pixels, for dictionaries of size $p=400$.
Adapting the dictionary to specific data has proven to be useful in many
applications, including image restoration~\citep{elad,mairal8}, learning image
features in computer vision~\citep{kavukcuoglu2}. The resulting optimization
problem we are interested in can be written
\begin{equation}
    \min_{\X \in \C,\W \in \Real^{p \times n}} \sum_{i=1}^n\frac{1}{2} \|\y^i-\X\w^i\|_2^2 + \lambda \Omega(\w^i),
    \label{eq:dict_topo_learning}
\end{equation}
where $\C$ is a convex set of matrices in $\Real^{m \times p}$ whose columns
have $\ell_2$-norms less than or equal to one,\footnote{Since the quadratic term in
Eq.~(\ref{eq:dict_topo_learning}) is invariant by multiplying $\X$ by a scalar
and $\W$ by its inverse, constraining the norm of~$\X$ has proven to be necessary in
practice to prevent it from being arbitrarily large.}
$\lambda$ is a regularization parameter and $\Omega$ is a sparsity-inducing norm.
When $\Omega$ is the $\ell_1$-norm, we obtain a classical formulation, which is
known to produce dictionary elements that are reminiscent of Gabor-like
functions, when the columns of $\Y$ are whitened natural image
patches~\citep{field2}.

Another line of research tries to put a structure on decomposition coefficients instead
of considering them as independent.  \citet{jenatton3,jenatton4} have for
instance embedded dictionary elements into a tree, by using a
hierarchical norm~\citep{zhao} for $\Omega$.  This model encodes a rule saying
that a dictionary element can be used in the decomposition of a signal only if
its ancestors in the tree are used as well. In the related context of
independent component analysis (ICA), \citet{hyvarinen2} have arranged
independent components (corresponding to dictionary elements) on a
two-dimensional grid, and have modelled spatial dependencies between them. When
learned on whitened natural image patches, this model exhibits ``Gabor-like''
functions which are smoothly organized on the grid, which the authors call a
topographic map.  As shown by~\citet{kavukcuoglu2}, such a result can be
reproduced with a dictionary learning formulation, using a structured norm for
$\Omega$.  Following their formulation, we organize the $p$ dictionary elements
on a $\sqrt{p} \times \sqrt{p}$ grid, and consider $p$
overlapping groups that are $3 \times 3$ or $4 \times 4$ spatial neighborhoods
on the grid (to avoid boundary effects, we assume the grid to be cyclic). We
define $\Omega$ as a sum of $\ell_2$-norms over these groups, since the
$\ell_\infty$-norm has proven to be less adapted for this task.  Another
formulation achieving a similar effect was also proposed by~\citet{garrigues}
in the context of sparse coding with a probabilistic model.

As~\citet{kavukcuoglu2,field2}, we consider a projected stochastic gradient descent
algorithm for learning $\X$---that is, at iteration $t$, we randomly draw one
signal~$\y^t$ from the database $\Y$, compute a sparse code $\w^t = \argmin_{\w
\in \Real^p}\frac{1}{2}\|\y^t-\X\w^t\|_2^2 + \lambda \Omega(\w)$, and
update~$\X$ as follows: $\X \leftarrow \Pi_\C[\X- \rho (\X\w^t-\y^t)\w^{t \top}]$,
where $\rho$ is a fixed learning rate, and $\Pi_\C$ denotes the operator performing
orthogonal projections onto the set~$\C$.  In practice, to further improve the
performance, we use a mini-batch, drawing $500$ signals at eatch iteration
instead of one~\citep[see][]{mairal7}. Our approach mainly differs
from~\citet{kavukcuoglu2} in the way the sparse codes $\w^t$ are obtained.
Whereas~\citet{kavukcuoglu2} uses a subgradient descent algorithm to solve them,
we use the proximal splitting methods presented in Section~\ref{sec:optim_prox2}.
The natural image patches we use are also preprocessed: They are first centered by
removing their mean value (often called DC component), and whitened, as 
often done in the literature~\citep{hyvarinen2,garrigues}.
The parameter~$\lambda$ is chosen such that in average $\|\y^i-\X\w^i\|_2
\approx 0.4\|\y^i\|_2$ for all new patch considered by the algorithm.
Examples of obtained results are shown on Figure~\ref{fig:topodict},
and exhibit similarities with the topographic maps of~\citet{hyvarinen2}.
Note that even though Eq.~(\ref{eq:dict_topo_learning}) is convex with respect
to each variable~$\X$ and~$\W$ when one fixes the other, it is not jointly
convex, and one can not guarantee our method to find a global optimum.
Despite its intrinsic non-convex nature, local minima obtained with
various optimization procedures have been shown to be good enough
for many tasks~\citep{elad,mairal8,kavukcuoglu2}.
\begin{figure}
  \includegraphics[width=0.48\linewidth]{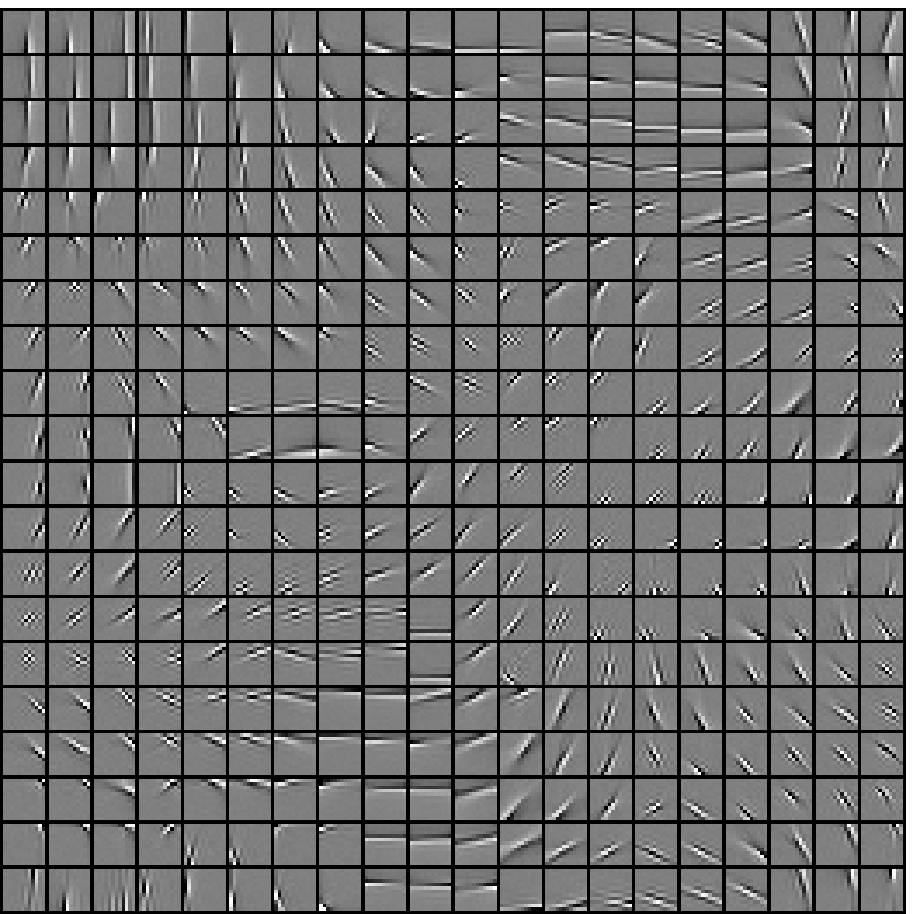} \hfill
  \includegraphics[width=0.48\linewidth]{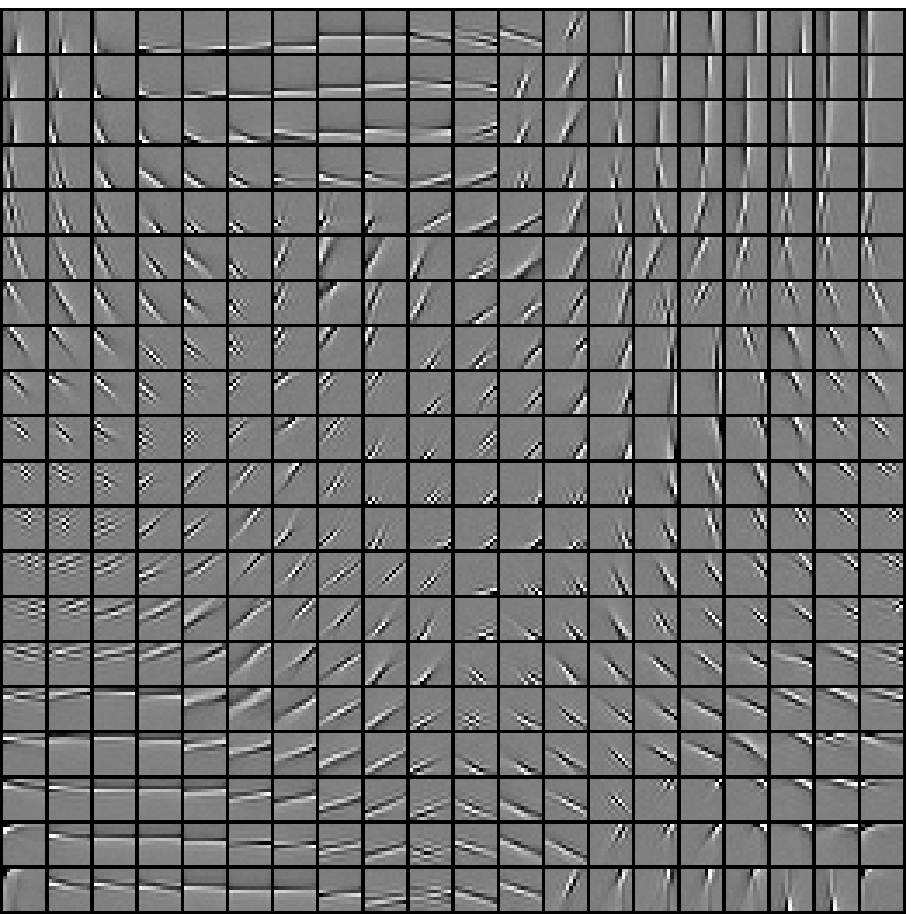} 
\caption{Topographic dictionaries with $400$ elements, learned on a database of $12 \times 12$ whitened natural image patches. Left: with $3 \times 3$ cyclic overlapping groups. Right: with $4 \times 4$ cyclic overlapping groups.}
\label{fig:topodict}
\end{figure}
 
\subsection{Multi-Task Learning of Hierarchical Structures}
As mentioned in the previous section, \citet{jenatton3} have recently proposed to use a
hierarchical structured norm to learn dictionaries of natural image patches.
In~\citet{jenatton3}, the dictionary elements are embedded in a
\textit{predefined} tree $\mathcal{T}$, 
via a particular instance of the
structured norm $\Omega$, which we refer to it as~$\Omega_{\text{tree}}$,
 and call $\GG$ the underlying set of groups.
In this case, using the same notation as in Section~\ref{subsec:topo}, each signal~$\y^i$ admits a sparse decomposition in the
form of a subtree of dictionary elements. 

Inspired by ideas from multi-task learning~\cite{obozinski}, we propose to
learn the tree structure $\mathcal{T}$ by pruning irrelevant parts of a larger
initial tree $\mathcal{T}_0$.  We achieve this by using an additional
regularization term~$\Omega_{\text{joint}}$ across the different decompositions,
so that subtrees of $\mathcal{T}_0$ will \textit{simultaneously} be removed for all signals $\y^i$.
With the notation from Section~\ref{subsec:topo}, the approach of~\citet{jenatton3} is then extended by the following formulation:
\begin{equation}
    \min_{\X\in\mathcal{C}, \W\in\Real^{p \times n}}
    \frac{1}{n}\sum_{i=1}^n\!\Big[\frac{1}{2} \|\y^i-\X\w^i\|_2^2 + \lambda_1 \Omega_{\text{tree}}(\w^i)\Big]\!+\!\lambda_2\Omega_{\text{joint}}(\W), \label{eq:dict_learning}
\end{equation}
where $\W \defin [\w^1,\ldots,\w^n]$ is the matrix of decomposition
coefficients in $\Real^{p \times n}$. The new regularization term operates
on the rows of $\W$ and is defined as $\Omega_{\text{joint}}(\W) \defin
\sum_{g\in\G}\max_{i\in\IntSet{n}}|\w_g^i|$.\footnote{The simplified case where 
$\Omega_{\text{tree}}$ and $\Omega_{\text{joint}}$ are the $\ell_1$-
and mixed $\ell_1/\ell_2$-norms~\cite{yuan} corresponds to~\citet{sprechmann}.}
The overall penalty on $\W$, which results from the combination of $\Omega_{\text{tree}}$ and
$\Omega_{\text{joint}}$, is itself an instance of~$\Omega$ with general overlapping groups, as defined in Eq~(\ref{eq:omega}).

To address problem~(\ref{eq:dict_learning}), we use the same optimization
scheme as \citet{jenatton3}, i.e., alternating between~$\X$ and~$\W$, fixing one
variable while optimizing with respect to the other. 
The task we consider is the denoising of natural image patches, with the same dataset and protocol as~\citet{jenatton3}.
We study whether learning the hierarchy of the dictionary elements improves the denoising performance, 
compared to standard sparse coding (i.e., when $\Omega_{\text{tree}}$ is the $\ell_1$-norm and $\lambda_2=0$) 
and the hierarchical dictionary learning of~\citet{jenatton3} based on predefined trees (i.e., $\lambda_2=0$).
The dimensions of the training set --- $50\,000$ patches of size $8\!\times\!8$ for dictionaries with up to $p=400$ elements --- 
impose to handle extremely large graphs, with $|E|\approx|V|\approx 4.10^7$. 
Since problem~(\ref{eq:dict_learning}) is too large to be solved exactly sufficiently many times to select the regularization parameters $(\lambda_1,\lambda_2)$ rigorously,
we use the following heuristics:
we optimize mostly with the currently pruned tree held fixed (i.e., $\lambda_2=0$), 
and only prune the tree (i.e., $\lambda_2>0$) every few steps on a random subset of $10\,000$ patches.
We consider the same hierarchies as in~\citet{jenatton3}, involving between $30$ and $400$ dictionary elements.
The regularization parameter~$\lambda_1$ is selected on the validation set of $25\,000$ patches, 
for both sparse coding (Flat) and hierarchical dictionary learning (Tree). 
Starting from the tree giving the best performance 
(in this case the largest one, see Figure~\ref{fig:tree}), 
we solve problem~(\ref{eq:dict_learning}) following our heuristics, for increasing values of~$\lambda_2$.
As shown in Figure~\ref{fig:tree}, there is a regime where our approach performs significantly better than the two other compared methods.
The standard deviation of the noise is $0.2$ (the pixels have values in $[0,1]$); no significant improvements were observed for lower levels of noise.
Our experiments use the algorithm of~\citet{beck} based on our proximal operator, with weights $\eta_g$ set to~$1$. We present this algorithm in more details in Appendix~\ref{appendix:fista}.

\begin{figure}[hbtp]
   \centering
   \includegraphics[width=0.5\linewidth]{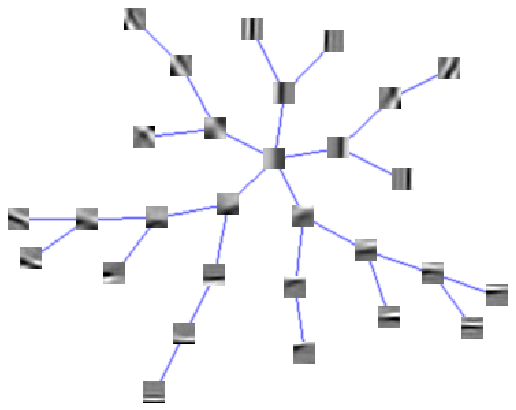}\hfill
   \includegraphics[width=0.5\linewidth]{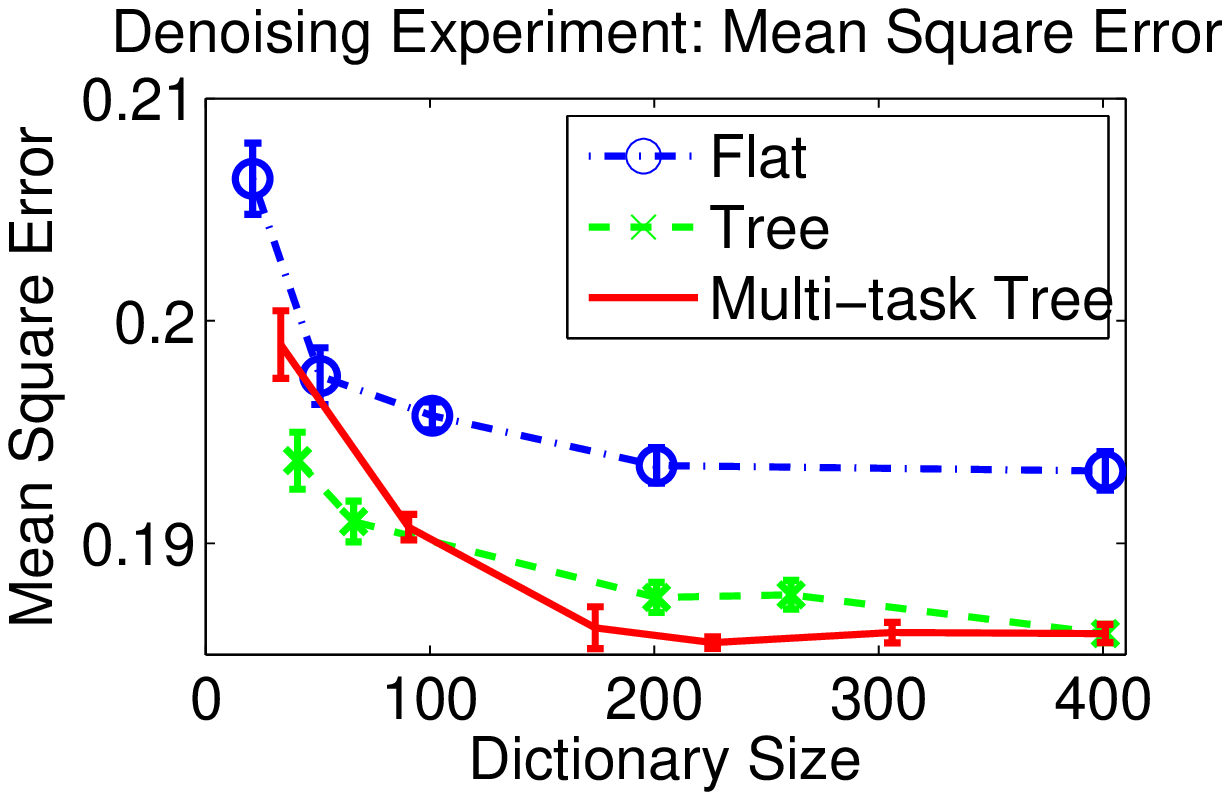} 
   \caption{Left: Hierarchy obtained by pruning a larger tree of $76$ elements. 
   Right: Mean square error versus dictionary size. 
   The error bars represent two standard deviations, based on three runs.} \label{fig:tree}
\end{figure}

\section{Conclusion} \label{sec:ccl}
We have presented new optimization methods for solving sparse structured
problems involving sums of $\ell_2$- or $\ell_\infty$-norms of any
(overlapping) groups of variables.  Interestingly, this sheds new light on
connections between sparse methods and the literature of network flow
optimization.  In particular, the proximal operator for the sum of $\ell_\infty$-norms 
can be cast as a specific form of quadratic min-cost flow problem, for which we proposed
an efficient and simple algorithm.

In addition to making it possible to resort to accelerated gradient methods, 
an efficient computation of the proximal operator offers more generally a certain modularity, 
in that it can be used as a building-block
for other optimization problems. 
A case in point is dictionary learning where
proximal problems come up and have to be solved repeatedly in an inner-loop.
Interesting future work includes the computation of other structured norms such as the one introduced in~\citet{jacob},
or total-variation based penalties, whose proximal operators are also based on minimum cost flow problems~\citep{chambolle}.
Several experiments demonstrate that our algorithm can be applied to
a wide class of learning problems, which have not been addressed before with
convex sparse methods.

\acks{This paper was partially supported by grants from the
Agence Nationale de la Recherche (MGA Project) and
from the European Research Council (SIERRA Project).
In addition, Julien Mairal was supported by the NSF grant SES-0835531 and NSF award CCF-0939370.
The authors would like to thank Jean Ponce for interesting discussions and
suggestions for improving this manuscript, Jean-Christophe Pesquet and Patrick-Louis Combettes for pointing us to the literature of proximal splitting methods, Ryota Tomioka for his advice on 
using augmented Lagrangian methods.}

\appendix

\section{Equivalence to Canonical Graphs}\label{appendix:equivalent}
Formally, the notion of equivalence between graphs can be summarized by
the following lemma:
\begin{lemma}[Equivalence to canonical graphs.] \label{lemma:equivalent}~\\
Let $G=(V,E,s,t)$ be the canonical graph corresponding to a group structure $\GG$.
Let $G'=(V,E',s,t)$ be a graph sharing the same set of
vertices, source and sink as $G$, but with a different arc set $E'$.
We say that $G'$ is equivalent to $G$ if and only if the following conditions hold:
\begin{itemize}
\item Arcs of $E'$ outgoing from the source are the same as in $E$, with the same costs and capacities.
\item Arcs of $E'$ going to the sink are the same as in $E$, with the same costs and capacities. 
\item For every arc $(g,j)$ in $E$, with $(g,j)$ in $V_{gr} \times V_u$, there exists
a unique path in $E'$ from $g$ to $j$ with zero costs and infinite capacities on every
arc of the path.
\item Conversely, if there exists a path in $E'$ between a vertex $g$ in $V_{gr}$
and a vertex $j$ in $V_u$, then there exists an arc $(g,j)$ in $E$.
\end{itemize}
Then, the cost of the optimal min-cost flow on $G$ and $G'$ are the same.
Moreover, the values of the optimal flow on the arcs $(j,t)$, $j$ in $V_u$, are
the same on $G$ and $G'$.
\end{lemma}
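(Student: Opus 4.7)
The plan is to establish that the set of vectors $(\bar{\xi}_j)_{j \in V_u}$ of flow values achievable on arcs $(j,t)$ is the same for $G$ and $G'$. Since the only non-zero costs in either graph lie on these arcs, and the cost is a strictly convex function of $(\bar{\xi}_j)$, equality of the achievable sets immediately implies equality of the optimal costs and of the optimal $(\bar{\xi}_j)$ values. I will therefore prove two inclusions by explicitly converting a feasible flow on one graph to a feasible flow on the other while preserving source-outgoing and sink-incoming flow values.

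\noindent \emph{From $G$ to $G'$.} Given a feasible flow $(\xi^g_j, \bar{\xi}_j)$ on $G$, copy the values on the shared arcs $(s,g)$ and $(j,t)$. For each arc $(g,j) \in E$ with flow $\xi^g_j$, route this amount along the unique zero-cost, infinite-capacity $g$-to-$j$ path in $E'$ that the third equivalence condition furnishes, and sum these routings on every internal arc. Capacities are automatic (infinite on internal arcs, unchanged on the boundary), and flow conservation at each intermediate vertex holds by a local bookkeeping argument: the total flow routed through an internal vertex equals the sum of contributions of the paths traversing it, in equal amounts in and out. The cost is unchanged because only $(j,t)$ arcs contribute and their flows are identical.

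\noindent \emph{From $G'$ to $G$.} Given a feasible flow on $G'$, again copy the values on $(s,g)$ and $(j,t)$. Let $\Phi'$ denote the restriction of the flow to the internal arcs of $E'$. Regard the $V_{gr}$ vertices as sources with supplies equal to the $(s,g)$-flow values and the $V_u$ vertices as sinks with demands equal to $\bar{\xi}_j$; by flow conservation these supplies and demands are consistent with $\Phi'$. Apply a standard flow decomposition to $\Phi'$, writing it as a non-negative sum of path flows from $V_{gr}$ to $V_u$ (internal cycles, if any, carry zero net flow to the sinks and can be discarded). The fourth equivalence condition ensures that every $V_{gr}$-to-$V_u$ path in $E'$ corresponds to an arc $(g,j) \in E$, and the uniqueness clause of the third condition makes this correspondence injective. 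Define $\xi^g_j$ to be the aggregate mass of path flows mapped to $(g,j)$. Then $\sum_{j \in g} \xi^g_j$ equals the $(s,g)$-flow (hence $\leq \lambda \eta_g$) and $\sum_{g \ni j} \xi^g_j = \bar{\xi}_j$, so the resulting assignment is a feasible flow on $G$ with identical boundary values and identical cost.

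\noindent \emph{Main obstacle.} The delicate point is the flow-decomposition step in the second direction: one must confirm that $\Phi'$ admits a decomposition into $g$-to-$j$ path flows, and that this decomposition yields well-defined $\xi^g_j$ regardless of the freedom in choosing it. The first is handled by the classical path/cycle decomposition of flows, after noting that any internal circulation can be removed without altering supplies or demands. The second is handled by the equivalence conditions: because each $g$-to-$j$ path in $E'$ is unique and matches a unique arc of $E$, all legitimate decompositions aggregate to the same $\xi^g_j$ (equivalently, the values $\xi^g_j$ depend only on $\Phi'$ and not on the particular decomposition), so the constructed flow on $G$ is unambiguous and manifestly feasible.
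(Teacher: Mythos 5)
Your proof is correct and follows essentially the same route as the paper's: both directions are handled by converting flows through the arc--path correspondence (routing each $\xi^g_j$ along its unique zero-cost path for $G \to G'$, and invoking the classical path-flow decomposition for $G' \to G$), combined with the observation that the cost depends only on the flows on the arcs $(j,t)$. One small remark: the well-definedness of the aggregated $\xi^g_j$ across all possible decompositions is neither needed (any single valid decomposition already yields a feasible flow on $G$ with the correct boundary values and cost) nor true in general when several $g$-to-$j$ paths share an internal vertex, but this does not affect the validity of your argument.
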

\begin{proof}
We first notice that on both $G$ and $G'$, the cost of a flow on the graph only
depends on the flow on the arcs $(j,t)$, $j$ in $V_u$, which we have denoted by
$\xibbar$ in $E$.

We will prove that finding a feasible flow $\pi$ on $G$ with a cost $c(\pi)$ is
equivalent to finding a feasible flow $\pi'$ on $G'$ with the same cost
$c(\pi)=c(\pi')$.  We now use the concept of \emph{path flow}, which is a flow
vector in $G$ carrying the same positive value on every arc of a directed path
between two nodes of $G$.  It intuitively corresponds to sending a positive
amount of flow along a path of the graph.

According to the definition of graph equivalence introduced in the Lemma, it is
easy to show that there is a bijection between the arcs in $E$, and the paths
in $E'$ with positive capacities on every arc.  Given now a feasible flow $\pi$
in $G$, we build a feasible flow $\pi'$ on $G'$ which is a \textit{sum} of path
flows.  More precisely, for every arc $a$ in $E$, we consider its equivalent
path in $E'$, with a path flow carrying the same amount of flow as $a$.
Therefore, each arc $a'$ in $E'$ has a total amount of flow that is equal to
the sum of the flows carried by the path flows going over $a'$.  It is also
easy to show that this construction builds a flow on $G'$ (capacity and
conservation constraints are satisfied) and that this flow $\pi'$ has the same
cost as $\pi$, that is, $c(\pi)=c(\pi')$.

Conversely, given a flow $\pi'$ on $G'$, we use a classical path flow
decomposition~\citep[see][Proposition 1.1]{bertsekas2}, saying that there
exists a decomposition of $\pi'$ as a sum of path flows in $E'$. Using the
bijection described above, we know that each path in the previous sums
corresponds to a unique arc in $E$. We now build a flow $\pi$ in $G$, by
associating to each path flow in the decomposition of $\pi'$, an arc in $E$
carrying the same amount of flow.  The flow of every other arc in $E$ is set to
zero.  It is also easy to show that this builds a valid flow in $G$ that has
the same cost as $\pi'$.
\end{proof}

\section{Convergence Analysis}\label{appendix:convergence}
We show in this section the correctness of Algorithm~\ref{algo:prox} for computing
the proximal operator, and of Algorithm~\ref{algo:dual_norm} for computing the dual
norm $\Omega^\star$.
\subsection{Computation of the Proximal Operator}
We first prove that our algorithm converges and that it finds the optimal solution of
the proximal problem. This requires that we introduce the optimality conditions for problem~(\ref{eq:dual_problem}) derived from~\citet{jenatton3,jenatton4} since our convergence proof essentially checks that these conditions are satisfied upon termination of the algorithm.

\begin{lemma}[Optimality conditions of the problem~(\ref{eq:dual_problem}) from \citealt{jenatton3,jenatton4}]\label{lemma:opt}~\newline
The primal-dual variables $(\w,\xib)$ are respectively solutions of the primal~(\ref{eq:prox_problem})
and dual problems~(\ref{eq:dual_problem}) if and only if 
the dual variable $\xib$ is feasible for the problem~(\ref{eq:dual_problem}) and
\begin{displaymath}
\begin{split}
    & {\textstyle \w = \u -\sum_{g \in \G} \xib^g},  \\
  &  \forall g \in \G,~~ \left\{ \begin{array}{l}
       \w_g^\top \xib_g^g = \|\w_g\|_\infty \|\xib^g\|_1  ~~\text{and}~~ \|\xib^g\|_1=\lambda \eta_g,  \\
       \text{or}~~ \w_g = 0. 
    \end{array} \right. 
 \end{split}
 \end{displaymath}
\end{lemma}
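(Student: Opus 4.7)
The plan is to derive the stated conditions as the Karush--Kuhn--Tucker (KKT) conditions of a single minimax problem obtained by representing each $\ell_\infty$-norm via its dual. Concretely, for every $g$ in $\G$,
\begin{displaymath}
\lambda\eta_g\|\w_g\|_\infty = \max_{\xib^g\in\R{p}}\; (\xib^g)^\top \w \quad \text{s.t.}\quad \|\xib^g\|_1 \leq \lambda\eta_g,\; \supp(\xib^g)\subseteq g,
\end{displaymath}
which lets me rewrite the primal problem~(\ref{eq:prox_problem}) as the saddle-point problem
\begin{displaymath}
\min_{\w\in\R{p}}\;\max_{\xib\in K}\;\tfrac{1}{2}\|\u-\w\|_2^2 + \sum_{g\in\G}(\xib^g)^\top\w,
\end{displaymath}
where $K\subseteq\RR{p}{|\G|}$ is the feasible set of~(\ref{eq:dual_problem}). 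Since the objective is strongly convex in $\w$, continuous and concave (linear) in $\xib$, and $K$ is convex and compact, Sion's minimax theorem applies and strong duality holds.

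Next I would exchange $\min$ and $\max$ and solve the inner unconstrained minimization in $\w$ by cancelling its gradient, yielding the stationarity relation $\w^\star = \u - \sum_{g\in\G}\xib^{\star g}$. Substituting this back into the saddle-point value and simplifying the quadratic terms would reproduce, up to the additive constant $\tfrac12\|\u\|_2^2$, the dual objective of~(\ref{eq:dual_problem}), thereby confirming that $\xib^\star$ is a maximizer of the dual. This directly gives the first optimality condition stated in the lemma.

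For the second condition, I would invoke the fact that at any saddle point the duality gap vanishes, so
\begin{displaymath}
\sum_{g\in\G}(\xib^{\star g})^\top\w^\star \;=\; \lambda\Omega(\w^\star) \;=\; \sum_{g\in\G}\lambda\eta_g\|\w_g^\star\|_\infty.
\end{displaymath}
Because $\supp(\xib^{\star g})\subseteq g$, each summand on the left equals $(\xib_g^{\star g})^\top\w_g^\star$, which by H\"older's inequality is bounded by $\|\xib^{\star g}\|_1 \|\w_g^\star\|_\infty \leq \lambda\eta_g\|\w_g^\star\|_\infty$. Summing these per-group bounds and comparing with the displayed equality forces both inequalities to be tight \emph{for every} $g\in\G$. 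The H\"older equality gives $\w_g^{\star\top}\xib_g^{\star g} = \|\w_g^\star\|_\infty\|\xib^{\star g}\|_1$, and tightness of the second inequality gives either $\|\xib^{\star g}\|_1 = \lambda\eta_g$ or $\|\w_g^\star\|_\infty = 0$, i.e.\ $\w_g^\star = 0$. This is exactly the alternative in the statement. The converse (sufficiency) is obtained by reversing the argument: if the two conditions hold, one checks that the primal--dual gap is zero, which, by strong duality established above, certifies optimality of both $\w^\star$ and $\xib^\star$.

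The only subtle step, and the one I would be most careful about, is the disjunction in the H\"older case: when $\w_g^\star = 0$ the corresponding constraint on $\xib^{\star g}$ need not be active, so the lemma is correctly phrased as a logical ``or'' and not as a conjunction. Modulo that observation, this is a standard convex duality argument and no further technical obstacle is expected; in particular, the feasibility of $\xib^\star$ is built into the set $K$ and transfers directly from the saddle-point formulation.
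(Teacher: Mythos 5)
Your proof is correct, but note that the paper itself does not prove Lemma~\ref{lemma:opt}: the result is imported verbatim from \citet{jenatton3,jenatton4}, and the paper only uses it as a certificate inside the correctness proof of Algorithm~\ref{algo:prox}. Your self-contained derivation is sound: representing each term $\lambda\eta_g\|\w_g\|_\infty$ by its conjugate over the $\ell_1$-ball supported on $g$ turns Eq.~(\ref{eq:prox_problem}) into a minimax problem over the compact convex feasible set of Eq.~(\ref{eq:dual_problem}); Sion's theorem (or, more simply, standard Lagrangian duality with attainment, since the objective is strongly convex in $\w$ and linear in $\xib$) justifies the exchange of $\min$ and $\max$; the inner minimization yields $\w^\star=\u-\sum_{g\in\G}\xib^{\star g}$ and identifies the outer maximization with the dual problem; and the vanishing gap $\sum_{g\in\G}\bigl[\lambda\eta_g\|\w^\star_g\|_\infty-(\xib^{\star g}_g)^\top\w^\star_g\bigr]=0$, combined with the termwise H\"older bound $(\xib^{\star g}_g)^\top\w^\star_g\leq\|\xib^{\star g}\|_1\|\w^\star_g\|_\infty\leq\lambda\eta_g\|\w^\star_g\|_\infty$, forces tightness group by group, which is exactly the stated disjunction (and you are right that when $\w^\star_g=0$ the constraint $\|\xib^{\star g}\|_1=\lambda\eta_g$ need not be active). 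The sufficiency direction via weak duality is also handled correctly. Your route is in the same spirit as the conic-Lagrangian computation the paper does carry out in Appendix~B for the companion variational formula for $\Omega^*$; its advantage is that it delivers the primal--dual link of Lemma~\ref{lem:dual} and the complementarity conditions in a single pass, whereas the cited works obtain them by writing the KKT conditions of Eq.~(\ref{eq:dual_problem}) directly. No gap.
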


Note that these optimality conditions provide an intuitive view of our
min-cost flow problem. Solving the min-cost flow problem is equivalent to
sending the maximum amount of flow in the graph under the capacity constraints,
while respecting the rule that \emph{the flow coming from a group $g$ should
always be directed to the variables $\u_j$ with maximum residual $\u_j-\sum_{g
\in \G}\xib^g_j$}.
This point can be more formaly seen by noticing that one of the optimality conditions above 
corresponds to the case of equality in the $\ell_1/\ell_\infty$ H\"older inequality.

Before proving the convergence and correctness of our algorithm, we also recall
classical properties of the min capacity cuts, which we
intensively use in the proofs of this paper.  The procedure
\texttt{computeFlow} of our algorithm finds a minimum $(s,t)$-cut of a graph
$G=(V,E,s,t)$, dividing the set $V$ into two disjoint parts $V^+$ and $V^-$.
$V^+$ is by construction the sets of nodes in $V$ such that there exists a
non-saturating path from $s$ to $V$, while all the paths from $s$ to $V^-$ are
saturated. Conversely, arcs from $V^+$ to $t$ are all saturated, whereas there
can be non-saturated arcs from $V^-$ to $t$. Moreover, the following properties,
which are illustrated on Figure~\ref{fig:graph2}, hold
\begin{itemize}
\item There is no arc going from $V^+$ to $V^-$. Otherwise the value of the cut would
be infinite (arcs inside $V$ have infinite capacity by construction of our graph). 
\item There is no flow going from $V^-$ to $V^+$~\citep[see][]{bertsekas2}.
\item The cut goes through all arcs going from $V^+$ to $t$, and all arcs going from $s$ to $V^-$.
\end{itemize}
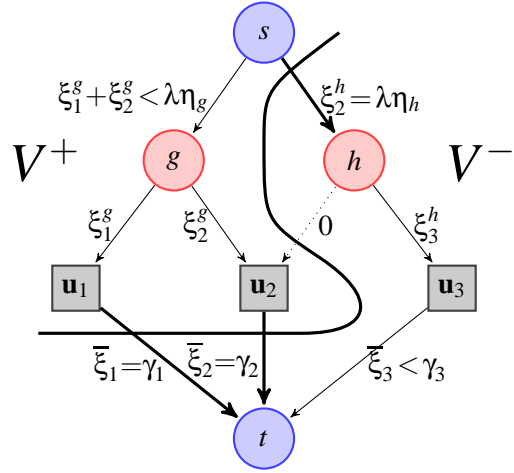
\begin{figure}[hbtp]
\tikzstyle{source}=[circle,thick,draw=blue!75,fill=blue!20,minimum size=8mm]
\tikzstyle{sink}=[circle,thick,draw=blue!75,fill=blue!20,minimum size=8mm]
\tikzstyle{group}=[place,thick,draw=red!75,fill=red!20, minimum size=8mm]
\tikzstyle{var}=[rectangle,thick,draw=black!75,fill=black!20,minimum size=6mm]
\tikzstyle{part}=[minimum size=6mm]
\tikzstyle{every label}=[red]
   \begin{center}
      \begin{tikzpicture}[node distance=1.7cm,>=stealth',bend angle=45,auto]
         \begin{scope}
            \node [source]   (s)                                    {$s$};
            \node [group]    (g)  [below of=s,xshift=-12mm]                      {$g$}
            edge  [pre] node[left] {$\xib^g_1 \!+ \!\xib^g_2 \! < \!\lambda \eta_g$} (s);
            \node [part] (p) [left of=g] {{\huge $V^+$}};
            \node [group]    (h)  [below of=s,xshift=12mm]                      {$h$}
            edge  [pre,very thick] node[right] {$\xib^h_2 \! = \!\lambda \eta_h$} (s);
            \node [part] (p2) [right of=h] {{\huge $V^-$}};
            \node [var] (u2) [below of=g,xshift=12mm]                    {$\u_2$}
            edge  [pre,dotted] node[above, right] {$0$} (h)
            edge  [pre] node[above, left] {$\xib^{g}_2$} (g);
            \node [var] (u1)  [left of=u2, node distance=2.5cm] {$\u_1$}
            edge  [pre] node[above, left] {$\xib^{g}_1$} (g);
            \node [var] (u3) [right of=u2, node distance=2.5cm] {$\u_3$}
            edge  [pre] node[above, right] {$\xib^{h}_3$} (h);
            \node [sink] (si) [below of=u2,node distance=2cm] {$t$}
            edge [pre,very thick] node[above,left,xshift=1mm] {$\xibbar_1 \!\! =\!\! \gammab_1$} (u1)
            edge [pre,very thick] node[above,left,xshift=1mm] {$\xibbar_2\!\! = \!\! \gammab_2$} (u2)
            edge [pre] node[above,right] {$\xibbar_3 \! < \! \gammab_3$} (u3);
            \draw [very thick] (1,0) .. controls (0,-0.75) .. (0,-1.5) .. controls (0,-2.5) .. (0.75,-3) .. controls (1.5,-3.5) and (1.5,-4) .. (0.5,-4) -- (-3,-4);
         \end{scope}
      \end{tikzpicture}
   \end{center}
   \caption{Cut computed by our algorithm. $V^+\! = \! V_u^+ \cup V_{gr}^+$, with $V_{gr}^+\! =\! \{g\}$, $V_{u}^+\! =\! \{ 1,2 \}$,
and $V^- \! =\! V_u^- \cup V_{gr}^-$, with $V_{gr}^-\! =\! \{h\}$, $V_{u}^- \! =\! \{ 3 \}$. Arcs going from $s$ to $V^-$ are saturated, as well as arcs going from $V^+$ to $t$. Saturated arcs are in bold. Arcs with zero flow are dotted.} \label{fig:graph2}
\end{figure}

Recall that we assume (cf. Section~\ref{subsec:graph})
that the scalars $\u_j$ are all non negative, and that we add non-negativity
constraints on~$\xib$.  With the optimality conditions of Lemma~\ref{lemma:opt}
in hand, we can show our first convergence result.
\begin{proposition}[Convergence of Algorithm \ref{algo:prox}] \label{prop:convergence}~\\
Algorithm~\ref{algo:prox} converges in a finite and polynomial number of operations.
\end{proposition}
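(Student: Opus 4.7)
The plan is to bound separately (i) the cost of a single call to \texttt{computeFlow} and (ii) the total number of recursive calls made by the algorithm. Since \texttt{computeFlow} is recursive, I will argue that each invocation either terminates immediately (when $\xibbar=\gammab$) or splits the current graph into two strictly smaller subgraphs, so that the recursion tree has at most $O(|V|)$ nodes.

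First I would handle the per-call complexity. The projection step solves a continuous quadratic knapsack on $\Real^p$ and can be performed in $O(p)$ operations using the algorithm of \citet{brucker}. The max-flow step runs a standard push-relabel procedure on $(V,E,s,t)$, with worst-case complexity polynomial in $|V|,|E|$ (see the bound $O(|V|^2|E|^{1/2})$ invoked in the discussion that follows the algorithm). Once the max-flow is computed, identifying $V^+$ (the set of vertices reachable from $s$ in the residual graph) and $V^-$ can be done by a breadth-first search in time $O(|V|+|E|)$. Thus the total non-recursive work inside one call is polynomial in the size of the current subgraph.

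The heart of the argument is to show that whenever the test $\exists j\in V_u,\ \xibbar_j\neq\gammab_j$ succeeds, both $V^+$ and $V^-$ are nonempty, so that the two recursive subproblems are strictly smaller than the current one. For $V^-$: by construction of the max-flow the capacity of each arc $(j,t)$ is $\gammab_j$, hence whenever $\xibbar_j<\gammab_j$ there is residual capacity on $(j,t)$; if such a $j$ belonged to $V^+$, one could extend the non-saturated path from $s$ to $j$ through $(j,t)$ to obtain an augmenting $(s,t)$-path, contradicting maximality of the flow. Hence $j\in V^-$ and $V^-\neq\emptyset$. For $V^+$: suppose $V^+=\emptyset$; then the minimum cut consists entirely of the source arcs, so by max-flow/min-cut the flow value equals $\lambda\sum_{g\in\GG}\eta_g$. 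On the other hand the flow value equals $\sum_{j\in V_u}\xibbar_j\leq \sum_{j\in V_u}\gammab_j\leq \lambda\sum_{g\in\GG}\eta_g$, where the last inequality is the feasibility constraint of the projection~(\ref{eq:relaxed}). All three quantities must therefore coincide, which together with $\xibbar_j\leq\gammab_j$ forces $\xibbar=\gammab$, contradicting the triggering condition. Hence $V^+\neq\emptyset$.

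Putting the pieces together, each recursive call strictly decreases $|V|$ (in fact splits it into two pieces with $|V^+|,|V^-|\geq 1$), so the recursion depth is at most $|V|$ and the total number of calls is at most $2|V|-1$. Multiplying by the polynomial per-call cost yields the announced polynomial bound, and finiteness is immediate from the strict decrease of $|V|$ along any recursion branch. The main obstacle is the $V^+\neq\emptyset$ case above, which is the only place where the precise definition of the projection step and its interaction with the min-cut really enters; everything else reduces to classical max-flow/min-cut bookkeeping and the complexity of the inner primitives.
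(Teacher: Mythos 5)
Your proposal is correct and follows the same overall strategy as the paper's proof: polynomial work per call (projection, max-flow, cut extraction), plus the key claim that whenever the recursion is triggered both sides $V^+$ and $V^-$ of the min-cut are nonempty, so the recursion tree has $O(|V_0|)$ nodes. Your argument for $V^+\neq\emptyset$ is exactly the paper's: if $V^+=\emptyset$ the cut value is $\lambda\sum_{g}\eta_g$, and the chain $\sum_j\xibbar_j\le\sum_j\gammab_j\le\lambda\sum_g\eta_g$ collapses to equalities, which with $\xibbar_j\le\gammab_j$ componentwise forces $\xibbar=\gammab$, contradicting the triggering test. Where you genuinely diverge is the argument for $V^-\neq\emptyset$: you give a local residual-graph argument --- any $j$ with $\xibbar_j<\gammab_j$ has residual capacity on $(j,t)$, so it cannot lie in $V^+$ without producing an augmenting $(s,t)$-path, hence it lies in $V^-$. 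The paper instead runs the same global counting argument a second time, symmetrically: if $V^-=\emptyset$ the cut value is $\sum_j\gammab_j$, which equals the flow value $\sum_j\xibbar_j$ by max-flow/min-cut, and again componentwise domination forces $\xibbar=\gammab$. Both are valid; your version is slightly more informative in that it identifies exactly which vertices must fall in $V^-$ (every $j$ with $\xibbar_j<\gammab_j$), while the paper's has the virtue of being a mirror image of the $V^+$ case. Your explicit bound of at most $2|V_0|-1$ calls also makes precise the accounting that the paper leaves implicit in the sentence ``it is sufficient to show \ldots both non-empty.''
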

\begin{proof}
Our algorithm splits recursively the graph into disjoints parts and processes
each part recursively.  The processing of one part requires an orthogonal
projection onto an $\ell_1$-ball and a max-flow algorithm, which can both be
computed in polynomial time.  To prove that the procedure converges, it is
sufficient to show that when the procedure \texttt{computeFlow} is called
for a graph $(V,E,s,t)$ and computes a cut $(V^+, V^-)$, then the components
$V^+$ and $V^-$ are both non-empty.

Suppose for instance that $V^- \!\!= \emptyset$.  In this case, the capacity of
the min-cut is equal to $\sum_{j \in V_u} \gammab_j$, and the value of the
max-flow is $\sum_{j \in V_u} \xibbar_j$. Using the classical max-flow/min-cut
theorem~\cite{ford}, we have equality between these two terms.  Since, by
definition of both $\gammab$ and $\xibbar$, we have for all~$j$ in~$V_u$,
$\xibbar_j \leq \gammab_j$, we obtain a contradiction with the existence of $j$
in $V_u$ such that $\xibbar_j \neq \gammab_j$. 

Conversely, suppose now that $V^+ \!\!= \emptyset$.
Then, the value of the max-flow is still $\sum_{j \in V_u} \xibbar_j$, and the value
of the min-cut is $\lambda\sum_{g \in V_{gr}} \eta_g$. Using again the
max-flow/min-cut theorem, we have that $\sum_{j \in V_u} \xibbar_j = \lambda\sum_{g
\in V_{gr}} \eta_g$.  Moreover, by definition of $\gammab$, we also have
$\sum_{j \in V_u} \xibbar_j \leq \sum_{j \in V_u} \gammab_j \leq \lambda\sum_{g
\in V_{gr}} \eta_g$, leading to a contradiction with the existence of $j$ in
$V_u$ satisfying $\xibbar_j \neq \gammab_j$. 
We remind the reader of the fact that such a $j\in V_u$ exists 
since the cut is only computed when the current estimate $\xibbar$ is not optimal yet.
This proof holds for any graph that is equivalent to the canonical one.
\end{proof}
After proving the convergence, we prove that the algorithm is correct with the next proposition.

\begin{proposition}[Correctness of Algorithm \ref{algo:prox}] ~\\
 Algorithm~\ref{algo:prox} solves the proximal problem of Eq.~(\ref{eq:prox_problem}).
\end{proposition}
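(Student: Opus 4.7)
My plan is to verify that the flow returned by Algorithm~\ref{algo:prox} satisfies the primal/dual optimality conditions of Lemma~\ref{lemma:opt}, arguing by induction on the depth of recursion of the procedure \texttt{computeFlow}. Proposition~\ref{prop:convergence} already guarantees that the recursion terminates in finitely many steps, so it suffices to inspect the flow $(\xib,\xibbar)$ it produces and set $\w = \u - \xibbar$.

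First I would observe that every recursive call ends with a valid flow on the current subgraph: flow conservation at intermediate nodes, capacities $\lambda\eta_g$ on $(s,g)$-arcs and $\gammab_j$ on $(j,t)$-arcs. Reassembling the outputs of all leaf calls, the resulting $\xib$ is feasible for Eq.~(\ref{eq:dual_problem2}), i.e., $\sum_{j\in g}\xib^g_j\leq\lambda\eta_g$ and $\supp(\xib^g)\subseteq g$. Hence it remains to check, for every group $g\in\GG$, that either $\w_g=0$, or $\|\xib^g\|_1=\lambda\eta_g$ together with the H\"older equality $\w_g^\top\xib^g=\|\w_g\|_\infty\|\xib^g\|_1$.

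The base case of the induction is a call \texttt{computeFlow}$(V,E)$ in which no further recursion is triggered, i.e., $\xibbar_j=\gammab_j$ for every $j\in V_u$. The projection step is a projection onto the simplex $\{\gammab\geq 0,\ \sum_j\gammab_j\leq \lambda\sum_g\eta_g\}$, whose solution has the water-filling form $\gammab_j=(\u_j-\theta)_+$ for some $\theta\geq 0$. Consequently, the residuals $\w_j=\u_j-\gammab_j$ on this subgraph are all equal to a common value $\theta$ wherever $\gammab_j>0$ and equal to $\u_j$ otherwise. If $\theta=0$ the saturation forces $\w_j=0$ on the active support, so that $\w_g=0$ for every group inside this subgraph and we are done. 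If $\theta>0$, the constraint is active so the total outgoing flow $\lambda\sum_{g\in V_{gr}}\eta_g$ saturates every $(s,g)$-arc, yielding $\|\xib^g\|_1=\lambda\eta_g$. Moreover, by flow conservation, $\xib^g_j>0$ implies $\xibbar_j=\gammab_j>0$, hence $\w_j=\theta=\|\w_g\|_\infty$, which is exactly the H\"older equality condition. Thus the KKT conditions hold on this subgraph.

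For the inductive step, the call produces a min-cut $(V^+,V^-)$ with $V^+$ reachable from the source and $V^-$ not. Because arcs of the form $(g,j)$ have infinite capacity while the cut value is finite, no such arc may cross from $V^+$ to $V^-$: each group $g\in V_{gr}$ therefore keeps all of its attached variable nodes on the same side of the cut. Consequently the subproblems on $(V^+,E^+)$ and $(V^-,E^-)$ are completely decoupled, both in their data-fitting terms $\tfrac12(\u_j-\xibbar_j)^2$ and in their regularization constraints, and the max-flow/min-cut equality together with saturation of $(s,g)$-arcs for $g\in V_{gr}^-$ and of $(j,t)$-arcs for $j\in V_u^+$ shows that the global KKT system splits into the two local ones. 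Applying the induction hypothesis to the recursive calls on $V^+$ and $V^-$ gives the KKT conditions on each side, which combine into the full set of optimality conditions required by Lemma~\ref{lemma:opt}.

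The main subtlety I expect is the base-case verification: one must exploit the specific ``soft-thresholding'' structure of the projection onto the weighted $\ell_1$-ball to argue that the residuals on the active support are all equal to the same $\theta$, which is precisely what makes the H\"older equality hold simultaneously for all groups and all coordinates; the inductive step is then essentially bookkeeping made rigorous by the non-straddling property of groups across the min-cut.
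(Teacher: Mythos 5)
Your base case is sound and is in fact a clean, self-contained version of what the paper does in its ``stopping'' case: exploiting the water-filling form $\gammab_j=(\u_j-\theta)_+$ of the projection to obtain a common residual $\theta$ on the active support, saturation of all $(s,g)$-arcs when the simplex constraint is active, and hence the H\"older equality for every group of the current subgraph. The paper reaches the same conclusion by viewing the projection step as a one-group instance of the dual problem and invoking Lemma~\ref{lemma:opt}, so up to this point the two arguments essentially coincide (your induction on recursion depth versus the paper's induction on $|V|$ is an inessential difference).

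The inductive step, however, contains a genuine gap. The absence of arcs crossing from $V^+$ to $V^-$ only forces groups $g\in V_{gr}^+$ to keep all their variable nodes in $V_u^+$; a group $g\in V_{gr}^-$ may perfectly well own variable nodes in $V_u^+$, since an arc $(g,j)$ going from $V^-$ to $V^+$ is permitted (it merely carries no flow). The paper's own Figure~\ref{fig:graph2} exhibits exactly this configuration: $h\in V_{gr}^-$ while the node $2\in h$ lies in $V_u^+$. Your claim that ``each group keeps all of its attached variable nodes on the same side of the cut'' is therefore false, and with it the assertion that the two subproblems are completely decoupled in their regularization constraints. Concretely, for such a straddling group $g$ the recursive call on $(V^-,E^-)$ only certifies the H\"older equality for the truncated group $g'=g\cap V_u^-$, i.e., with $\NormInf{\w_{g'}}$ in place of $\NormInf{\w_g}$; to lift this to the full group one must prove that the residuals $\w_j$ for $j\in g\cap V_u^+$ never exceed $\max_{l\in g\cap V_u^-}|\w_l|$. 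Establishing this is the technical heart of the paper's proof (the decomposition of $V^+$ into $V^{++}$ and $V^{+-}$ and the contradiction argument showing $\NormInf{\w_g}\leq\max_{j\in V_u}|\u_j-\gammab_j|$ on the $V^+$ side, with the reverse inequality on the $V^-$ side), and it is entirely absent from your argument. Without it, the two local optimality systems cannot be combined into the global conditions of Lemma~\ref{lemma:opt}.
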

\begin{proof}
For a group structure $\GG$, we first prove the correctness of our algorithm if the graph used is its associated canonical graph that we denote $G_0=(V_0,E_0,s,t)$.
We proceed by induction on the number of nodes of the graph.
The induction hypothesis $\H(k)$ is the following:~\vspace*{0.2cm}\\
\textit{For all canonical graphs $G=(V = V_u \cup V_{gr},E,s,t)$ associated with a group structure $\GG_V$ with weights $(\eta_g)_{g \in \GG_V}$ such that $|V|\leq k$, \texttt{computeFlow}$(V,E)$
solves the following optimization problem:}
\begin{equation}
\min_{(\xib_j^g)_{j \in V_u,g \in V_{gr}}} \sum_{j \in V_u} \frac{1}{2} (\u_j - \sum_{g \in V_{gr}}
\xib_j^g)^2 \st \forall g \in V_{gr},~ \sum_{j \in V_u} \xib_j^g \leq \lambda \eta_g
~~\text{and}~~ \xib_j^g=0,~\forall j \notin g. \label{eq:dual2}
\end{equation}
Since $\GG_{V_0}=\GG$, it is sufficient to show that $\H(|V_0|)$ to prove the proposition.

We initialize the induction by $\H(2)$, corresponding to the simplest canonical graph, for which $|V_{gr}|=|V_u|=1$).
Simple algebra shows that $\H(2)$ is indeed correct.

We now suppose that $\H(k')$ is true for all $k' < k$ and consider a graph
$G=(V,E,s,t)$, $|V|=k$.  The first step of the algorithm computes the variable
$(\gammab_j)_{j\in V_u}$ by a projection on the $\ell_1$-ball. This is itself
an instance of the dual formulation of Eq.~(\ref{eq:dual_problem}) in a
simple case, with one group containing all variables.  We can therefore use
Lemma~\ref{lemma:opt} to characterize the optimality of
$(\gammab_j)_{j\in V_u}$, which yields 
\begin{equation}
    \!\!\left\{ \begin{array}{l}
       \sum_{j\in V_u} (\u_j-\gammab_j)\gammab_j = \big(\max_{j \in V_u} |\u_j-\gammab_j|\big)
  \sum_{j\in V_u} \gammab_j ~\text{and}~ \sum_{j\in V_u} \gammab_j=\lambda \sum_{g \in V_{gr}} \eta_g,\!\!\!\!
\\
       \text{or}~~ \u_j - \gammab_j = 0,~\forall j \in V_u. \label{eq:opt_gamma}
    \end{array} \right. 
\end{equation}
The algorithm then computes a max-flow, using the scalars $\gammab_j$ as
capacities, and we now have two possible situations:\\
\begin{enumerate}
\item If $\xibbar_j = \gammab_j$ for
all $j$ in $V_u$, the algorithm stops; we write $\w_j = \u_j-\xibbar_j$ for $j$ in $V_u$, and using Eq.~(\ref{eq:opt_gamma}), we obtain
\begin{equation}
    \left\{ \begin{array}{l}
       \sum_{j\in V_u} \w_j\xibbar_j = (\max_{j \in V_u} |\w_j|)
  \sum_{j\in V_u} \xibbar_j ~~\text{and}~~ \sum_{j\in V_u} \xibbar_j=\lambda \sum_{g \in V_{gr}} \eta_g,
\\
       \text{or}~~ \w_j = 0,~\forall j \in V_u.
    \end{array} \right. 
\end{equation}
We can rewrite the condition above as 
$$
\sum_{g\in V_{gr}}\sum_{j\in g}\! \w_j\xib_j^g = \sum_{g\in V_{gr}}\! ( \max_{j \in V_u} |\w_j| )\!\!\! \sum_{j\in V_u}\xib_j^g.
$$
Since all the quantities in the previous sum are positive, this can only hold if for all $g\in V_{gr}$,
$$
\sum_{j\in V_u}\w_j\xib_j^g = ( \max_{j \in V_u} |\w_j| )\! \sum_{j\in V_u}\xib_j^g.
$$
Moreover, by definition of the max flow and the optimality conditions, we have
$$
\forall g\in V_{gr},\ \sum_{j\in V_u}\! \xib^g_j \leq \lambda \eta_g,\ \text{and}\ \sum_{j\in V_u} \xibbar_j=\lambda \sum_{g \in V_{gr}} \eta_g,
$$
which leads to
$$
\forall g\in V_{gr}, \sum_{j\in V_u}\! \xib^g_j  = \lambda \eta_g.
$$
By Lemma~\ref{lemma:opt}, we have shown that the problem~(\ref{eq:dual2}) is solved.

\item Let us now consider the case where there exists $j$ in $V_u$ such that
$\xibbar_j \neq \gammab_j$. The algorithm splits the vertex set $V$
into two parts $V^+$ and $V^-$, which we have proven to be non-empty in the proof
of Proposition~\ref{prop:convergence}. The next step of the algorithm removes all edges between $V^+$ and $V^-$ (see Figure~\ref{fig:graph2}).
Processing $(V^+,E^+)$ and $(V^-,E^-)$ independently, it
updates the value of the flow matrix $\xib^g_j,\ j\in V_u,\ g\in V_{gr}$, 
and the corresponding flow vector~$\xibbar_j,\ j\in V_u$.
As for $V$, we denote by $V^+_u \defin V^+ \cap V_u$, $V^-_u \defin V^- \cap V_u$ and
$V^+_{gr} \defin V^+ \cap V_{gr}$, $V^-_{gr} \defin V^- \cap V_{gr}$.

Then, we notice that $(V^+,E^+,s,t)$ and $(V^-,E^-,s,t)$ are respective canonical graphs for the group
structures $\GG_{V^+} \defin \{ g \cap V_u^+ \mid g \in V_{gr} \}$, and $\GG_{V^-} \defin \{ g \cap V_u^- \mid g \in V_{gr} \}$.

Writing $\w_j = \u_j-\xibbar_j$ for $j$ in $V_u$, and using the induction hypotheses
$\H(|V^+|)$ and $\H(|V^-|)$, we now have the following optimality conditions deriving
from Lemma~\ref{lemma:opt} applied on Eq.~(\ref{eq:dual2}) respectively for the graphs
$(V^+,E^+)$ and $(V^-,E^-)$:
\begin{equation}
 \forall g \in V_{gr}^+,  g' \defin g \cap V_u^+,~
    \left\{ \begin{array}{l}
       \w_{g'}^\top \xib_{g'}^g = \|\w_{g'}\|_\infty 
 \sum_{j\in g'}\! \xib^g_j ~~\text{and}~~ \sum_{j\in g'}\! \xib^g_j=\lambda \eta_g,
\\
       \text{or}~~ \w_{g'} = 0, \\
    \end{array} \right. \label{eq:hplus}
\end{equation}
and
\begin{equation}
\forall g \in V_{gr}^-, g' \defin g \cap V_u^-,  
    \left\{ \begin{array}{l}
       \w_{g'}^\top \xib_{g'}^g = \|\w_{g'}\|_\infty 
\sum_{j\in g'}\! \xib^g_j  ~~\text{and}~~ \sum_{j\in g'}\! \xib^g_j=\lambda \eta_g,
\\
       \text{or}~~ \w_{g'} = 0. \\
    \end{array} \right. 
\label{eq:hmoins}
\end{equation}
We will now combine Eq.~(\ref{eq:hplus}) and Eq.~(\ref{eq:hmoins}) into optimality conditions
for Eq.~(\ref{eq:dual2}).  We first notice that $g \cap V_u^+ = g$ since there
are no arcs between $V^+$ and $V^-$ in $E$ (see the properties of the cuts
discussed before this proposition).
It is therefore possible to replace $g'$ by $g$ in Eq.~(\ref{eq:hplus}). 
We will show that it is possible to do the same in Eq.~(\ref{eq:hmoins}), so that combining these two equations yield the optimality conditions of Eq.~(\ref{eq:dual2}).

More precisely, we will show that for all $g \in V_{gr}^-$ and $j \in
g \cap V_u^+$, $|\w_j| \leq \max_{l \in g \cap V_u^-} |\w_l|$, in which case $g'$ can be replaced by $g$ in Eq.~(\ref{eq:hmoins}).
This result is relatively intuitive: $(s,V^+)$ and $(V^-,t)$ being an $(s,t)$-cut,
all arcs between $s$ and $V^-$ are saturated, while there are unsaturated arcs
between $s$ and $V^+$; one therefore expects the residuals $\u_j -\xibbar_j$
to decrease on the $V^+$ side, while increasing on the $V^-$ side.
The proof is nonetheless a bit technical.

Let us show first that for all $g$ in $V_{gr}^+$, $\NormInf{\w_g} \leq \max_{j\in V_u}|\u_j-\gammab_j|$.
We split the set $V^+$ into disjoint parts:
\begin{displaymath}
\begin{split}
V_{gr}^{++} &\defin \{ g \in V_{gr}^+ \st \NormInf{\w_g} \leq \max_{j\in V_u}|\u_j-\gammab_j| \}, \\
V_{u}^{++} &\defin \{ j \in V_{u}^+ \st \exists g \in V_{gr}^{++},~ j \in g \}, \\
V_{gr}^{+-} &\defin V_{gr}^+ \setminus V_{gr}^{++} = \{ g \in V_{gr}^+ \st \NormInf{\w_g} > \max_{j\in V_u}|\u_j-\gammab_j| \}, \\
V_{u}^{+-} &\defin V_{u}^+ \setminus V_{u}^{++}. \\
\end{split}
\end{displaymath}
As previously, we denote $V^{+-}\! \defin V_u^{+-}\! \cup V_{gr}^{+-}$ and $V^{++} \defin\!\!
V_u^{++} \cup V_{gr}^{++}$.  We want to show that $V_{gr}^{+-}$ is necessarily empty.
We reason by contradiction and assume that $V_{gr}^{+-} \neq \varnothing$.

According to the definition of the different sets above, we observe that no
arcs are going from $V^{++}$ to $V^{+-}$, that is, for all $g$ in
$V_{gr}^{++}$, $g \cap V_{u}^{+-}=\varnothing$.  We observe as well that the
flow from $V_{gr}^{+-}$ to $V_u^{++}$ is the null flow, because optimality
conditions (\ref{eq:hplus}) imply that for a group $g$ only nodes $j \in g$
such that $\w_j=\|\w_g\|_\infty$ receive some flow, which excludes nodes in
$V_u^{++}$ provided $V_{gr}^{+-} \neq \varnothing$;
Combining this fact and the inequality 
$
\sum_{g \in V_{gr}^{+}} \lambda \eta_g
\geq \sum_{j \in V_u^{+}} \gammab_j
$ (which is a direct consequence of the minimum $(s,t)$-cut),
we have as well 
$$
\sum_{g \in V_{gr}^{+-}} \lambda \eta_g \geq \sum_{j \in V_u^{+-}} \gammab_j.
$$

Let $j \in V_u^{+-}$, if $\xibbar_j \neq 0$ then for some $g \in V_{gr}^{+-}$ such that $j$ receives
some flow from $g$, which from the optimality conditions~(\ref{eq:hplus}) implies $\w_j=\|\w_g\|_\infty$;
by definition of $V_{gr}^{+-}$, $\|\w_g\|_\infty > \u_j-\gammab_j$. But since at the optimum, $\w_j=\u_j-\xibbar_j$,
this implies that $\xibbar_j < \gammab_j$, and in turn that $\sum_{j\in V_u^{+-}} \xibbar_j=\lambda \sum_{g \in V_{gr}^{+-}} \eta_g$.
Finally, $$\lambda \sum_{g \in V_{gr}^{+-}} \eta_g=\sum_{j\in V_u^{+-},\, \xibbar_j \neq 0} \xibbar_j < \sum_{j\in V_u^{+-}} \gammab_j$$ and this is a contradiction.

We now have that for all $g$ in $V_{gr}^+$, $\NormInf{\w_g} \leq \max_{j\in V_u}|\u_j-\gammab_j| $.
The proof showing that for all $g$ in $V_{gr}^-$, 
$
\NormInf{\w_g} \geq \max_{j\in V_u}|\u_j-\gammab_j|,
$ 
uses the same kind of decomposition for $V^-$, and follows along similar arguments. We will therefore not detail it.

To summarize, we have shown that for all $g \in V_{gr}^-$ and $j \in
g \cap V_u^+$, $|\w_j| \leq \max_{l \in g \cap V_u^-} |\w_l|$.
Since there is no flow from $V^-$ to $V^+$, 
i.e., $\xib_j^g=0$ for $g$ in $V_{gr}^-$ and $j$ in $V_{u}^+$,
we can now replace the definition of $g'$ in Eq.~(\ref{eq:hmoins}) by $g'
\defin g \cap V_u$, the combination of Eq.~(\ref{eq:hplus}) and
Eq.~(\ref{eq:hmoins}) gives us optimality conditions for Eq.~(\ref{eq:dual2}).
\end{enumerate}

The proposition being proved for the canonical graph, we extend it now
for an equivalent graph in the sense of Lemma~\ref{lemma:equivalent}.
First, we observe that the algorithm gives the same values of $\gammab$
for two equivalent graphs. Then, it is easy to see that the value $\xibbar$
given by the max-flow, and the chosen $(s,t)$-cut is the same, which is
enough to conclude that the algorithm performs the same steps for
two equivalent graphs.
\end{proof}

\subsection{Computation of the Dual Norm $\Omega^\star$}
As for the proximal operator, the computation of dual norm $\Omega^*$ can itself be shown to solve another network flow problem, based on the following variational formulation, which extends a previous result from \citet{jenatton}:
\begin{lemma}[Dual formulation of the dual-norm $\Omega^\star$.] ~\\ Let $\kappab \in \R{p}$. We have
\begin{displaymath}
\Omega^*(\kappab) = \!\!\! \min_{\xib\in\RR{p}{|\G|},\tau \in \R{}} \!\!\! \tau \quad \text{s.t.}\quad \sum_{g\in\G}\xib^g=\kappab,\ \text{and}\ \forall g\in\G,\ \|\xib^g\|_1 \leq \tau\eta_g ~~\text{with}~~\
\xib_j^g=0\ \text{if}\ j\notin g. 
\end{displaymath}
\end{lemma}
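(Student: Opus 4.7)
\begin{proof_appendix}{the dual norm formula}
The plan is to prove the two inequalities separately, treating the right-hand side as a convex (in fact linear) program whose value equals $\Omega^*(\kappab)$ by strong duality.

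For the easy direction $\Omega^*(\kappab) \leq \min \tau$, I would take any feasible triple $(\xib,\tau)$ for the right-hand side and any $\z$ with $\Omega(\z)\leq 1$, and bound
\[
 \kappab^\top \z \;=\; \sum_{g\in\G}(\xib^g)^\top\z \;=\; \sum_{g\in\G}(\xib^g)^\top \z_g \;\leq\; \sum_{g\in\G}\|\xib^g\|_1\,\|\z_g\|_\infty \;\leq\; \tau\sum_{g\in\G}\eta_g\|\z_g\|_\infty \;\leq\; \tau,
\]
where the first equality uses $\sum_g\xib^g=\kappab$, the second uses $\supp(\xib^g)\subseteq g$, the first inequality is the $\ell_1/\ell_\infty$ H\"older inequality, and the remaining steps use the capacity constraint and $\Omega(\z)\leq 1$. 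Taking the supremum over $\z$ gives $\Omega^*(\kappab)\leq\tau$.

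For the reverse direction, I would dualize the right-hand side. Since $\tau$ enters linearly, the program is convex; rewriting $\xib^g=\xib^{g}_+ - \xib^g_-$ with non-negative parts makes it a linear program, so strong duality holds whenever the value is finite. Introducing a multiplier $\z\in\R^p$ for the equality constraint $\sum_g\xib^g=\kappab$ and multipliers $\mu_g\geq 0$ for the capacity constraints $\|\xib^g\|_1\leq\tau\eta_g$, I would form the Lagrangian, then minimize over $\tau\in\R$ (which forces $\sum_g\mu_g\eta_g=1$) and over each $\xib^g$ with $\supp(\xib^g)\subseteq g$ (which forces $\mu_g\geq\|\z_g\|_\infty$ and collapses the inner minimum to $\z^\top\kappab$). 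The dual is therefore
\[
 \max_{\z,\mu}\; \z^\top\kappab \quad\text{s.t.}\quad \sum_{g\in\G}\mu_g\eta_g=1,\ \ \mu_g\geq\|\z_g\|_\infty\ \forall g.
\]
Eliminating $\mu$ by choosing $\mu_g=\|\z_g\|_\infty$ (which is feasible after a rescaling exactly when $\sum_g\eta_g\|\z_g\|_\infty\leq 1$) reduces the constraint to $\Omega(\z)\leq 1$, and the dual value becomes $\max\{\z^\top\kappab:\Omega(\z)\leq 1\}=\Omega^*(\kappab)$. Strong LP duality then yields $\min\tau=\Omega^*(\kappab)$, and the infimum is attained whenever it is finite because the feasible set of the primal is nonempty (e.g.\ pick any partition of $\kappab$ that respects the support constraints, which exists if and only if every index belongs to at least one group; the degenerate case should be handled by appealing to the convention $\Omega^*(\kappab)=+\infty$ when no such decomposition exists).

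The main obstacle I anticipate is the strong-duality step: one must verify that the value is finite (otherwise both sides are $+\infty$ and the identity holds trivially) and that the feasibility reduction $\mu_g\to\|\z_g\|_\infty$ is valid, in particular that one can always rescale a $\z$ with $\Omega(\z)\leq 1$ so that $\sum_g\mu_g\eta_g=1$ can be met with $\mu_g\geq\|\z_g\|_\infty$. Modulo this careful bookkeeping—which is routine for piecewise-linear convex programs—the two inequalities combine to give the claimed variational formula.
\end{proof_appendix}
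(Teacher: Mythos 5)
Your proof is correct, but it runs the duality argument in the opposite direction from the paper. The paper starts from the definition $\Omega^*(\kappab)=\max_{\Omega(\z)\leq 1}\z^\top\kappab$, lifts it to a conic program by introducing auxiliary variables $\alpha_g$ with $\|\z_g\|_\infty\leq\alpha_g$ and $\sum_{g}\eta_g\alpha_g\leq 1$, invokes strong duality via Slater's condition (trivially satisfied since $\z=0$ is strictly feasible), and reads the claimed min problem off as the Lagrangian dual; both inequalities then follow from that single strong-duality statement. You instead prove weak duality by hand---the $\ell_1/\ell_\infty$ H\"older chain giving $\Omega^*(\kappab)\leq\tau$ for every feasible pair $(\xib,\tau)$---and recover the reverse inequality by dualizing the min problem as a linear program; your multipliers $\mu_g$ are exactly the paper's lifted variables $\alpha_g$, so the two Lagrangians are mirror images of one another. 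What your route buys is that half of the lemma becomes elementary and duality-free, which also makes transparent the interpretation of $\tau$ as an upper bound on the dual norm. What it costs is that strong LP duality requires the min problem to be feasible, which forces the degenerate-case discussion you flag at the end (indices covered by no group, in which case $\Omega$ is not a norm and both sides are $+\infty$), whereas the paper dualizes a problem that is always strictly feasible and sidesteps this entirely. Your elimination of $\mu$ reducing the dual constraint set to $\Omega(\z)\leq 1$ is valid precisely because the weights $\eta_g$ are positive, so the slack $1-\sum_{g}\eta_g\|\z_g\|_\infty$ can always be absorbed into a single $\mu_g$; it is worth stating that one-line verification explicitly rather than calling it a rescaling.
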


\begin{proof} 
By definition of $\Omega^*(\kappab)$, we have 
$$\Omega^*(\kappab)\defin\max_{\Omega(\z)\leq 1}\z^\top\kappab.$$
By introducing the primal variables $(\alpha_g)_{g\in\GG} \in \R{|\GG|}$, we can rewrite the previous maximization problem as
$$
 \Omega^*(\kappab) = \max_{ \sum_{g\in\GG} \!\eta_g\alpha_g \leq 1 } \kappab^\top \z,\quad \mbox{ s.t. } \quad \forall\ g\in\GG,\ \|\z_g\|_\infty \leq \alpha_g,
$$
with the additional $|\GG|$ conic constraints $ \|\z_g\|_\infty \leq \alpha_g$. 
This primal problem is convex and satisfies Slater's conditions for generalized conic inequalities, which implies that strong duality holds \citep{boyd}.
We now consider the Lagrangian $\mathcal{L}$ defined as
$$
\mathcal{L}(\z,\alpha_g,\tau, \gamma_g,\xib) 
= 
\kappab^\top \z + \tau (1 \! - \!\! \sum_{g\in\GG} \! \eta_g \alpha_g ) + 
\sum_{g\in\GG} \binom{\alpha_g}{\z_g}^\top \! \binom{\gamma_g}{\xib^g_g}, 
$$ 
with the dual variables 
$\{\tau, (\gamma_g)_{g\in\GG},\xib\} \in \R{}_+ \! \times \! \R{|\GG|} \! \times \!\RR{p}{|\GG|}$
such that for all $g\in\GG$, 
$\xib^g_j = 0 \, \mbox{ if } \, j \notin g$
and 
$ \|\xib^g\|_1 \leq \gamma_g $.
The dual function is obtained by taking the derivatives of $\mathcal{L}$ with respect to the primal variables $\z$ and $(\alpha_g)_{g\in\GG}$ and equating them to zero,
which leads to 
\begin{eqnarray*}
	\forall j\in \{1,\dots,p\},&      \kappab_j + \!\! \sum_{ g \in \GG } \xib^g_j & =  0 \\
	\forall g \in \GG,& \tau \eta_g -  \gamma_g & = 0.
\end{eqnarray*}
After simplifying the Lagrangian and flipping the sign of $\xib$, the dual problem then reduces to
$$
	\min_{ \xib\in\RR{p}{|\G|},\tau \in \R{} }  \tau  \quad \mbox{ s.t. }
	\begin{cases}
	 \forall j \in \{1,\dots,p\}, \kappab_j = \!\! \sum_{ g \in \GG } \xib^g_j\,\mbox{ and }\, \xib^g_j = 0 \, \mbox{ if } \, j \notin g,\\
	 \forall g \in \GG, \|\xib^g\|_1 \leq \tau\eta_g,
	\end{cases}
$$
which is the desired result.
\end{proof}
We now prove that Algorithm~\ref{algo:dual_norm} is correct.
\begin{proposition}[Convergence and correctness of Algorithm \ref{algo:dual_norm}] ~\\
 Algorithm~\ref{algo:dual_norm} computes the value of the dual norm of Eq.~(\ref{eq:dual_norm}) in a finite and polynomial number of operations.
\end{proposition}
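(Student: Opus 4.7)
The plan is to follow the two-part structure used for Algorithm~\ref{algo:prox}: first show that \texttt{dualNorm} terminates in a polynomial number of operations, then prove its correctness by induction on $|V|$, using the variational characterization of $\Omega^\star$ established in the preceding lemma. The central observation is that computing $\Omega^\star(\kappab)$ amounts to finding the smallest scalar $\tau \geq 0$ such that the canonical graph, with capacities $\tau\eta_g$ on the arcs $(s,g)$ and $\kappab_j$ on the arcs $(j,t)$, admits an $(s,t)$-flow saturating every sink arc. A standard max-flow/min-cut analysis then yields the closed-form expression
\begin{displaymath}
   \tau^\star \;=\; \max_{\varnothing\neq B \subseteq V_u} \; \frac{\sum_{j \in B}\kappab_j}{\sum_{g\in V_{gr}:\, g\cap B \neq \varnothing} \eta_g},
\end{displaymath}
which I will use as the main bridge between the algorithm and the true optimum.

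For termination and polynomial complexity, I would argue analogously to Proposition~\ref{prop:convergence}. Each recursive call performs one linear-time ratio computation and one polynomial-time max-flow, so it suffices to show that whenever the recursion is triggered, both $V^+$ and $V^-$ are strictly non-empty, ensuring $|V^-|<|V|$. This follows from the initial choice $\tau_0 = \sum_{j}\kappab_j / \sum_{g}\eta_g$: the trivial cuts $(\{s\},V\cup\{t\})$ and $(\{s\}\cup V,\{t\})$ each have value exactly $\sum_j \kappab_j$, so if either $V^+$ or $V^-$ were empty, the max-flow would already equal $\sum_j\kappab_j$ and every sink arc would be saturated, preventing any recursive call.

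For correctness, I would induct on $|V|$, with the base case of a single group and single variable being trivial. The non-recursive branch is immediate: the max-flow certifies feasibility of $\tau_0$, and any feasible $(\xib,\tau)$ must satisfy $\sum_j\kappab_j = \sum_g\sum_{j\in g}\xib^g_j \leq \tau\sum_g\eta_g$, so $\tau_0$ is also a lower bound. In the recursive branch, let $\tau_-$ denote the value returned by the recursive call on $(V^-,E^-)$, which by the inductive hypothesis is the exact optimum for the reduced problem. I would then prove $\tau^\star = \tau_-$ via two inequalities. The direction $\tau_- \leq \tau^\star$ is the easy one: restricting any feasible flow attaining $\tau^\star$ on the full graph to variables in $V^-_u$ yields a feasible flow on the reduced problem at the same $\tau$, thanks to the cut property that forbids infinite-capacity arcs from $V^+_{gr}$ to $V^-_u$, forcing every group intersecting $V^-_u$ to already lie in $V^-_{gr}$.

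The reverse inequality $\tau^\star \leq \tau_-$ is the main obstacle. Using the closed-form characterization displayed above, it reduces to showing that $\rho(B) := (\sum_{j\in B}\kappab_j)/(\sum_{g:\, g\cap B \neq \varnothing}\eta_g) \leq \tau_-$ for every non-empty $B \subseteq V_u$. Decomposing $B = B^+ \sqcup B^-$ with $B^\pm \subseteq V^\pm_u$, the optimality of $\tau_-$ for the reduced problem directly gives $\sum_{j\in B^-}\kappab_j \leq \tau_- \sum_{g:\, g\cap B^- \neq \varnothing}\eta_g$. For the $V^+$ side, I would compare the min-cut $(V^+,V^-)$, of value $\sum_{V^+_u}\kappab_j + \tau_0\sum_{V^-_{gr}}\eta_g$, with the alternative cut obtained by moving $B^+$ from the source side to the sink side; min-cut optimality simplifies to $\sum_{j\in B^+}\kappab_j \leq \tau_0 \sum_{g\in V^+_{gr}:\, g\cap B^+\neq\varnothing}\eta_g$. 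Adding the two bounds, using $\tau_0 \leq \tau_-$, and invoking the set-theoretic fact that $V^+_{gr}$ and $V^-_{gr}$ are disjoint by construction so the two indexing families of groups are disjoint subsets of $\{g:\, g\cap B\neq\varnothing\}$, yields $\rho(B) \leq \tau_-$ after collecting terms. This last combinatorial bookkeeping, rather than the max-flow arguments themselves, is the delicate step: one must be careful that the denominators line up, so that the disjoint union of the two group families produces exactly the denominator $\sum_{g:\, g\cap B \neq \varnothing}\eta_g$ rather than an over-count.
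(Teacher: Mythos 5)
Your proof is correct, and while its skeleton matches the paper's (induction on $|V|$, non-emptiness of $V^+$ and $V^-$ for termination, and the restriction argument giving $\tau_-\leq\tau^\star$ from the absence of arcs from $V^+_{gr}$ to $V^-_u$), the key inequality $\tau^\star\leq\tau_-$ is established by a genuinely different mechanism. The paper argues on the primal (flow) side: since no flow crosses from $V^-$ to $V^+$ and the arcs $(j,t)$, $j\in V_u^+$, are saturated, gluing the current max-flow restricted to $V^+$ (feasible within capacities $\tau_0\eta_g\leq\tau_-\eta_g$) with the recursively computed flow on $V^-$ produces a point feasible for the full problem at level $\tau_-$, whence $\tau_-\geq\tau^\star$ immediately. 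You instead argue on the dual (cut) side: you first state the closed-form Gale--Hoffman/Hall-type identity $\tau^\star=\max_{B}\rho(B)$ --- which the paper never makes explicit --- and verify $\rho(B)\leq\tau_-$ subset by subset via a cut-exchange. Both routes are valid; the paper's is shorter because the certifying flow is already produced by the algorithm, while yours makes the combinatorial structure of $\Omega^*$ explicit at the cost of the bookkeeping you flag. Two points to tighten in your write-up: (i) when you ``move $B^+$ to the sink side'' you must simultaneously move the groups $\{g\in V_{gr}^+ : g\cap B^+\neq\varnothing\}$, otherwise the modified cut crosses infinite-capacity arcs; the inequality you state is exactly the one obtained after doing so (it also follows directly from the saturation of the arcs $(j,t)$, $j\in V_u^+$, together with the fact that no flow goes from $V_{gr}^-$ to $V_u^+$); and (ii) the step $\tau_0\leq\tau_-$ deserves a sentence: comparing the min-cut value $\sum_{j\in V_u^+}\kappab_j+\tau_0\sum_{g\in V_{gr}^-}\eta_g$ with the trivial cut $\sum_{j\in V_u}\kappab_j$ gives $\tau_0\sum_{g\in V_{gr}^-}\eta_g\leq\sum_{j\in V_u^-}\kappab_j$, and the right-hand side is at most $\tau_-\sum_{g\in V_{gr}^-}\eta_g$ by feasibility of $\tau_-$ for the reduced problem --- this is the same ``arcs from $s$ to $V^-$ are saturated'' observation the paper invokes.
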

\begin{proof}
The convergence of the algorithm only requires to show that the
cardinality of $V$ in the different calls of the function
\texttt{computeFlow} strictly decreases. Similar arguments
to those used in the proof of Proposition~\ref{prop:convergence} can
show that each part of the cuts $(V^+,V^-)$ are
both non-empty. The algorithm thus requires a finite number of
calls to a max-flow algorithm and converges in a finite
and polynomial number of operations.

Let us now prove that the algorithm is correct for a canonical graph. We
proceed again by induction on the number of nodes of the graph.  
More precisely, we consider the
induction hypothesis $\H'(k)$
defined as:~\vspace*{0.2cm}\\
\textit{for all canonical graphs} $G=(V,E,s,t)$ associated with a group structure $\GG_V$ and \textit{such that} $|V| \leq k$, \texttt{dualNormAux}$(V=V_u \cup V_{gr},E)$
\textit{solves the following optimization problem:}
\begin{equation}
 \min_{\xib,\tau} \tau \quad \text{s.t.}\quad \forall j \in V_u, \kappab_j = \sum_{g\in V_{gr}}\xib_j^g,\ \text{and}\ \forall g\in V_{gr},\ \sum_{j \in V_u} \xib^g_j \leq \tau\eta_g ~~\text{with}~~\
\xib_j^g=0\ \text{if}\ j\notin g. \label{eq:Hprime}
\end{equation}
We first initialize the induction by $\H(2)$ (i.e., with the simplest canonical graph, such that $|V_{gr}|=|V_u|=1$).
Simple algebra shows that $\H(2)$ is indeed correct.

We next consider a canonical graph $G=(V,E,s,t)$ such that $|V|=k$, and suppose that $\H'(k-1)$ is true.
After the max-flow step, we have two possible cases to discuss:
\begin{enumerate}
\item If $\xibbar_j = \gammab_j$ for all $j$ in $V_u$, the algorithm stops.
We know that any scalar $\tau$ such that the constraints of
Eq.~(\ref{eq:Hprime}) are all satisfied necessarily verifies $\sum_{g \in
V_{gr}} \tau \eta_g \geq \sum_{j \in V_{u}}\kappab_j$. We have indeed that
$\sum_{g \in V_{gr}} \tau \eta_g$ is the value of an $(s,t)$-cut in the graph,
and $\sum_{j \in V_{u}}\kappab_j$ is the value of the max-flow, and the
inequality follows from the max-flow/min-cut theorem~\cite{ford}. This gives a
lower-bound on $\tau$. Since this bound is reached, $\tau$ is necessarily
optimal.
\item We now consider the case where there exists $j$ in $V_u$ such that
$\xibbar_j \neq \kappab_j$, meaning that for the given value of $\tau$,
the constraint set of Eq.~(\ref{eq:Hprime}) is not feasible for $\xib$,
and that the value of $\tau$ should necessarily increase.
The algorithm splits the vertex set $V$ into two non-empty parts $V^+$ and
$V^-$ and we remark that there are no arcs going from $V^+$ to $V^-$, and no
flow going from $V^-$ to $V^+$. 
 Since the arcs going from $s$ to $V^-$ are 
 saturated, we have that $\sum_{g \in V_{gr}^-} \tau \eta_g \leq \sum_{j
 \in V_{u}^-}\kappab_j$.
 Let us now consider $\tau^\star$ the solution of Eq.~(\ref{eq:Hprime}).
 Using the induction hypothesis $\H'(|V^-|)$, the algorithm computes a new value
 $\tau'$ that solves Eq.~(\ref{eq:Hprime}) when replacing $V$ by $V^-$ and
 this new value satisfies the following inequality $\sum_{g \in V_{gr}^-} \tau'
 \eta_g \geq \sum_{j \in V_{u}^-}\kappab_j$. The value of $\tau'$ has therefore
 increased and the updated flow $\xib$ now satisfies the constraints of
 Eq.~(\ref{eq:Hprime}) and therefore $\tau'\geq \tau^\star$.
 Since there are no arcs going from $V^+$ to $V^-$, $\tau^\star$ is feasible
for Eq.~(\ref{eq:Hprime}) when replacing $V$ by $V^-$ and we have that
$\tau^\star \geq  \tau'$ and then $\tau'=\tau^\star$.
\end{enumerate}
To prove that the result holds for any equivalent graph, similar arguments to those used in
the proof of Proposition~\ref{prop:convergence} can be exploited, 
showing that the algorithm
computes the same values of $\tau$ and same $(s,t)$-cuts at each step.
\end{proof}

\section{Algorithm FISTA with duality gap} \label{appendix:fista}

In this section, we describe in details the algorithm FISTA \cite{beck} when
applied to solve problem~(\ref{eq:formulation}), with a duality gap as the stopping
criterion. The algorithm, as implemented in the experiments, is summarized in~Algorithm~\ref{alg:fista}.

Without loss of generality, let us assume we are looking for models of the form
$\X\w$, for some matrix $\X \in \RR{n}{p}$ (typically, a linear model where $\X$
is the design matrix composed of $n$ observations in $\Real^p$).  
Thus, we can consider the following primal problem 
\begin{equation}
   \min_{\w \in \Real^p} f(\X\w) + \lambda \Omega(\w),\label{eq:formulation_with_X}
\end{equation}
in place of problem~(\ref{eq:formulation}).  Based on Fenchel duality arguments
\cite{borwein},
$$
 f(\X\w)+ \lambda \Omega(\w) + f^*(-\kappab),\ \text{for}\ \w\in\R{p},\kappab\in\R{n}\ \text{and}\
\Omega^*(\X^\top\kappab) \leq \lambda,
$$
is a duality gap for problem~(\ref{eq:formulation_with_X}), where
$f^*(\kappab)\defin\sup_{\z} [\z^\top\kappab - f(\z)]$ is the Fenchel conjugate
of $f$~\citep{borwein}.
Given a primal variable $\w$, a good dual candidate $\kappab$ can be
obtained by looking at the conditions that have to be satisfied by the pair
$(\w,\kappab)$ at optimality~\cite{borwein}.  In particular, the dual variable
$\kappab$ is chosen to be  
$$
\kappab = -\rho^{-1} \nabla\! f(\X\w),\ \text{with}\ \rho\defin \max\big\{\lambda^{-1}\Omega^*(\X^\top\! \nabla\! f(\X\w)),1\big\}.
$$
Consequently, computing the duality gap requires evaluating the dual norm $\Omega^*$, 
as explained in Algorithm~\ref{algo:dual_norm}. 
We sum up the computation of the duality gap in~Algorithm~\ref{alg:fista}.
Moreover, we refer to the proximal operator associated with $\lambda \Omega$ as
$\text{prox}_{\lambda\Omega}$.\footnote{As a brief reminder, it is defined as the
function that maps the vector~$\u$ in~$\R{p}$ to the (unique, by strong
convexity) solution of Eq.~(\ref{eq:prox_problem}).}
\begin{algorithm}[hbtp]
\caption{FISTA procedure to solve problem~(\ref{eq:formulation_with_X}).}\label{alg:fista}
\begin{algorithmic}[1]
\STATE \textbf{Inputs}: initial $\w_{(0)} \in \R{p}$, $\Omega$, $\lambda>0$, $\varepsilon_{\text{gap}}>0$ (precision for the duality gap).
\STATE \textbf{Parameters}: $\nu > 1$, $L_{0} > 0$.
\STATE \textbf{Outputs}: solution $\w$.
\STATE \textbf{Initialization}: $\y_{(1)}=\w_{(0)}$, $t_1=1$, $k=1$.
\WHILE{ $\big\{$ \texttt{computeDualityGap}$\big(\w_{(k-1)}\big) > \varepsilon_{\text{gap}} \big\}$ }
\STATE Find the smallest integer $s_k\! \geq\! 0$ such that

\STATE $\quad f(\text{prox}_{[\lambda\Omega]}(\y_{(k)})) \leq f(\y_{(k)}) + \Delta_{(k)}^\top \nabla f(\y_{(k)}) + \frac{\tilde{L}}{2} \| \Delta_{(k)} \|_2^2,$
\STATE $\quad$with $\tilde{L}\defin L_{k}\nu^{s_k}$ and $\Delta_{(k)}\defin \y_{(k)}\!-\!\text{prox}_{[\lambda\Omega]}(\y_{(k)})$.
\STATE $L_{k}    \leftarrow  L_{k-1} \nu^{s_k}$.
\STATE $\w_{(k)}  \leftarrow \text{prox}_{[\lambda\Omega]}(\y_{(k)})$.
\STATE $t_{k+1}    \leftarrow (1+\sqrt{1+t_k^2})/2$.
\STATE $\y_{(k+1)} \leftarrow \w_{(k)} + \frac{t_k-1}{t_{k+1}} (\w_{(k)} - \w_{(k-1)})$.
\STATE $k \leftarrow k + 1$.
\ENDWHILE
\STATE \textbf{Return}: $\w \leftarrow  \w_{(k-1)}$.
\end{algorithmic}
\vspace*{0.2cm}
{\bf Procedure} \texttt{computeDualityGap}($\w$)
\begin{algorithmic}[1]
 \STATE $\kappab \leftarrow  -\rho^{-1} \nabla\! f(\X\w),\ \text{with}\ \rho\defin \max\big\{\lambda^{-1}\Omega^*(\X^\top\! \nabla\! f(\X\w)),1\big\}$.
 \STATE \textbf{Return}: $f(\X\w)+ \lambda \Omega(\w) + f^*(-\kappab)$.
\end{algorithmic}
\end{algorithm}

In our experiment, we choose the line-search parameter $\nu$ to be equal to $1.5$.
\section{Speed comparison of Algorithm \ref{algo:prox} with parametric max-flow algorithms}\label{appendix:exp}
\label{sec:speed_comp}
As shown by~\citet{hochbaum}, min-cost flow problems, and in particular, the
dual problem of~(\ref{eq:prox_problem}), can be reduced to a specific
\emph{parametric max-flow} problem.  We thus compare our approach (ProxFlow)
with the efficient parametric max-flow algorithm proposed by \citet{gallo} and
a simplified version of the latter proposed by~\citet{babenko}.  We refer to
these two algorithms as GGT and SIMP respectively.  The benchmark is
established on the same datasets as those already used in the experimental
section of the paper, namely: (1) three datasets built from overcomplete bases
of discrete cosine transforms (DCT), with respectively $10^4,\ 10^5$ and $10^6$
variables, and (2) images used for the background subtraction task, composed of
57600 pixels.  For GGT and SIMP, we use the \texttt{paraF} software which is a
\texttt{C++} parametric max-flow implementation available at
\texttt{http://www.avglab.com/andrew/soft.html}. Experiments were conducted on
a single-core 2.33 Ghz.
We report in the following table the average execution time in seconds of each
algorithm for $5$ runs, as well as the statistics of the corresponding problems:\\
\begin{center}
\begin{tabular}{|c||c|c|c|c|}
\hline
Number of variables $p$ & $10\,000$ & $100\,000$ & $1\,000\,000$ & $57\,600$ \\
\hline\hline
$|V|$ & $20\,000$ & $200\,000$ & $2\,000\,000$ & $57\,600$ \\
\hline
$|E|$ & $110\,000$ & $500\,000$ & $11\,000\,000$ & $579\,632$ \\ 
\hline
ProxFlow (in sec.)  & $\mathbf{0.4}$ & $\mathbf{3.1}$ & $\mathbf{113.0}$ & $\mathbf{1.7}$ \\
\hline
GGT (in sec.) & $2.4$ & $26.0$ & $525.0$ & $16.7$ \\
 \hline
SIMP (in sec.) & $1.2$ & $13.1$ & $284.0$ & $8.31$ \\ 
\hline
\end{tabular} 
\end{center}
Although we provide the speed comparison for a single value of $\lambda$ (the
one used in the corresponding experiments of the paper), we observed that our
approach consistently outperforms GGT and SIMP for values of $\lambda$
corresponding to different regularization regimes.

\bibliography{mairal11a}

\end{document}